\documentclass[a4paper,11pt,reqno]{amsart}
\usepackage[utf8]{inputenc}
\usepackage[T1]{fontenc}

\usepackage{paralist}
\usepackage{letterspace}
\usepackage{booktabs}
\usepackage{ifdraft}
\usepackage{cite}
\usepackage{microtype}
\usepackage{tikz,tikz-cd}
\usepackage{lmodern}
\ifdraft{\usepackage[notcite,notref]{showkeys}}{}

\usepackage{todonotes}

\usepackage{amsmath,amssymb,amsthm}
\usepackage{mathtools,mathrsfs}
\usepackage{dsfont}
\usepackage{stmaryrd}
\usepackage{geometry}

\usepackage[hidelinks]{hyperref}

%%%%%% Shortcuts %%%%%

\newcommand{\IN}{\mathbb{N}}
\newcommand{\IR}{\mathbb{R}}
\newcommand{\Z}{\mathbb{Z}}
\newcommand{\bP}{\mathbf{P}}
\newcommand{\bE}{\mathbf{E}}
\newcommand{\bVar}{\mathbf{Var}}
\newcommand{\bCov}{\mathbf{Cov}}
\newcommand{\kB}{\mathfrak{B}}

\newcommand{\one}{\mathds{1}}

\newcommand{\Br}{\mathsf{Br}}
\newcommand{\Slr}{\mathsf{Sl}^+}
\newcommand{\Soo}{\mathsf{S}_\infty}
\newcommand{\Cl}{\mathsf{Cl}}
\newcommand{\Geo}{\mathsf{Geo}}
\newcommand{\Ber}{\mathsf{Ber}}
\newcommand{\shape}{\mathscr{S}}
\newcommand{\F}{\mathscr{F}}

\newcommand{\fdf}[1]{(\!(#1)\!)}

\newcommand{\mb}{\mathbf}

\newcommand{\ssq}{\subseteq}
\newcommand{\str}{\operatorname{str}}

%%%%% Theorems %%%%%%

\newtheorem{lemma}{Lemma}
\newtheorem{theorem}[lemma]{Theorem}
\theoremstyle{definition}
\newtheorem{definition}[lemma]{Definition}
\newtheorem{remark}[lemma]{Remark}

%%%%% Opening %%%%%
\title{Concatenating random matchings}
\author[F. Burghart]{Fabian Burghart}
\address{Department of Mathematics and Computer Science, Eindhoven University of Technology, 5612AE Eindhoven, The Netherlands}
\email{f.burghart@tue.nl}
\author[P. Th\'{e}venin]{Paul Th\'{e}venin}
\address{LAREMA, University of Angers, 49000 Angers, France}
\email{paul.thevenin@univie.ac.at}
\date{29 October 2024}
\keywords{Random matching; Brauer diagram}
\subjclass[2020]{60C05 (Primary); 05C30 (Secondary)}

\begin{document}

\begin{abstract}
We consider the concatenation of $t$ uniformly random perfect matchings on $2n$ vertices, where the operation of concatenation is inspired by the multiplication of generators of the Brauer algebra $\mathfrak{B}_n(\delta)$. For the resulting random string diagram $\mathsf{Br}_n(t)$, we observe a giant component if and only if $n$ is odd, and as $t\to\infty$ we obtain asymptotic results concerning the number of loops, the size of the giant component, and the number of loops of a given shape. Moreover, we give a local description of the giant component. These results mainly rely on the use of renewal theory and the coding of connected components of $\mathsf{Br}_n(t)$ by random vertex-exploration processes.
\end{abstract}

\maketitle

%\tableofcontents

%%%%%%%%%%%%%%%%%%%%%%

% \paul{General structural remarks
% - put the def of the model and the main thms in the intro
% - put the main thms asap, ie before Lemma 1
% - put the Notation section at the end of the intro
% 
% General def remarks
% 
% - work directly with the modified Brauer diagram (and give it a name)
% - number the points at level $t$ ($x_{i,t}, i \leq n$?) 
% - emphasize the explorations, as it is the main tool everywhere. Define it 'independently', as $Explo(v,A,B)$, where $v$ starting point, $A$ set of vertices to reach, $B$ set of vertices to avoid?
% }

\section{Introduction and preliminaries}

\subsection*{Brauer algebras}

The Brauer algebra $\kB_n(\delta)$ is an associative algebra named after Richard Brauer who introduced it in his work \cite{Bra37} on the representation theory of the orthogonal group. It is a $\Z[\delta]$-algebra depending on a parameter $n$, where we can think of $\delta$ as an indeterminate. While it is possible to describe $\kB_n(\delta)$ in terms of generators and relations, the following diagrammatic approach is perhaps more accessible: enumerating $2n$ points by $X_1, \dots, X_n, Y_1, \dots, Y_n$, a basis of the algebra $\kB_n(\delta)$ is given by all perfect matchings of these points -- that is, partitions of $\{ X_1, \dots, X_n, Y_1, \dots, Y_n\}$ in $n$ sets of size two. In the paper, we will always draw $X_1, \dots, X_n$ vertically in this order, and $Y_1, \dots, Y_n$ on their right, see Figure~\ref{fig:small_brauer}, left. 

To explain the multiplication of two basis elements $A, B$, assume that $A$ is given as a matching on vertices $X_1, \dots, X_n, Y_1, \dots, Y_n$ as before, and that $B$ is given as a matching on vertices $Y_1, \dots, Y_n, Z_1, \dots, Z_n$. The product $A \cdot B$ can be interpreted in a nice graphical way: it is the matching on the vertices $X_1,\dots,X_n, Z_1,\dots,Z_n$ obtained by concatenation of $A$ and $B$, multiplied by a factor $\delta^r$ where $r$ is the number of closed loops  appearing in the concatenation, see Figure~\ref{fig:small_brauer}. The definition of the multiplication operation is then extended $\Z[\delta]$-bilinearly to all of $\kB_n(\delta)$. 

\begin{figure}
\centering
 \includegraphics[width=\textwidth]{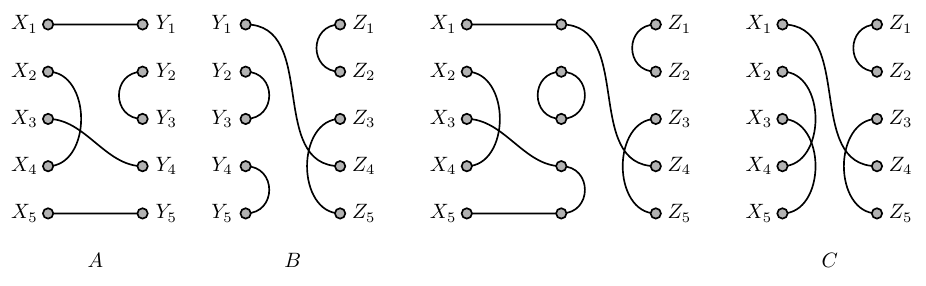}
 \caption{Two basis elements $A,B \in \kB_n(\delta)$. We have $A \cdot B = \delta C$.} 
 \label{fig:small_brauer}
\end{figure}

The Brauer algebra is one instance of a so-called diagram algebra, whose joint feature it is that the multiplication is defined through the concatenation of suitable diagrammatic generators. These diagram algebras have become a popular object of study in representation theory, algebraic topology, and theoretical physics. We briefly mention some other examples of diagram algebras: allowing only non-crossing matchings leads to the Temperley-Lieb algebra \cite{TL71}, whereas requiring that all edges have one endpoint in $\{X_1,\dots,X_n\}$ and the other in $\{Y_1,\dots,Y_n\}$ allows us to interpret matchings as permutations in the symmetric group $\mathfrak{S}_n$ ; in that case, since no closed loop is ever created by concatenating elements, the $\Z$-algebra of formal linear combinations of such diagrams is exactly the group ring $\Z\mathfrak{S}_n$. This means that one can regard the concatenation of matchings in the Brauer setting as a generalization of the multiplication of permutations, as was argued for instance in \cite{KM06}; the new feature gained by this extended generality is the non-trivial component structure in the diagrammatic representation, which we investigate in this paper. On the other hand, loosening the requirement of a matching to allowing any partition of $2n$ points leads to the partition algebras, see \cite{Ma94}. 

Introducing a slightly different spatial structure, where all strings have to go from left to right, but over- and under-crossings are distinguished, leads to braidings, and the multiplication of braidings by concatenation of diagrams up to isotopy defines a group structure, the so-called braid group \cite{Art25}. Random braidings specifically were considered in \cite{GGM13} and \cite{GT14}. 

By instead arranging the $2n$ points on the $x$-axis and concatenating two matchings whose edges are contained in the upper (resp. lower) half-plane without crossings, one obtains a meandric system \cite{FGG97}. The study of uniform random meandric systems has received some recent attention, see \cite{Kar20,FT22,JT23,BGP22}.

We emphasize that the primary interest of specifically considering unconditioned matchings is that their nice combinatorial structure allows for exact computations of all moments of the quantities if interest, as we will see.

\subsection*{Brauer diagrams}

Two positive integers $n,t$ being fixed, we call a graph obtained by the concatenation of $t$ perfect matchings of $2n$ vertices (keeping all vertices and edges) a \emph{Brauer diagram of length $t$}. We denote the $n(t+1)$ vertices of this graph by $(x_{i,j}, 0 \leq i \leq t, 1 \leq j \leq n)$ which we will assume to be arranged in a grid with $t+1$ columns of $n$ vertices each. For all $0 \leq i \leq t$, the set $\mathbf{X}_i := \{ x_{i,j}, 1 \leq j \leq n\}$ is called the $i$-th \emph{level} of the Brauer diagram. Within each level $i$, the vertices $x_{i,1},\dots,x_{i,n}$ are labelled from top to bottom (see Figure \ref{fig:Br57}). The matchings between neighbouring levels will also be called \emph{layers}, and we enumerate them from left to right by $1,\dots,t$. In other words, a Brauer diagram of length $t$ is given by $t$ layers between $t+1$ levels, where the $i$-th layer is a perfect matching on the vertex set $\mathbf{X}_{i-1} \cup \mathbf{X}_i$. See Figure~\ref{fig:Br57} for an example of a Brauer diagram of length $7$.

In the algebraic context of the Brauer algebra, Brauer diagrams of length $t$ form a basis for the $t$-fold tensor product $\kB_n(\delta)\otimes \dots\otimes \kB_n(\delta)$, where the tensor product is taken over $\Z[\delta]$. In particular, the differential maps in the bar complex (see e.g. \cite[Chapter IX.6]{CE56}) 
\[
 \dots \longrightarrow \kB_n(\delta)\otimes \kB_n(\delta) \otimes \kB_n(\delta) 
 \longrightarrow \kB_n(\delta) \otimes \kB_n(\delta)
 \longrightarrow \kB_n(\delta)
 \longrightarrow 0
\]
are given on basis elements $A_1,\dots,A_{t+1}$ by 
\[
 d(A_1\otimes \dots \otimes A_{t+1})=\sum_{i=1}^t (-1)^i A_1\otimes \dots \otimes A_i\cdot A_{i+1} \otimes \dots \otimes A_{t+1}
\]
where the summands on the right-hand side can be interpreted as arising from Brauer diagrams of length $t+1$ by evaluating the product of two consecutive layers. This construction can be used for the explicit computation of the homology of the Brauer algebras, see \cite{BHP21} -- however, due to the necessity of evaluating the chain maps on $(2n-1)!!^t$ basis elements, these computations quickly become unfeasible. Understanding the component structure of long typical Brauer diagrams is therefore one of the key motivations for this paper, in which we will investigate Brauer diagrams obtained from concatenating random matchings..

\subsection*{Random Brauer diagrams}

For $n, t \geq 1$, consider a uniform random Brauer diagram of length $t$. We are mainly interested in the asymptotic behaviour of this random variable at $n$ fixed, as the number $t$ of matchings grows. Observe that, by definition, this uniform Brauer diagram is obtained by concatenating $t$ i.i.d. matchings on $2n$ vertices (or, alternatively, it is a uniformly chosen basis element of $\kB_n(\delta)^{\otimes t}$). Hence, it is natural to consider the coupling process $\Br_n := (\Br_n(s), s \geq 1)$ constructed as follows: let $(\Pi_u, u \geq 1)$ be a family of i.i.d. uniform matchings on $2n$ points. For all $s \geq 1$, we define $\mathsf{Br}_n(s)$ as the Brauer diagram obtained by concatenation of $\{ \Pi_u, 1 \leq u \leq s \}$. It is clear by definition that, for all $s \geq 1$, $\mathsf{Br}_n(s)$ is a uniform Brauer diagram of size $t$. Unless specified, we will always work under this natural coupling. See Figure~\ref{fig:Br57} for a sampling of $\Br_5(7)$. 

%\paul{It's hard to see the difference between the two fonts, $\Br$ and $Br$. I have two alternative solutions, either find more different fonts, or just consider that we will always work under the natural coupling.}
%\fabian{I don't like $Br$ either, tbf. I think in the first draft, I only ever used the natural coupling, and the above is essentially copied from your draft for the introduction. Feel free to change it as you see fit :) }

\begin{figure}[h]
 \centering
 \includegraphics[width=0.9\textwidth]{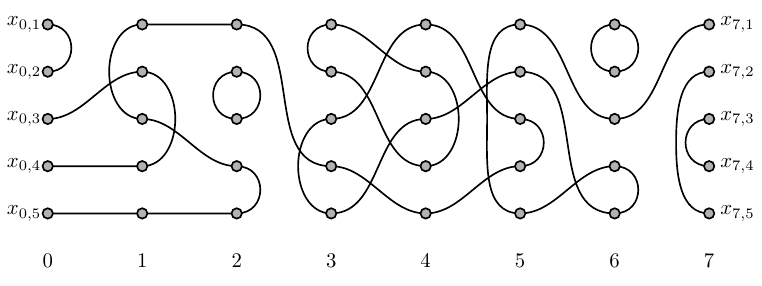}
 \caption{A possible outcome for $\Br_5(7)$, where the levels are indicated by the numbers underneath.}\label{fig:Br57}
\end{figure}

In the paper, the integer $n$ will always be fixed. Our goal is to understand the asymptotic structure of $\mathsf{Br}_n(t)$, as $t \rightarrow \infty$. In particular, we are interested in the connected components of this graph, which are of three different types: 
\begin{itemize}
 \item \emph{Closed loops.} These components form cycles on an even number of vertices. They cannot contain any vertex in levels $0$ or $t$. Note that it is possible to have a closed loop on only 2 vertices. 
 \item \emph{Transverse strings.} These components are paths with one endpoint in level 0 and the other in level $t$. 
 \item \emph{Slings.} These components are paths with either both endpoints in level 0 or both in level $t$.
\end{itemize}
It is easy to see that every closed loop or sling contains an even number of vertices of each level. The Brauer diagram in Figure~\ref{fig:Br57} has a unique transverse string, three closed loops and four slings.

For closed loops we further distinguish between different shapes, in the following ways:

\begin{definition}\label{def:shape}
 Let $K$ be a closed loop on $2\ell$ vertices in $\Br_n(t)$, and let $s$ be the leftmost level that contains vertices in $K$.
 \begin{enumerate}[(i)]
  \item We say that $K$ is of \emph{weak shape} $\mb a := (a_k)_{k\geq 0}$ if for all $k\geq0$, the number of vertices in level $s+k$ is given by $2a_k$.
  \item Define an exploration of $K$ by starting from the topmost vertex of $K$ in level $s$, and following along the edges starting towards the right. Writing down whether the next edge ``goes across'' (denoted by $A$), i.e. connects two vertices of different levels, or ``bends'' (denoted by $B$), i.e. connects two vertices in the same level, produces a word in $\{A,B\}^{\times 2\ell}$. Stop the exploration once it returns to the starting vertex. We say that $K$ is of \emph{strong shape $\shape\in\{A,B\}^{2\ell}$} if the exploration produces the word $\shape$. 
  \item We say that a strong shape $\shape$ is of weak shape $\mb a(\shape)$ if any closed loop $K$ with strong shape $\shape$ has weak shape $\mb a(\shape)$. The map $\mb a$ is well-defined, as will be shown in Lemma~\ref{lemma:automaton} below. 
  \item We define the \emph{stretch} of a sequence $\mb a$ to be $\str(\mb a)= \sup\{k\geq 0:a_k>0\}$. By a small abuse of language, we will talk about the stretch of a strong shape $\shape$ to denote the stretch of the associated weak shape $\mb a(\shape)$.
 \end{enumerate}
\end{definition}
Observe that a non-zero sequence $\mb a$ of non-negative integers can occur as the weak shape of a closed loop in $\Br_n(t)$ if and only if the following three conditions hold: (a) $a_k\leq \frac{n}{2}$ for all $k\geq 0$, (b) $a_k=0$ implies $a_{k+1}=0$ for all $k\geq 0$, and (c) $\str(\mb a) < \infty$.

\subsection*{Structure of the paper}
Before stating our main results in Section~\ref{sec:results}, we fix some notation below. Section~\ref{sec:RT} introduces the renewal theoretic setup for the proofs.  The main results concern properties of transverse strings, with proofs in Section~\ref{sec:proofsII}; limit laws for the total number of closed loops, with proofs in Section~\ref{sec:proofsI}; and limit laws for loops with given shape, with proofs in Section~\ref{sec:proofsIII}.

\subsection*{Notation}
Throughout the paper, we denote by $\IN$ and $\IN_0$ the positive and nonnegative integers, respectively. Let $n\in\IN$. We denote by $n!!$ the double factorial of $n$, that is,
\[
 n!! = \begin{cases} n(n-2)\cdots4\cdot2 & \text{ if $n$ is even}\\ n(n-2)\cdots3\cdot1 & \text{ if $n$ is odd.} \end{cases}
\]
Observe that $(2n-1)!!$ gives the number of distinct perfect matchings on $2n$ vertices. Since all matchings considered in this paper will be perfect matchings on $2n$ vertices, we will drop the attribute ``perfect'' for brevity. 

Furthermore, for two integers $n,k\geq 0$, we write
\[
 (n)_k=n(n-1)\cdots(n-k+1)
\]
for the $k$-th falling factorial of $n$. Note that $(n)_n=n!$, $(n)_k=0$ for $k>n$. For convenience, we set $(n)_0=1$. It will also be convenient to introduce the $k$-th falling double factorial of $n$, which we will denote by 
\[
 \fdf{n}_k=n(n-2)\cdots(n-2k+2).
\]

For any $n$, throughout the paper, we set $m:=\lfloor \frac{n}{2} \rfloor$, so that $n=2m$ if $n$ is even and $n=2m+1$ if $n$ is odd.

For a random variable $X$ and $1\leq r<\infty$, we denote the $L^r$-norm of $X$ by $\|X\|_r :=\bE[|X|^r]^{1/r}$. We denote the geometric distribution with success probability $p$ by $\Geo(p)$, that is, if $X \sim \Geo(p)$, then $\bP (X=k) = p (1-p)^{k-1}$ for all $k \geq 1$.

\subsection*{Main mathematical tools}

The model of random Brauer diagrams exhibits two main features which make its study very rich and interesting. 
First, the model has a very symmetric combinatorial structure. Therefore, many proofs throughout the paper are based on designing exploration processes, which are more involved variants of the following intuitive result. Fix $i \geq 1$, and explore the Brauer diagram starting from $x_{i-1,1}$ until hitting $\mathbf{X}_i$. Then, conditionally on $\Br_n(i-1)$, the level $\mathbf{X}_i$ is hit by the exploration at a uniform point.
This property allows us to compute exact probabilities of events such as a sling of $\mathbf{X}_{i-1}$ being part of a closed loop in $\Br_n(i)$. Computing probabilities of similar events then boils down to finding the right exploration process to consider. 
Second, the ``linear'' structure of long Brauer diagrams, together with certain recurring features which we call resets, allow for the use of renewal theory, which is an extremely powerful tool to prove central limit theorems and almost sure convergences, and could also be useful to investigate similar, more involved models inspired e.g. by random braids or non-crossing matchings. 
Unlike other similar models, the structure of Brauer diagrams allows us to obtain exact expressions of the quantities at stake, and not only their asymptotic behaviour.

%%%%%%%%%%%%%%%%%%%%%%
\section{Main Results}
\label{sec:results}

In what follows, the integer $n$ is always fixed. Our goal is to understand the asymptotic structure of $\Br_n(t)$, as $t \to \infty$. 

We first investigate the behaviour of the transverse strings in $\Br_n(t)$. As we shall see in Lemma~\ref{lemma:tau} below, there is an almost surely finite stopping time $\tau$ with respect to the natural filtration of the process $\Br_n$ such that for all $t\geq\tau$ there is no transverse string in $\Br_n(t)$ if $n$ is even, and there is a unique transverse string in $\Br_n(t)$ if $n$ is odd. Therefore, we consider only the case where $n$ is odd in the next theorem. We obtain the convergence of the proportion of vertices in $\Br_n(t)$ belonging to the transverse string, and characterize the fluctuations of this proportion.

\begin{theorem}\label{theorem:Soo}
 Let $n=2m+1\geq 3$. If $t\geq\tau$, denote by $\Soo(t)$ the transverse string in $\Br_n(t)$, which is uniquely defined by definition of $\tau$, and by $|\Soo(t)|$ its number of vertices. For $t<\tau$, set $\Soo(t)=\emptyset$. 
 Then, as $t\to\infty$, we have 
 \begin{equation}\label{eq:SooLLN}
  \frac{|\Soo(t)|}{n(t+1)} \to \frac{2n+1}{3n},
 \end{equation}
 where convergence holds both almost surely and for the first moment. 

%\paul{Random question, what about other moments?} 
%\fabian{Technically, all moments follow from the CLT and uniform integrability. But since the proof relies on showing $L^1$ first, I thought it's perhaps worth stating it separately? As a side note, Svante pointed out that writing ``... and for the first moment'' is more accurate than ``convergence in $L^1$'' (The two are only equivalent if there is an almost sure limit that is $L^1$ by Scheff\'e's lemma -- granted, this is the case here, but we don't show it). This is also the reason why I had it written this way, and I think we should change it back. Also in Thm 3,4, and 6 below.} 
 Moreover, we have the following central limit theorem for $t\to\infty$:
 \begin{equation}\label{eq:SooCLT}
  \frac{|\Soo(t)|-\frac{2n+1}{3}t}{\sqrt{\frac{4}{135}(n-1)(n+2)(2n+1)t}} \to N(0,1)
 \end{equation}
 where convergence holds in distribution and for all moments. 
\end{theorem}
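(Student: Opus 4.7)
The strategy is to exploit the renewal-theoretic framework developed in Section~\ref{sec:RT}, which extracts i.i.d.\ regeneration blocks from the layer-by-layer evolution of $(\Br_n(s))_{s\ge 0}$ via an associated positive recurrent Markov chain describing the connectivity pattern at each level.

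First I would decompose $|\Soo(t)|=\sum_{i=0}^{t}v_i$, where $v_i$ denotes the number of transverse-string vertices at level $i$. Each $v_i$ is a bounded functional of the joint left--right state at level $i$: namely, of the two partitions of $\mathbf{X}_i$ induced respectively by the left matchings $\Pi_1,\dots,\Pi_i$ and by the right matchings $\Pi_{i+1},\dots,\Pi_t$, each of which sorts the level-$i$ vertices into endpoints of paths reaching the boundary and endpoints of arcs returning to level $i$. Since these two partitions are functions of disjoint matchings they are independent, and for $i$ and $t-i$ both large each of them lies in the stationary regime of the associated Markov chain, with some distribution $\pi$. By exchangeability of the $n$ vertices at level $i$, $\bE[v_i]/n$ tends to the probability under $\pi\otimes \pi$ that a fixed vertex lies on the transverse string, and a direct counting argument using the stationary distribution identifies this limit as $(2n+1)/(3n)$; equivalently, $\bE[v_i]\to (2n+1)/3$.

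The almost sure statement in \eqref{eq:SooLLN} then follows from the strong LLN for the i.i.d.\ block decomposition of Section~\ref{sec:RT}: if $\rho_1<\rho_2<\cdots$ are the regeneration times and $V_k$, $L_k=\rho_k-\rho_{k-1}$ denote the number of transverse-string vertices in and the length of the $k$-th block, then $|\Soo(t)|/t\to \bE[V_1]/\bE[L_1]$ almost surely; consistency with the stationary mean forces this limit to be $(2n+1)/3$. Convergence in $L^1$ is immediate from the deterministic bound $|\Soo(t)|\le n(t+1)$ via dominated convergence. For \eqref{eq:SooCLT} I would invoke the renewal CLT (equivalently, the CLT for additive functionals of a positive recurrent finite Markov chain): the spectral gap of the irreducible finite chain yields exponentially decaying stationary covariances $\bCov_\pi(v_0,v_k)$, so that
\[
 \frac{|\Soo(t)|-\tfrac{2n+1}{3}t}{\sqrt{t}}\xrightarrow{d}N(0,\sigma^2),\qquad \sigma^2=\bVar_\pi(v_0)+2\sum_{k\ge 1}\bCov_\pi(v_0,v_k),
\]
and identifying $\sigma^2=\tfrac{4}{135}(n-1)(n+2)(2n+1)$ is then a matter of a direct second-moment computation in the stationary regime. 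Convergence of all moments follows from the exponential tails of the block lengths $L_k$, which provide uniform integrability of arbitrary polynomial powers of the centered sum.

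The main obstacle lies in the explicit computation of the two constants: both require a closed-form grip on at least the first two moments under the stationary distribution $\pi$, and the variance additionally requires evaluating the telescoping series of long-range covariances. The clean factorized form $\frac{4}{135}(n-1)(n+2)(2n+1)$, which vanishes at $n=1$ as expected, strongly suggests that such closed-form expressions are accessible, most likely via Wick-type combinatorial identities or generating-function manipulations tailored to the uniform random matching on $2n$ vertices.
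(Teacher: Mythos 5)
Your framework is essentially the paper's: regeneration over reset times of the sling/connectivity chain, a law of large numbers via the per-level stationary mean (indeed the paper notes that computing the stationary distribution of $V_t$, as in Theorem~\ref{theorem:Soolocal}(i), gives an alternative proof of \eqref{eq:SooLLN}), and a regenerative/stationary CLT with variance $\bVar_\pi(v_0)+2\sum_{k\ge1}\bCov_\pi(v_0,v_k)$, plus uniform integrability from geometric block lengths for moment convergence. However, there is a genuine gap: the explicit constants, which are the actual content of the theorem, are nowhere computed. You assert that the limit $(2n+1)/(3n)$ follows from "a direct counting argument using the stationary distribution" and that $\sigma^2=\frac{4}{135}(n-1)(n+2)(2n+1)$ is "a matter of a direct second-moment computation," but these computations are precisely the hard part. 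The paper has to develop the sling-fate probabilities (Lemma~\ref{lemma:SlFate}: a fixed sling joins the transverse string with probability $\frac{2}{n+2}$ and closes to a loop with probability $\frac{1}{n+2}$, whence $\bP_t[S\ssq\Soo'(\infty)]=\frac{2}{3}$), an eight-case pairwise table (Lemma~\ref{lemma:Slfate2}, giving $\bP_t[S_1,S_2\ssq\Soo'(\infty)]=\frac{8}{15}$), and a two-point level computation (Lemma~\ref{lemma:SooVar}) whose $O(1/t)$ term encodes exactly the covariance series; only then, via the uniform-integrability Lemma~\ref{lemma:SooUI}, can the unknown variance in the regenerative CLT be identified -- the paper explicitly remarks that evaluating that variance directly from the renewal formula "seems unfeasible." Without carrying out some equivalent of these computations, your proposal establishes at best a CLT with an unidentified $\sigma^2>0$ and an unidentified LLN constant.

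A secondary, fixable issue: $v_i$ is not an additive functional of a forward Markov chain, since whether a level-$i$ vertex lies on the transverse string depends on all layers to the right of level $i$ (it is not adapted to the natural filtration). So the "CLT for additive functionals of a positive recurrent finite chain" does not apply verbatim; you need either a CLT for functionals of the two-sided stationary environment or, as the paper does, a replacement of $\Soo(t)$ by the non-adapted but regenerative proxy $\Soo''(t)$ (vertices destined for the transverse string), together with the modified diagram $\Br'_n$ to guarantee a unique transverse string from time $0$, and a check that the substitution changes $|\Soo(t)|$ only by $O_P(1)$ geometric-tailed terms. Your sketch does not address this adaptedness point, and the stationarity claim "for $i$ and $t-i$ both large" would also need an explicit boundary-effect estimate to conclude the CLT normalisation at scale $\sqrt{t}$.
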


Defining $\Br_n(\infty)=\bigcup_{t\in\IN} \Br_n(t)$ to be the semi-infinite random Brauer diagram, we set $\Soo(\infty)$ to be the corresponding transverse string in $\Br_n(\infty)$ if $n$ is odd -- meaning that $\Soo(\infty)$ is the component of $\Br_n(\infty)$ that intersects every level. As before, existence and uniqueness of $\Soo(\infty)$ are guaranteed by Lemma~\ref{lemma:tau} below. We write $E(\Soo(\infty))$ for the edge set of $\Soo(\infty)$. We can even compute exact probabilities of local events, and thus characterise the local structure of $\Soo(\infty)$.

\begin{theorem}\label{theorem:Soolocal}
 Let $n=2m+1\geq 1$.
 \begin{enumerate}[(i)]
  \item Define $V_t=\left|\Soo(\infty)\cap \mathbf{X}_t\right|$ to be the number of vertices on $\Soo(\infty)$ at level $t$. Note that $V_t$ is necessarily odd. Then
  \begin{equation}\label{eq:Vdist}
   \bP[V_t=2j+1\mid  t \geq \tau]=\frac{\fdf{2m}_j}{\fdf{2m+1}_{j+1}}
  \end{equation}
  for $\ell=0,\dots,m$.
  \item Define $E_t = \left|\left\{e\in E(\Soo(\infty)): e\cap \mathbf{X}_t\neq \emptyset \neq e\cap \mathbf{X}_{t+1}\right\}\right|$ 
  to be the number of edges of $\Soo(\infty)$ in layer $t$ that connect levels $t$ and $t+1$. Note that $E_t$ is necessarily odd. Then
  \begin{equation}\label{eq:Edist}
   \bP[E_t=2\ell+1\mid  t \geq \tau]=\sum_{j=\ell}^m \sum_{k=\ell}^m \binom{j}{\ell}\binom{k}{\ell}\frac{\fdf{2m}_j \fdf{2m}_k}{\fdf{4m+1}_{j+k+1}}
  \end{equation}
  for $\ell=0,\dots,m$.
  \item Denote by $A_t$ the number of edges of $\Soo(t)$ that ``go across'' layers (in the sense of Definition~\ref{def:shape}) and by $B_t$ analogously the number of edges of $\Soo(t)$ that ``bend'', with the convention that $A_t=B_t=0$ if $t < \tau$. Then, as $t\to\infty$, we have 
  \begin{equation}\label{eq:ABLLN}
   \frac{1}{t}(A_t,B_t) \to \left(1+\frac{8m^2}{12m+3},\frac{8m^2+4m}{12m+3}\right)
  \end{equation}
  where convergence holds both almost surely and for the first moment.
 \end{enumerate}
\end{theorem}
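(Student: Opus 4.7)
The plan is to describe the local structure of $\Soo(\infty)$ at and across a single level by means of three independent random partitions/matchings coming from the past, the immediately following layer and the future, and then to leverage this local description to derive the law of large numbers in part~(iii).

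Fix $t\ge\tau$ and let $\mathcal{P}_t$ denote the partition of $\mathbf{X}_t$ whose parts are the intersections of $\mathbf{X}_t$ with the components of $\Br_n(t)$, and $\mathcal{F}_t$ the analogous partition coming from the right semi-infinite diagram $\bigcup_{s>t}\Pi_s$. Because $\Br_n(t)$ consists of the transverse string $\Soo(t)$ (contributing one endpoint on $\mathbf{X}_t$) together with $m$ right slings (contributing a pair each), $\mathcal{P}_t$ has the profile ``one singleton $s_P$ plus $m$ pairs''; by time reversal, so does $\mathcal{F}_t$ (with singleton $s_F$). The $\mathfrak{S}_n$-invariance of the uniform matchings makes each partition uniform among partitions of this profile, and since $\mathcal{P}_t$ and $\mathcal{F}_t$ depend on disjoint blocks of i.i.d.\ layers they are independent. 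In $\mathcal{P}_t\cup\mathcal{F}_t$ every vertex has degree at most $2$ and only $s_P,s_F$ have degree $1$, so its components are cycles together with a unique $s_P$--$s_F$ path, which by construction is exactly $\Soo(\infty)\cap\mathbf{X}_t$. For part~(i) one then sums over the $(n)_{2j+1}$ ordered tuples $(v_0,\dots,v_{2j})$ of distinct vertices with $v_0=s_P$ and $v_{2j}=s_F$; each such tuple is realised with probability $((2m-2j-1)!!)^2/(n^2((2m-1)!!)^2)$ by independence, and the factorisations $(n)_{2j+1}=\fdf{2m+1}_{j+1}\fdf{2m}_j$, $(2m-1)!!/(2m-2j-1)!!=\fdf{2m-1}_j$ and $\fdf{2m+1}_{j+1}=(2m+1)\fdf{2m-1}_j$ collapse everything to \eqref{eq:Vdist}.

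Part~(ii) follows the same template after replacing $(\mathcal{P}_t,\mathcal{F}_t)$ by the independent triple $(\mathcal{P}_t,\Pi_{t+1},\mathcal{F}_{t+1})$; the unique $s_P$--$s_F$ path in $\mathcal{P}_t\cup\Pi_{t+1}\cup\mathcal{F}_{t+1}$ now lies on $\mathbf{X}_t\cup\mathbf{X}_{t+1}$ and is exactly $\Soo(\infty)\cap(\mathbf{X}_t\cup\mathbf{X}_{t+1})$, while $E_t$ counts its cross $\Pi_{t+1}$-edges. Decomposing a path with $E_t=2\ell+1$ into $\ell+1$ segments on each level and parametrising each segment by the number of $\mathcal{P}_t$- (respectively $\mathcal{F}_{t+1}$-) pairs it contains shows that a path using $j$ pairs of $\mathcal{P}_t$ and $k$ pairs of $\mathcal{F}_{t+1}$ admits exactly $\binom{j}{\ell}\binom{k}{\ell}$ such segmentations, so there are $n^2(n-1)_{2j}(n-1)_{2k}\binom{j}{\ell}\binom{k}{\ell}$ ordered configurations. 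Each is realised with probability
\[
 \frac{(2m-2j-1)!!\,(2m-2k-1)!!\,(2n-2j-2k-3)!!}{n^2\,((2m-1)!!)^2\,(2n-1)!!}
\]
by independence, and the identities $(n-1)_{2j}(2m-2j-1)!!=\fdf{2m}_j(2m-1)!!$ and $(2n-1)!!/(2n-2j-2k-3)!!=\fdf{4m+1}_{j+k+1}$ simplify the double sum to \eqref{eq:Edist}.

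For part~(iii) the plan is first to extract $\bE[E_t\mid t\ge\tau]$ in closed form from \eqref{eq:Edist}. Using the identity $\sum_\ell(2\ell+1)\binom{j}{\ell}\binom{k}{\ell}=\binom{j+k}{j}(1+2jk/(j+k))$, the Chu--Vandermonde identity $\sum_j\binom{N}{j}\fdf{2m}_j\fdf{2m}_{N-j}=2^N(2m)_N$ and its variant obtained by inserting the factor $j(N-j)$, together with the ``normalisation'' $\sum_{N\ge 0}2^N(2m)_N/\fdf{4m+1}_{N+1}=1$ (which is \eqref{eq:Edist} summed to one after one Chu--Vandermonde), gives $\bE[E_t\mid t\ge\tau]=1+8m^2/(12m+3)$. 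To pass to $A_t$, I would write $A_t=\sum_{i=1}^{t}E_{i-1}-R_t$, where $R_t$ counts the cross edges lying inside the $(V_t-1)/2$ right slings of $\Br_n(t)$ that $\Soo(\infty)$ eventually absorbs. The renewal theoretic analysis of Section~\ref{sec:RT} will show that the total size of all right slings at a single level has exponentially decaying tails, hence $R_t=o(t)$ almost surely and in $L^1$. The chain $(\mathcal{P}_t)_{t\ge\tau}$ is an ergodic Markov chain on the finite set of partitions with profile ``one singleton plus $m$ pairs'', so the Birkhoff ergodic theorem applied to the integrable functional $E_t$ of the triple $(\mathcal{P}_t,\Pi_{t+1},\mathcal{F}_{t+1})$ gives $\frac{1}{t}\sum_{i=1}^{t}E_{i-1}\to\bE[E_t\mid t\ge\tau]$ almost surely and in $L^1$. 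The identity $A_t+B_t=|\Soo(t)|-1$ combined with Theorem~\ref{theorem:Soo} then forces the claimed limit $(8m^2+4m)/(12m+3)$ for $B_t/t$, yielding \eqref{eq:ABLLN}. The hardest step will be the tail estimate on right-sling sizes needed to guarantee $R_t=o(t)$ almost surely; the algebraic reduction of $\bE[E_t\mid t\ge\tau]$ to the advertised closed form is then elementary but lengthy.
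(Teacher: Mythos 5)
Your argument is correct in substance, and in two places it genuinely deviates from the paper. For (i) and (ii) the paper runs a sequential exploration of $\Soo(\infty)$ and uses conditional uniformity of $w_\infty(t)$ and of the successively revealed vertices, whereas you make the same independence static: the past pairing $\mathcal{P}_t$ and the future pairing $\mathcal{F}_t$ (resp.\ the triple $\mathcal{P}_t,\Pi_{t+1},\mathcal{F}_{t+1}$) are independent and, by exchangeability, uniform on the orbit ``one singleton plus $m$ pairs''; you then count path configurations directly. Your double factorial bookkeeping checks out and reproduces \eqref{eq:Vdist} and the joint law \eqref{eq:Soolocalproof}, including the $\binom{j}{\ell}\binom{k}{\ell}$ factor, which rests on the same parity observation as the paper's ``merging'' argument (crossings out of level $t$ only after odd-indexed $v$'s, the last one forced); just state the uniformity claims conditionally on $\{t\geq\tau\}$, which is legitimate since that event is label-invariant and measurable with respect to the first $t$ layers. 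The real divergence is in (iii): the paper explicitly says it could \emph{not} evaluate $\bE[E_t\mid t\geq\tau]$ from \eqref{eq:Edist} and instead computes the propagation probability of the $k$-th sling and telescopes, whereas you extract the closed form from \eqref{eq:Edist} by Vandermonde-type identities. Your identities are correct, and the reduction does terminate: after your steps one is left with $1+2m^2\sum_{N\geq 2}2^{N}(N-1)(2m-2)_{N-2}/\fdf{4m+1}_{N+1}$, and to evaluate this you need, beyond the three identities you list, the first-moment identity $\sum_N N\,2^N(2m)_N/\fdf{4m+1}_{N+1}=\tfrac{4m}{3}$ (equivalently $\bE[V_t\mid t\geq\tau]=\tfrac{2n+1}{3}$, which part (i) supplies); with it the sum collapses to $8m^2/(12m+3)$, as I checked both structurally and for small $m$. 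Two further points to shore up. First, your Birkhoff step: $E_s$ is a functional of $(\mathcal{P}_s,\Pi_{s+1},\mathcal{F}_{s+1})$, and $\mathcal{F}_{s+1}$ looks infinitely far into the future, so ergodicity of the finite chain $(\mathcal{P}_s)$ alone is not literally enough; you need the standard remark that if $(\mathcal{P}_s,\Pi_{s+1})_s$ is stationary ergodic (it is: a stationary irreducible aperiodic finite chain with i.i.d.\ innovations), then so is the sequence of its futures, so Birkhoff applies to $E_s=g\big(\mathcal{P}_s,\Pi_{s+1},\Pi_{s+2},\dots\big)$; alternatively you could simply reuse the paper's regenerative-increments framework, which is already in place. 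Second, your correction term $R_t$ (cross edges of absorbed right slings) is bounded by $n$ times the time since the last reset, so the geometric tail bound and hence $R_t=o(t)$ a.s.\ and in $L^1$ is routine rather than the ``hardest step'' — and handling it explicitly is actually slightly more careful than the paper, which writes $A_t=\sum_{s\leq t-1}E_s$ without comment. What the paper's route buys is a short exact computation avoiding all hypergeometric manipulation; what yours buys is a self-contained derivation of (iii) from the distributional formula (ii), answering in passing the summation the authors left unevaluated.
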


Our next result concerns the number of connected components in $\Br_n(t)$. It turns out that this number again behaves linearly in $t$ and has Gaussian fluctuations. To ease notation, we will write 
\[
 \mu_n=\sum_{j=0}^{\lfloor n/2\rfloor-1} \frac{1}{2n-2j-1}=
       \begin{cases}
        \sum_{k=1}^m \frac{1}{n+2k-1} & \text{ if } n=2m\\
        \sum_{k=1}^m \frac{1}{n+2k} & \text{ if } n=2m+1
       \end{cases}
\]
and 
\[
 \sigma^2_n=\sum_{j=0}^{\lfloor n/2\rfloor-1} \frac{2n-2j-2}{(2n-2j-1)^2}=
            \begin{cases}
             \sum_{k=1}^m \frac{n+2(k-1)}{(n+2k-1)^2} & \text{ if } n=2m\\
             \sum_{k=1}^m \frac{n+2k-1}{(n+2k)^2} & \text{ if } n=2m+1
            \end{cases}
\]
where the second formulation is the one that will be obtained in the proof.

\begin{theorem}\label{theorem:CC}
 Denote by $C_n(t)$ the number of loops in $\Br_n(t)$. Then, as $t\to\infty$, 
 \begin{equation}\label{eq:CCLLN}
  \frac{C_n(t)}{t} \to \mu_n
 \end{equation}
 holds both almost surely and in for the first moment. Additionally, we have the following central limit theorem for $t\to\infty$:
 \begin{equation}\label{eq:CCCLT}
  \frac{C_n(t)-\mu_n t}{\sigma_n\sqrt{t}} \to N(0,1)
 \end{equation}
 where convergence occurs both in distribution and in all moments. 
\end{theorem}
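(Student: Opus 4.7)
The strategy is to view $C_n(t)=\sum_{s=1}^{t}L_s$, where $L_s$ is the number of loops created by the $s$-th layer, as a functional of the Markov chain $(M_s)_{s\ge0}$ of \emph{partial-matching states}: $M_s$ encodes the partial matching on the vertices of level $s$ induced by concatenating the first $s$ layers (two vertices are paired if a path inside $\Br_n(s)$ joins them, the remaining ``free'' vertices being those joined back to level~$0$). Then $L_s$ is a deterministic function of $(M_{s-1},\Pi_s)$, equal to the number of cycles formed when composing $M_{s-1}$ with $\Pi_s$. By Lemma~\ref{lemma:tau}, for $s\ge\tau$ the number of free vertices is the minimum possible value $n-2m$, and the chain evolves on the finite state space of such partial matchings. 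Since the law of $\Pi_s$ is invariant under the $\mathfrak{S}_n$-action on labels, the dynamics commute with this action, which forces the stationary measure $\pi$ of $(M_s)_{s\ge\tau}$ to be uniform on this space; irreducibility and aperiodicity are straightforward.

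For the law of large numbers~\eqref{eq:CCLLN}, Birkhoff's ergodic theorem yields $t^{-1}C_n(t)\to\bE_\pi[L_1]$ almost surely, with convergence in first moment automatic because $L_s\le m$ deterministically. The identification $\bE_\pi[L_1]=\mu_n$ comes from a vertex-exploration argument: under $\pi$, $(M_{s-1},\Pi_s)$ is uniform, and starting at a distinguished non-free vertex and alternately following $M$- and $\Pi$-edges produces a cycle of length~$2k$ with probability $\prod_{i=1}^{k-1}\frac{2m-2i}{2n-2i+1}\cdot\frac{1}{2n-2k+1}$ (the formula is the same in the even and odd cases, once one accounts for the single free vertex in the odd case via the factor $2m$ of non-free vertices). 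Weighting by $n/(2k)$ and summing over $k$ then collapses to $\mu_n$ via a combinatorial identity.

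For the central limit theorem~\eqref{eq:CCCLT}, the crucial observation is that because the $\mathfrak{S}_n$-action is transitive on partial matchings with $n-2m$ free points and commutes with the dynamics, one has $\bE[L_s\mid M_{s-1}]=\mu_n$ for \emph{every} state $M_{s-1}$. Hence $(L_s-\mu_n)_{s\ge\tau}$ is a bounded martingale-difference sequence with respect to the filtration $\mathcal{F}_{s-1}=\sigma(\Pi_1,\ldots,\Pi_{s-1})$, and the martingale CLT delivers $(C_n(t)-\mu_n t)/\sqrt{t}\Rightarrow\mathcal{N}(0,\sigma_n^2)$ with $\sigma_n^2=\bVar_\pi(L_1)$; the a priori bound $L_s\le m$ combined with Azuma--Hoeffding gives uniform sub-Gaussian tails for $C_n(t)/\sqrt{t}$ and thus convergence of all moments. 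The identification $\sigma_n^2=\sum_{j=0}^{m-1}\tfrac{2n-2j-2}{(2n-2j-1)^2}$ is the most delicate point: one proves that under $\pi$, $L_1$ has the same law as $\sum_{j=0}^{m-1}Y_j$ for \emph{independent} Bernoullis $Y_j\sim\mathrm{Ber}(p_j)$ with $p_j=1/(2n-2j-1)$, reflecting that successive rounds of the cycle-exploration act as independent closure trials; the formula $\sum_j p_j(1-p_j)$ for the variance is then immediate. This independent-Bernoulli decomposition of $L_1$, which simultaneously encodes both $\mu_n$ and $\sigma_n^2$, is the main technical obstacle; establishing it appears to require a bijective decomposition of the combined graph on level~$s$ pairing the $m$ chains of $M_{s-1}$ with the ordered closure rounds of the exploration.
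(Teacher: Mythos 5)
Your overall architecture is sound and, at its core, close to the paper's: the per-layer loop count $L_s$ is a function of the sling/partial-matching state at level $s-1$ and the fresh uniform layer, and symmetry makes its conditional law state-independent, so limit theory for (essentially) i.i.d.\ bounded increments applies. But there is a genuine gap exactly where you flag "the main technical obstacle": the claim that under stationarity $L_1$ is distributed as a sum of \emph{independent} Bernoulli variables with success probabilities $1/(2n-2j-1)$, $j=0,\dots,m-1$ (resp.\ the odd-$n$ analogue), is asserted, not proved, and without it you have no identification of $\sigma_n^2$ -- the martingale CLT and Azuma--Hoeffding only deliver the theorem with the unevaluated constant $\bVar_\pi(L_1)$. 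This decomposition is precisely the content of Lemma~\ref{lemma:CCYdistr} in the paper: order the $m$ slings at level $t$ by their upper endpoints, let $Y_{n,k}$ indicate that the $k$-th sling is the topmost sling of a loop closed by layer $t+1$, and run explorations successively from the bottom sling upwards; the $k$-th exploration terminates uniformly on a set of $n+2k-1$ (even case) or $n+2k$ (odd case) vertices \emph{regardless} of the outcomes of the previous explorations, which gives both $Y_{n,k}\sim\Ber\bigl(\tfrac{1}{n+2k-1}\bigr)$ (resp.\ $\Ber\bigl(\tfrac{1}{n+2k}\bigr)$) and the independence across $k$. Your proposed "bijective decomposition pairing chains with closure rounds" is not supplied, and a single cycle-exploration (as in your $\mu_n$ computation) does not obviously yield independent closure trials, since the trials you describe live inside one cycle rather than indexing the $m$ slings; so as written the variance identification does not go through. (Your $\mu_n$ identification is also only sketched -- the weighting factor should be $2m/(2k)$, i.e.\ $(n-1)/(2k)$ for odd $n$, and the "combinatorial identity" is left unproved -- but that part is routine; the Bernoulli independence is not.)

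A secondary remark: the Markov-chain/ergodic-theorem and martingale-CLT machinery is heavier than needed. The same exchangeability argument you use to get $\bE[L_s\mid M_{s-1}]=\mu_n$ in fact shows that the whole conditional law of $L_s$ given $\F_{s-1}$ is the same for every state with the minimal number of free vertices; hence (after $\tau$, or for all $s$ in the modified diagram $\Br'_n$ of Remark~\ref{rem:modified}) the increments are i.i.d., and the paper concludes directly from classical limit theorems for i.i.d.\ sums, with the pre-$\tau$ segment and the sling/string corrections contributing only an $O_P(1)$ error (which your write-up should also dispose of explicitly, since $\tau>0$ changes the law of the first few increments). Once Lemma~\ref{lemma:CCYdistr}'s independent-Bernoulli structure is established, both $\mu_n$ and $\sigma_n^2$ drop out simultaneously and your remaining steps are correct.
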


\begin{remark}\label{rem:CC}
 Any connected component of $\Br_n(t)$ that is not a closed loop has two endpoints in $\mathbf{X}_{0} \cup \mathbf{X}_{t}$. Conversely, none of the vertices in these two levels can be part of a closed loop in $\Br_n(t)$. It follows that there are exactly $n$ components that are not loops and therefore that the results of Theorem~\ref{theorem:CC} hold (with the same constants $\mu_n$ and $\sigma_n$) if $C_n(t)$ is replaced by the number of connected components in $\Br_n(t)$. 
\end{remark}

% \begin{remark}
%  By writing $\mu_n$ as a linear combination of harmonic numbers and then using their series expansion at $\infty$, one can derive approximations for $\mu_n$, such as 
%  \[
%   \mu_n = \ln\sqrt{2} - \frac{1}{64m^2} + O(m^{-4})
%  \]
%  for $n=2m$, and 
%  \[
%   \mu_n = \ln\sqrt{2} - \frac{1}{4m} + \frac{11}{64m^2} - \frac{7}{64m^3} + O(m^{-4}).
%  \]
%  for $n=2m+1$. By similar methods, one obtains 
%  \[
%   \sigma_n^2 = \ln\sqrt{2} - \frac{1}{8m} - \frac{1}{64m^2} + \frac{7}{384m^3} + O(m^{-4})
%  \]
%  and 
%  \[
%   \sigma_n^2 = \ln\sqrt{2} - \frac{1}{2m} + \frac{35}{64m^2} - \frac{49}{96m^3} + O(m^{-4})
%  \]
%  for $n=2m$ and $n=2m+1$, respectively. 
% \end{remark}
% 
% \paul{do we use it somewhere?}

Finally, our last result refines the previous one and allows us to compute the asymptotic number of loops in $\Br_n(t)$ with a given shape.

\begin{theorem}\label{theorem:shape}
 Let $n\geq 2$, and let $\shape$ be a strong shape for $\Br_n$ with weak shape $\mb a=(a_k)_{k\geq0}$. Set 
 \[
  \gamma_n^{\mb a} = \prod_{i\geq1} \binom{a_i+a_{i-1}-1}{a_i}
 \]
 and define $b_0=a_0$ and $b_i=a_i+a_{i-1}$ for all $i\geq 1$. Denote by $C_n^\shape(t)$  the number of closed loops in $\Br_n(t)$ with strong shape $\shape$, and by $C_n^{\mb a}(t)$ the number of closed loops with weak shape $\mb a$. Then, as $t\to\infty$,
 \begin{equation}\label{eq:sshapeLLN}
  \frac{C_n^\shape(t)}{t} \to \frac{1}{2a_0} \prod_{i\geq0} \frac{(n)_{2a_i}}{\fdf{2n-1}_{b_i}} =: \mu_n^\shape
 \end{equation}
 and
 \begin{equation}\label{eq:wshapeLLN}
  \frac{C_n^{\mb a}(t)}{t} \to \frac{\gamma_n^{\mb a}}{2a_0} \prod_{i\geq0} \frac{(n)_{2a_i}}{\fdf{2n-1}_{b_i}}
 \end{equation}
 both almost surely and for the first moment. Moreover, there exists a positive constant $\sigma_{n,\shape}^2$  such that 
 \begin{equation}\label{eq:sshapeCLT}
  \frac{C_n^\shape(t)-\mu_n^\shape t}{\sigma_{n,\shape}\sqrt{t}} \to N(0,1)
 \end{equation}
 and
 \begin{equation}\label{eq:wshapeCLT}
  \frac{C_n^{\mb a}(t)-\gamma_n^{\mb a}\mu_n^\shape t}{\sqrt{\left((\gamma_n^{\mb a})^2\sigma_{n,\shape}^2 - (\gamma_n^{\mb a})_2\mu_n^\shape\right)t}} \to N(0,1)
 \end{equation}
 as $t\to\infty$, where convergence holds in distribution and for all moments. 
\end{theorem}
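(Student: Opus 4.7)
The plan is to apply the renewal-theoretic framework of Section~\ref{sec:RT}, in which the components of $\Br_n$ decompose into successive regenerative blocks. Counting loops of a given shape then becomes a renewal-reward functional with a Bernoulli-type reward, so \eqref{eq:sshapeLLN}--\eqref{eq:sshapeCLT} should follow from the renewal reward theorem and the renewal CLT, and \eqref{eq:wshapeLLN}--\eqref{eq:wshapeCLT} will be deduced by summing over the $\gamma_n^{\mb a}$ strong shapes compatible with $\mb a$. The two concrete inputs to produce are (a) an expected-count computation giving $\mu_n^\shape$, and (b) a cross-covariance computation between rewards attached to distinct strong shapes of the same weak shape.

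For (a), I would evaluate $\bE[C_n^\shape(t)]$ by linearity of expectation. A potential loop of strong shape $\shape$ with leftmost level $s$ is specified by an ordered assignment of $2a_i$ vertices of level $s+i$ to the visit positions imposed by the exploration of $\shape$. Conditional on such an assignment, the loop is realised if and only if a specific set of $b_i$ edges appears in layer $s+i$ for every $i\geq 0$; by independence of the layers and the standard identity for uniform matchings, this has probability $\prod_{i\geq0} 1/\fdf{2n-1}_{b_i}$. The number of ordered assignments is $\prod_{i\geq0}(n)_{2a_i}$, of which only a fraction $1/(2a_0)$ satisfy the constraint that the exploration starts at the topmost of the $2a_0$ loop-vertices in level $s$. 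Summing over $s\in\{1,\dots,t-\str(\mb a)\}$ yields $\bE[C_n^\shape(t)]=\mu_n^\shape t + O(1)$, which the renewal reward theorem promotes to the almost-sure convergence \eqref{eq:sshapeLLN}. Applying the renewal CLT to the reward $\one_{\{\text{block is a loop of shape }\shape\}}$ then gives \eqref{eq:sshapeCLT}, with $\sigma_{n,\shape}^2>0$ inherited from the non-degeneracy of this indicator together with geometric tail bounds on the block stretch.

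For the weak-shape results, the decomposition $C_n^{\mb a}(t)=\sum_{\shape:\mb a(\shape)=\mb a}C_n^\shape(t)$ combined with the fact that $\mu_n^\shape$ depends only on $\mb a$ yields \eqref{eq:wshapeLLN}. For the variance, the renewal-reward variance formula depends on $\shape$ only through its weak shape, so $\bVar(C_n^\shape(t))$ and $\bCov(C_n^{\shape_1}(t), C_n^{\shape_2}(t))$ are the same for every $\shape_1,\shape_2$ of common weak shape $\mb a$, and
\[
 \bVar(C_n^{\mb a}(t)) = \gamma_n^{\mb a}\,\bVar(C_n^\shape(t)) + (\gamma_n^{\mb a})_2\,\bCov(C_n^{\shape_1}(t), C_n^{\shape_2}(t))
\]
for any two distinct $\shape_1,\shape_2$ with $\mb a(\shape_i)=\mb a$. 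The cross-covariance comes from the bivariate renewal CLT applied to the Bernoulli rewards $(\one_{\{\text{shape}=\shape_1\}},\one_{\{\text{shape}=\shape_2\}})$: since these are mutually exclusive on a single block, their per-block covariance equals $-p_{\shape_1}p_{\shape_2}$, and a short renewal-reward calculation collapses the asymptotics to $\bCov(C_n^{\shape_1}(t), C_n^{\shape_2}(t))/t \to \sigma_{n,\shape}^2-\mu_n^\shape$. Substituting into the display above gives the variance in \eqref{eq:wshapeCLT}, while the Gaussian limit follows from the multivariate renewal CLT and the continuous mapping theorem applied to the sum. The main obstacle I anticipate is pinning down the regenerative structure rigorously: unlike counts of edges or vertices, loops have a spatial extent, so the exploration must be organised so that successive ``blocks'' are genuinely i.i.d.\ (or at least Markov-renewal), and one must control the tails of the block stretches in order to apply the CLT and to justify convergence of all moments.
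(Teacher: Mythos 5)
Your outline coincides with the paper's: the expectation computation in (a) (ordered assignments of $2a_i$ vertices per level, probability $\prod_{i\geq0}\fdf{2n-1}_{b_i}^{-1}$ per realisation, and the factor $1/(2a_0)$ for the topmost-start constraint) is exactly how $\mu_n^\shape$ is obtained, and the law of large numbers and the strong-shape CLT are indeed run through the regenerative structure over the reset times. The regeneration issue you flag at the end has a one-line resolution you should supply rather than leave as an obstacle: for $1\leq j\leq\str(\mb a)$, a closed loop with leftmost level $s$ uses at least two across edges in layer $s+j$, whereas a reset layer contains no across edge ($n$ even) or exactly one ($n$ odd); hence every closed loop is completed inside a single renewal interval and $C_n^\shape$ has regenerative increments, so the renewal reward theorem and the regenerative CLT apply (with a uniform-integrability argument, as for the transverse string, needed to upgrade to convergence of all moments — a point your sketch also leaves open).

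The genuine gap is the cross-covariance step behind \eqref{eq:wshapeCLT}. You model the per-renewal-block contributions as mutually exclusive Bernoulli rewards with per-block covariance $-p_{\shape_1}p_{\shape_2}$. This premise is false: a single renewal block may contain several closed loops, and loops of two distinct strong shapes $\shape_1\neq\shape_2$ with the same weak shape can occur in the same block, so the per-block rewards are counts, not exclusive indicators. The only exclusivity available is vertex-local: two loops with different strong shapes cannot have the same starting vertex. That is precisely what produces the correction term: writing $Y_{n,k}^\shape(s)$ for the indicator that a loop of strong shape $\shape$ starts at the $(n-k)$-th vertex of level $s$, one has $\bE[Y_{n,k}^{\shape_1}(s)Y_{n,k'}^{\shape_2}(s+h)]=\bE[Y_{n,k}^{\shape_1}(s)Y_{n,k'}^{\shape_1}(s+h)]$ for all $k,k',h$ except $k=k'$, $h=0$, where the left-hand side vanishes while the right-hand side equals $\bE[Y_{n,k}^{\shape_1}(s)]$; summing over starting positions and levels gives
\[
 \bCov\bigl[C_n^{\shape_1}(t),C_n^{\shape_2}(t)\bigr]=\bVar\bigl[C_n^{\shape_1}(t)\bigr]-\bigl(t+O(1)\bigr)\mu_n^\shape,
\]
whence $\bCov[C_n^{\shape_1}(t),C_n^{\shape_2}(t)]/t\to\sigma_{n,\shape}^2-\mu_n^\shape$ — the limit you assert, but not one your block-level argument establishes. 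Relatedly, your identification of $\sigma_{n,\shape}^2$ is purely abstract (the variance appearing in the regenerative CLT), which suffices for \eqref{eq:sshapeCLT} but not for \eqref{eq:wshapeCLT}: to express the asymptotic variance of $C_n^{\mb a}(t)$ through $\sigma_{n,\shape}^2$ and $\mu_n^\shape$ one needs a concrete second-moment computation for the counts (in the paper, the lag-$h$ autocovariances of the per-level counts $Y_n^\shape(s)$, which vanish for $h>\str(\mb a)+1$, matched against the CLT normalisation via uniform integrability). As written, the variance in the weak-shape CLT is asserted rather than proved.
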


\begin{remark}
 The constant $\sigma_{n,\shape}^2$ is explicit, and given by \eqref{eq:sigmashape}. 
\end{remark}

% \fabian{Shouldn't we also be able to say something about joint convergence (at least for finitely many shapes) without too much effort?}
% \paul{it looks a bit like the Poisson tool that Svante likes, maybe a bit more involved. I will check.}

\begin{remark}
 In light of Theorems~\ref{theorem:Soo} and \ref{theorem:CC}, it might seem surprising that the expressions on the right-hand sides of \eqref{eq:sshapeLLN} and \eqref{eq:wshapeLLN} do not depend on the parity of $n$. Considering the cases $n=2$ and $n=3$ may serve as a sanity check. In both cases, the only possible strong shape for a closed loop on $2\ell$ vertices is $\shape_\ell:=A^{\ell-1}BA^{\ell-1}B$ where $\ell\geq 1$, and the corresponding weak shape is given by $a_k=1$ for $k\leq \ell$ and $a_k=0$ otherwise. For $n=2$ we obtain
 \begin{equation*}
  \lim_{t\to\infty} \bE\left[\sum_{\ell\geq1} 2\ell\frac{C_2^{\shape_\ell}(t)}{t} \right]
  = \sum_{\ell\geq1} 2\ell \lim_{t\to\infty} \bE\left[\frac{C_2^{\shape_\ell}(t)}{t} \right] 
  = \sum_{\ell\geq1} \frac{\ell 2^{\ell}}{3^{\ell+1}} = 2
 \end{equation*}
which is reasonable since the expected value on the left-hand side is the expected number of vertices per level that lie on closed loops. (We first swap summation and expectation, and then limit and summation, as all quantities involved are nonnegative). 
%\paul{Everything is nonnegative so we can switch...} 
%\fabian{Yes, Svante confused me into writing it this way, but perhaps more so because one needs to be careful in which order one switches.}
 For $n=3$, we obtain analogously
  \begin{equation*}
  \lim_{t\to\infty} \bE\left[\sum_{\ell\geq1} 2\ell\frac{C_3^{\shape_\ell}(t)}{t} \right]
  = \sum_{\ell\geq1} 2\ell \lim_{t\to\infty} \bE\left[\frac{C_3^{\shape_\ell}(t)}{t} \right] 
  = \sum_{\ell\geq1} \frac{\ell 6^\ell}{5^{\ell+1}3^{\ell-1}} = \frac{2}{3}.
 \end{equation*}
 Once again, this is reasonable since the expected number of vertices per level on the transverse string converges to $\frac{7}{3}$ by Theorem \ref{theorem:Soo}. 
\end{remark}

\begin{remark}
 The methods used in the proof of Theorem~\ref{theorem:shape} could in principle be used to also show limit theorems for the number of components of a given size $2\ell$. However, this approach leads to combinatorial complications which we were not able to overcome. 
\end{remark}

\begin{remark}[A remark on the proof strategies]\label{rem:modified}
 The proof of Theorem~\ref{theorem:CC} relies on limit theorems for sums of independent random variables, and the proofs for Theorems~\ref{theorem:Soo} and \ref{theorem:shape} rely on limit theorems for processes with regenerative increments, combined with a result on the uniform integrability of such processes, Lemma~\ref{lemma:SooUI}. The precise setup for renewal theory will be described in Section~\ref{sec:RT} below. 
 As consequence, it follows from a functional limit theorem like \cite[Theorem 5.47]{Serfozo09} that the processes $C_n(t), |\Soo(t)|, C_n^\shape(t)$ and $C_n^{\mb a}(t)$ converge in distribution after centering and rescaling to standard Brownian motions on $[0,1]$, in the usual Skorokhod space. Furthermore, by observing that linear combinations of processes with regenerative increments are again processes with regenerative increments, and using the Cram\'er-Wold device together with a lemma on uniform integrability (Lemma~\ref{lemma:SooUI}), one can show that the limit laws in \eqref{eq:sshapeCLT} and \eqref{eq:wshapeCLT} extend to multivariate limit laws for any finite collection of shapes. 
 
 To reduce technical obstacles in the proofs, we will use a \emph{modified Brauer diagram}, denoted $\Br'_n(t)$, where we draw an additional uniform random matching on $2m$ (uniformly chosen) vertices of level $0$ to the left of the usual $\Br_n(t)$. This has certain advantages: it eliminates slings with both endpoints in $\mb X_0$ and turns them into segments of closed loops or of a transverse string. More so, if $n$ is odd then the vertex in $\mb X_0$ that has no edge going out to its left is necessarily the endpoint of a transverse string, and the transverse string is unique in $\Br'_n(t)$ for all $t \geq 1$. The change from $\Br_n(t)$ to $\Br'_n(t)$ bears no consequences for the results presented above, as it changes the number of components or the length of the transverse string by at most a constant multiple of a geometrically distributed random variable. 
\end{remark}

%%%%%%%%%%%%%%%%%%%%%%%%%%%%%%%%
\section{The sling process and renewal theory} 
\label{sec:RT}

Recall from the introduction that $\Br_n=\{\Br_n(t)\}_{t\in\IN}$ is a stochastic process, where $\Br_n(t+1)$ is obtained by concatenating a uniform random matching (independent from what happened in the prior $t$ layers) to the right of $\Br_n(t)$.  In particular, this process is Markovian with respect to its natural filtration.

Recall also from Remark~\ref{rem:modified} that $\Br'_n(t)$ denotes the modified random Brauer diagram, where $\Br'_n(0)$ consists of a maximum matching of the vertices in $\mb X_0$ drawn to their left. Clearly, the process $\Br'_n:=\{\Br_n(t)\}_{t\in\IN_0}$ is Markovian with respect to its natural filtration, which we will denote by $\F=\{\F_t\}_{t\in\IN_0}$. For notational convenience, we will write 
\[
 \bP_t[\ \cdot\ ] := \bP[\ \cdot\ | \F_t].
\] 

%\paul{Define $IN, IN_0$ somewhere.}
%\fabian{Done! It's introduced in the Notation subsection above}

Denote now by $\Slr_n(t)$ the subgraph of $\Br'_n(t)$ consisting of all slings. Since we are considering the modified random Brauer diagram, the slings in $\Slr_n(t)$ necessarily have both endpoints in level $t$. Like $\Br'_n$, we will regard $\Slr_n:=\{\Slr_n(t)\}_{t\in\IN_0}$ as a stochastic process, which we will refer to as the \emph{sling process}. The sling process is also Markovian since $\Slr_n(t+1)$ only depends on $\Slr_n(t)$ (and the randomness of the layer $t+1$), but not on its history before time $t$. Throughout the paper, we will occasionally consider a distinguished sling $S$ in $\Slr_n(t)$. Since this sling by definition is a part of $\Br'_n(t)$, this entails that the choice of $S$ is independent of all layers to the right of level $t$.  

To complement $\Slr_n(t)$, we denote the subgraph of $\Br'_n(t)$ consisting of all closed loops by $\Cl_n(t)$.

With constant probability $p_0=\frac{(2m-1)!!^2}{(2n-1)!!}$ (if $n$ is even) and $p_0=\frac{n^2(2m-1)!!^2}{(2n-1)!!}$ (if $n$ is odd), layer $t$ contains no edges (in case $n$ is even) or exactly one edge (in case $n$ is odd) connecting level $t-1$ to level $t$. For example, this happens in layers $3$ and $7$ in Figure~\ref{fig:Br57}. As a consequence, almost surely, infinitely often the sling process $\Slr_n(t)$ contains $m$ slings. For $t$ such that $\Slr_n(t)$ has $m$ slings, we will call layer $t$ a \emph{reset}. In words, it resets the process $\Slr_n(t)$ in the sense that $\Slr_n(t)=\Slr_n(0)$ holds in distribution if and only if layer $t$ is a reset. 

The times $t$ at which resets occur can be regarded as \emph{renewal times} in the sense of renewal theory. We set $T_0=0$ and denote by $T_j$ for $j\geq 1$ the $j$-th reset time. Denote the length of the renewal intervals by $R_i:=T_i-T_{i-1}$ for $i\geq 1$. According to the preceding discussion, the sequence $R_1,R_2,\dots$ is a sequence of independent random variables with distribution $\Geo(p_0)$. By definition, the $j$-th reset time can also be written $T_j=\sum_{i=1}^j R_i$ for $j\geq 1$. Moreover, for $t\in\IN$, we denote by $N(t):=\max\{j\geq 0:T_j\leq t\}$ the renewal counting process associated with the renewal times.

A concept which we will refer to several times is that of a process \emph{with regenerative increments} over renewal times $T_j$: referring to \cite[Definition~2.52]{Serfozo09}, we say that an $\IR$-valued stochastic process $X=(X_t)_{t\geq0}$ has regenerative increments over $(T_j, j\geq 1)$, if the pairs 
\[
 \big(\xi_j,\{X(T_{j-1}+t) - X(T_{j-1})\}_{0\leq t\leq \xi_j}\big),
\]
indexed by $j$, are independent and identically distributed, where $\xi_j=T_j-T_{j-1}$ for $j\geq 1$. In other words, this means that $X$ has i.i.d. increments over each renewal interval. We point to \cite[Chapter~2]{Serfozo09} for a general introduction to renewal theory, and cite specific results as we need them. 

Note that for all $t\geq 0$, the graph $\Slr_n(t)$ consists of exactly $m$ connected components (i.e. slings), that are naturally subgraphs of $\Br'_n(t+1)$. Letting $S$ be any sling of $\Slr_n(t)$, three different scenarios are possible in $\Br'_n(t+1)$: firstly, $S$ might become part of a (larger) sling in $\Slr_n(t+1)$; secondly, $S$ might become part of a closed loop in $\Br'_n(t+1)$; and thirdly, $S$ might become part of a transverse string. The existence of resets implies that every sling in $\Slr_n(t)$ (for any value of $t$) will eventually become either a part of a transverse string or a part of a closed loop in $\Slr_n(t+\tau_S)$, for some $\tau_S$ stochastically bounded by $\Geo(p_0)$.

%%%%%%%%%%%%%%%%%%%%%%%%%%%%%%%%%%%%%%%%%
\section{The transverse string -- proofs}
\label{sec:proofsII}

We will first prove Theorem~\ref{theorem:Soo} about the asymptotic behaviour of the transverse string, when $n$ is odd. We begin by verifying the claims made prior to Theorem~\ref{theorem:Soo} about the existence and uniqueness of a transverse string. Note that for the purpose of this lemma, we return to the setting of the non-modified Brauer diagram $\Br_n(t)$. As such, the stopping times in the lemma below are adapted to the natural filtration of $\Br_n$. 

\begin{lemma}\label{lemma:tau}
For $n$ even, there exists a stopping time $\tau\leq T_1\sim\Geo(p_0)$ such that there is no transverse string in $\Br_n(t)$ for $t \geq \tau$. For $n=2m+1$, there exists a stopping time $\tau\leq T_1\sim\Geo(p_0)$ such that there is a unique transverse string in $\Br_n(t)$ for $t \geq \tau$. In both cases, $\Br_n(t)$ contains $2m$ slings for all $t\geq\tau$, exactly $m$ of which have both endpoints in level $t$. 
\end{lemma}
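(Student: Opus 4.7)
The plan is to take $\tau$ to be the first layer whose matching has the minimal allowed number of cross edges, and then derive the lemma from two elementary facts: the number of cross edges in a random layer follows a known law, and the number of transverse strings in $\Br_n(t)$ is non-increasing in $t$. Let $c_t$ denote the number of cross edges in layer $t$ (edges connecting $\mb X_{t-1}$ to $\mb X_t$), and set
\[
 \tau = \inf\{t \geq 1 : c_t = 0 \text{ if } n = 2m, \text{ or } c_t = 1 \text{ if } n = 2m+1\}.
\]
Direct enumeration of perfect matchings on $\mb X_{t-1}\cup\mb X_t$ shows that the probability that layer $t$ satisfies this condition equals $p_0$: in the odd case, the endpoints of the unique cross edge can be chosen in $n^2$ ways, after which the two levels are matched independently. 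Since layers are i.i.d., $\tau$ is a stopping time with $\tau \sim \Geo(p_0)$, and at time $\tau$ the matching of layer $\tau$ creates $m$ slings of length $2$ in level $\tau$, so $\tau \leq T_1$ as well.

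The heart of the proof is the monotonicity claim: writing $k_t$ for the number of transverse strings in $\Br_n(t)$, one has $k_{t+1}\leq\min(k_t,c_{t+1})$ for all $t\geq 1$. The argument is as follows: any transverse string $C$ of $\Br_n(t+1)$ has endpoints $a\in\mb X_0$ and $b\in\mb X_{t+1}$, and its unique edge incident to $b$ is a cross edge $(u,b)$ of layer $t+1$ with $u\in\mb X_t$. Deleting this edge, the subpath of $C$ from $a$ to $u$ is a connected component of $\Br_n(t)$ --- crucially, $u$ has degree $1$ in $\Br_n(t)$ since $\mb X_t$ is then terminal --- and hence a transverse string of $\Br_n(t)$. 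Distinct $C$'s give distinct $b$'s, hence distinct cross edges $(u,b)$ (so $k_{t+1}\leq c_{t+1}$) and distinct transverse strings of $\Br_n(t)$ (so $k_{t+1}\leq k_t$).

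Combining these ingredients finishes the proof. The parity $k_t\equiv n\pmod{2}$, which follows from pairing up the $\mb X_0$-endpoints of non-loop components into slings and transverse-string ends, together with $k_\tau\leq c_\tau\in\{0,1\}$ forces $k_\tau=0$ for $n$ even and $k_\tau=1$ for $n$ odd; monotonicity then pins $k_t$ at this value for all $t\geq\tau$. Since there are always exactly $n$ non-loop components (Remark~\ref{rem:CC}), the remaining $n-k_t=2m$ of them are slings, and counting endpoints at each extreme level gives $m$ slings with both endpoints in $\mb X_0$ and $m$ with both in $\mb X_t$.

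The main obstacle will be making the monotonicity $k_{t+1}\leq k_t$ rigorous. The heuristic ``multiplying by a matching can only close, never open, transverse strings'' is intuitive, but the proper proof needs the subpath argument above; its crucial input is that $u\in\mb X_t$ has degree $1$ in $\Br_n(t)$, a feature special to the rightmost level that lets the subpath be identified with a genuine component of the previous graph rather than merely a subgraph.
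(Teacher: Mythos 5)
Your proposal is correct and takes essentially the same route as the paper's proof: the key monotonicity (each transverse string of $\Br_n(t+1)$ restricts to a transverse string of $\Br_n(t)$), the observation that the first layer with the minimal number of cross edges forces the count of transverse strings to be $0$ (resp.\ $1$ by parity), and the sling counts deduced from the $n$ non-loop components. Taking $\tau$ to be that first layer, i.e.\ $\tau=T_1$, rather than the possibly earlier time at which the count stabilises, is fine since the lemma only asserts $\tau\leq T_1$.
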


%\paul{There are two different filtrations that we work with, one for $\Br_n$ and the other one for $\Br'_n$. Here it would be worth specifying that $\tau$ is a stopping time for the $\Br_n$-filtration (which does not have a name).}
%\fabian{Added a sentence on that just before the lemma}
%
%\paul{define a notation for "stochastically bounded"?}
%\fabian{Maybe, but for this lemma at least, the inequality is deterministic}

\begin{proof}
 Let us consider the number of slings in $\Br_n(t)$ with both endpoints in level $t$. By the earlier classification of the connected components of $\Br_n(t)$, the vertices in level $t$ are either the unique endpoints of transverse strings or endpoints of slings. The number of transverse strings is clearly non-increasing in $t$ because any transverse string in $\Br_n(t+1)$ uniquely restricts to a transverse string in $\Br_n(t)$. Furthermore, every sling has exactly two endpoints, so that the number of slings is non-decreasing in $t$. Additionally, for $n$ odd, for parity reasons at least one of the vertices in level $t$ belongs to a transverse string.
 
 On the other hand, the fact there are resets at times $T_1,T_2,\dots$ implies that for even $n$ there is no edge between levels $T_1-1$ and $T_1$, and for odd $n$ there is a unique edge between these levels. 
\end{proof}

\begin{remark}
Note that, in general, $\tau<T_1$, that is, the uniqueness or nonexistence of a transverse string occurs strictly before the first reset time.
\end{remark}

%\paul{Actually, we can just choose $\tau=T_1$...}
%\fabian{yes, but I think we shouldn't. I'd rather add a sentence like ``Note that in general, uniqueness/nonexistence of the transverse string occurs strictly before the first reset'' somewhere (end of proof, perhaps?)}
%\paul{done}

For $t \geq \tau$, we can consider an exploration of the transverse string starting from its endpoint in level 0. We then define $v_\infty(t)$ and $w_\infty(t)$ to be respectively the first and last vertices in level $t$ reached by this exploration. We will use this notation for both $\Br_n$ and $\Br_n'$. Alternatively, $v_\infty(t)$ is the unique endpoint of $\Soo(t)$ in $\Br_n(t)$ or $\Br'_n(t)$, and $w_\infty(t)$ is the starting point of the unique infinite component in $\Br_n(\infty)\setminus \Br_n(t)$. Observe that $w_\infty(t)$ is not adapted to the natural filtration of the diagram, while $v_\infty(t)$ is. 

For the proof of Theorem~\ref{theorem:Soo}, we work with the modified Brauer diagram $\Br'_n(t)$. For $n=2m+1$, this has the advantage that $\Slr_n(t)$ consists of $m$ slings for all $t\geq 1$. Thus the one vertex not matched up by $\Slr_n(0)$ is the starting point of a transverse string, and this transverse string is unique already at time $t=0$. This simplifies the following arguments that would otherwise require an extra condition such as $t\geq \tau$. We denote the transverse string in $\Br'_n(t)$ by $\Soo'(t)$.

We define $\Soo'(\infty)$ analogously to $\Soo(\infty)$ preceeding Theorem~\ref{theorem:Soolocal}. We also set 
\[
 \Soo''(t):=\{v\in\Br'_n(t): v\in\Soo'(\infty)\}, 
\]
that is, the set of all vertices up to level $t$ that are part of $\Soo'(t')$ for some $t'\geq t$ (and thus for all $t''\geq t'$) -- note that $\{v\in\Br'_n(t)\cap \Soo'(\infty)\}\in \F_{T_{N(t)+1}}$, since vertices up to level $t$ are either on the transverse string or in a closed loop in $\Br'_n(T_{N(t)+1})$. 

For $t \geq \tau$, observe that, as sets of vertices, $\Soo(t)\ssq \Soo'(t)\ssq \Soo''(t)$, and that $\Soo''(t)\setminus\Soo(t)$ consists of some of the slings in $\Br_n(t)$. 

%\paul{You consider here inclusion as sets of vertices, while the first two have been defined as subgraphs.}
%\fabian{Yes, but I really don't want to use up more notation for this (also, we already use $V$ later). Maybe we can just insert ``for the respective vertex sets'' after the ``observe that''? Some amount of abuse of notation is probably ok here.}

In particular, 
\[
 |\Soo''(t)\setminus\Soo(t)|\leq nT_1 + n\big(t-T_{N(t)}\big)\leq n\big(R_1 + R_{N(t)+1}\big)
\]
so exchanging $\Soo(t)$ for $\Soo''(t)$ bears no consequence for Theorem~\ref{theorem:Soo}, since $t^{-1/2}n(R_1+R_{N(t)+1}) \to 0$ in probability, as $t\to\infty$. 

\subsection*{Law of large numbers} 
We first show convergence in expectation for the law of large numbers in \eqref{eq:SooLLN}, and then conclude almost sure convergence through the use of renewal theory. For $t<t'$ and $S$ a given sling in $\Slr_n(t)$, we denote by $S\ssq \Soo'(t')$ the event that $S$ becomes a part of the transverse string in $\Br'_n(t'+1)$ and by $S\ssq \Cl_n(t')$ the event that $S$ becomes a part of a closed loop in $\Br'_n(t'+1)$.

\begin{lemma}\label{lemma:SlFate}
Let $n=2m+1$ and denote by $S$ a distinguished sling in $\Slr_n(t)$. For $t'>t$, denote by $S\ssq \Soo'(t')$ the event that $S$ becomes a part of the transverse string in $\Br'_n(t'+1)$ and by $S\ssq \Cl_n(t')$ the event that $S$ becomes a part of a closed loop in $\Br'_n(t'+1)$. Then 
 \begin{equation}\label{eq:FateA}
  \bP_t[S\ssq\Soo'(t+1)]=\frac{2}{n+2}
 \end{equation}
 and 
 \begin{equation}\label{eq:FateB}
  \bP_t[S\ssq \Cl_n(t+1)]=\frac{1}{n+2}.
 \end{equation}
 Moreover, denote by $D(S)$ the smallest integer $d$ such that either one of the events $S\ssq\Soo'(t+d)$ or $S\ssq \Cl_n(t+d)$ has occurred. Then $D(S)\sim \Geo(\frac{3}{n+2})$. 
\end{lemma}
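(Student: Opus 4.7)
The plan is to compute the one-step absorption probabilities \eqref{eq:FateA} and \eqref{eq:FateB} by coupling the exploration of $S$'s component in $\Br'_n(t+1)$ with the sequential revelation of $\Pi_{t+1}$, and then to deduce the geometric distribution of $D(S)$ from the Markov property together with Lemma~\ref{lemma:tau}.

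Let $u,v$ denote the endpoints of $S$ in $\mb X_t$, $w$ the (unique, since $n$ is odd) endpoint of $\Soo'(t)$ in $\mb X_t$, and $\gp$ the partial involution on $\mb X_t$ sending each sling endpoint to the other endpoint of the same sling. I will analyze the walk $W_0=u$, $W_1=\gp(W_0)=v$, $W_2=\Pi_{t+1}(W_1)$, $W_3=\gp(W_2)$, $\dots$, alternating $\gp$-steps (inside $\mb X_t$) with $\Pi_{t+1}$-steps that may exit into $\mb X_{t+1}$. This walk terminates as soon as a $\Pi_{t+1}$-step either (i) lands in $\mb X_{t+1}$ (so $u$'s component extends as a sling on this side), (ii) returns to $u$ (so $u$'s component closes into a cycle, giving $S\ssq \Cl_n(t+1)$), or (iii) lands on $w$ (so $u$'s component contains $\Soo'(t)$, giving $S\ssq\Soo'(t+1)$). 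Reveal the edges of $\Pi_{t+1}$ one at a time along this walk: at each reveal, the partner of the current vertex is uniform over the remaining unmatched vertices (other than itself). The key observation is that as long as the walk has stayed inside $\mb X_t$, exactly one unmatched candidate is $u$, exactly one is $w$, and exactly $n$ lie in $\mb X_{t+1}$; hence, conditionally on the walk terminating at this particular reveal, the three terminating outcomes occur in ratio $n:1:1$.

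Since this ratio is the same at every reveal and the walk terminates almost surely (in at most $m$ reveals, as each continuing reveal consumes two of the $2m$ originally available sling endpoints), the same ratio governs the unconditional terminating probabilities, so they equal $n/(n+2)$, $1/(n+2)$, $1/(n+2)$, respectively. This immediately yields \eqref{eq:FateB}. For \eqref{eq:FateA}, I observe that $w$ lies in $u$'s component in $\Br'_n(t+1)$ iff \emph{either} the $\gp$-first walk reaches $w$, \emph{or} the analogous $\Pi_{t+1}$-first walk $u\to\Pi_{t+1}(u)\to\gp(\Pi_{t+1}(u))\to\cdots$ (exploring $u$'s component in the opposite direction) reaches $w$. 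The $u\leftrightarrow v$ symmetry of $S$, preserved in law by the uniformity of $\Pi_{t+1}$, identifies the second walk with the first under swapping of $u$ and $v$, so it too reaches $w$ with probability $1/(n+2)$; as $w$ is a single vertex the two events are disjoint, and \eqref{eq:FateA} follows.

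The geometric law for $D(S)$ is then immediate from the Markov property: Lemma~\ref{lemma:tau} guarantees that $\Slr_n(t+d)$ always consists of $m$ slings, one of which is the (possibly enlarged) descendant of $S$, so the setup at each subsequent layer is distributionally identical to that at layer $t+1$. Each further layer thus contributes an independent absorption event with probability $3/(n+2)$, giving $D(S)\sim\Geo(3/(n+2))$. The main obstacle in this plan is the bookkeeping for the exploration walk: one must confirm that the counts $1,1,n$ of unmatched $u$-, $w$-, and $\mb X_{t+1}$-candidates persist at every reveal (which requires checking that the walk cannot revisit $u$, $w$, or any $\mb X_{t+1}$-vertex before terminating). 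Once this is established, the ratio argument collapses what would otherwise be an intricate sum over reveal indices into the stated closed form.
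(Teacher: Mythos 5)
Your proposal is correct and follows essentially the same route as the paper: an exploration of the component of $S$ through layer $t+1$, revealing the uniform matching edge by edge so that the $n+2$ possible terminating vertices are equally likely, followed by independence of layers (and the constancy of the one-step probabilities) to get $D(S)\sim\Geo(\frac{3}{n+2})$. The only cosmetic difference is in \eqref{eq:FateA}: the paper runs a single exploration from $v_\infty(t)$, which hits one of the two endpoints of $S$ first with probability $\frac{2}{n+2}$, whereas you explore outward from $S$ in both directions and add two probabilities of $\frac{1}{n+2}$ by the $u\leftrightarrow v$ symmetry — a slightly longer but equally valid variant.
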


\begin{proof}
 We employ an exploration technique similar to the definition of a strong shape in Definition~\ref{def:shape}. The technique, which we will use several times in the paper, consists in exploring the component of a sling until one hits a given set of vertices. In $\Br'_n(t+1)$, follow along $\Soo'(t+1)$ starting from $v_\infty(t)$ towards the right. This exploration terminates once one of the $n$ vertices at level $t+1$ or one of the two endpoints of $S$ is reached. Thus $S\ssq \Soo'(t+1)$ if and only if the exploration reaches one of the two endpoints of $S$ before it reaches level $t+1$. Since the matching at layer $t+1$ is uniform and independent of $(S,v_\infty(t))$, this happens with probability $\frac{2}{n+2}$, proving \eqref{eq:FateA}. 
 
 For \eqref{eq:FateB}, start a similar exploration from the top-most endpoint of $S$ towards the right. Terminate this exploration if it either reaches a vertex in level $t+1$, or if it reaches $v_\infty(t)$, or if it reaches the second endpoint of $S$. Each of these $n+2$ vertices is reached first with equal probability, but only the third case corresponds to the event where $S$ is closed to a loop by layer $t+1$. Hence, $\bP_t[S\ssq \Cl_n(t+1)]$ is $\frac{1}{n+2}$.
 
In order to get the distribution of $D(S)$, observe first that the events $S\ssq\Soo'(t+1)$ and $S\ssq\Cl_n(t+1)$ are disjoint. On the other hand, if none occur, then $S$ is part of a (larger) sling in $\Slr_n(t+1)$. The claim follows, since layers are independent. 
\end{proof}

\begin{remark}\label{rem:X}
 Using the explorations of Lemma~\ref{lemma:SlFate}, it is possible to make a more refined statement about the number of slings in $\Slr_n(t)$ that become part of $\Soo'(t+1)$. If we denote this number by $X(t)$, then 
 \[
  \bP_t[X(t)=s]=2^s(2m+1)\frac{(m)_s}{\fdf{4m+1}_{s+1}}.
 \]
 Indeed, assume that $X(t)=s$. Then, exploring $\Soo'(t+1)$ starting from $\mathbf{X}_0$, the $s$ slings appear in one of $(m)_s$-many permutations. Moreover, every sling is traversed in one of two directions, contributing the factor $2^s$. After having traversed the $s$ slings, $\Soo'(t+1)$ then has to move on to one of $2m+1$ vertices on level $t+1$. Finally, one needs a specific set of $s+1$ edges to be present in the matching of layer $t+1$, and this happens with probability $\frac{(4m-1-2s)!!}{(4m+1)!!}=\frac{1}{\fdf{4m+1}_{s+1}}$. 
 
 By an analogous argument, if $X'(t)$ is the number of slings different from $S$ in $\Slr_n(t)$ that end up in the same closed loop in $\Br'_n(t+1)$ as the fixed sling $S$, then 
 \[
  \bP_t[S\ssq\Cl_n(t+1)\text{ and }X'(t)=s]=2^s \frac{(m-1)_s}{\fdf{4m+1}_{s+1}}.
 \]
 
\end{remark}

%\paul{We have to be careful about the "belongs to $S_\infty(\cdot)$ eventually: for this we need to work on the modified version of $B_n(t)$, and then maybe change notations for $\Soo$.}

\begin{proof}[Proof of Theorem~\ref{theorem:Soo}, law of large numbers]
 Denote by $U_t$ a uniform vertex chosen independently of $\F_t$ among all $n(t+1)$ vertices in $\Br'_n(t)$, and denote its level by $L=L(U_t)$. Then 
 \begin{equation}
 \label{eq:exp}
  \frac{\bE[|\Soo''(t)|]}{n(t+1)}=\frac{1}{n(t+1)}\sum_v \bE[\one\{v\in\Soo''(t)\}]=\bP[U_t\in \Soo'(\infty)]
 \end{equation}
 for all $t$.  
 In analogy to Lemma~\ref{lemma:SlFate}, we denote by $D=D(U_t)$ the smallest integer $d$ such that $U_t$ is not any more part of a sling in $\Br'_n(L+d)$. Roughly speaking, $D$ is the (random) number of layers until the process $\Br'_n$ has decided whether $U_t$ is part of the transverse string or part of a closed loop. In particular, the random variables $D$ and $L$ are independent. We then obtain: 
 \begin{equation}\label{eq:pApB}
  \bP[U_t\in \Soo'(\infty)] = \sum_{d=0}^\infty \bP[D=d]\bP[U_t\in \Soo'(L+d)\mid D=d]
 \end{equation}
 The event $D=0$ means that $U_t$ is exactly the unique endpoint of $\Soo'(L)$, which happens with probability $\frac{1}{n}$. For $D>0$, the vertex $U_t$ is a uniformly chosen endpoint of one of the $m$ slings in $\Slr_n(L)$, and then whether or not $U_t$ ends up as a vertex on the transverse string is equivalent to whether or not its sling ends up as part of the transverse string or as part of a closed loop. In particular, by Lemma~\ref{lemma:SlFate}, $D$ is geometrically distributed when conditioned on $D>0$. Moreover, conditioned on $D=d>0$, the event $U_t\in\Soo'(L+d)$ happening means that $U_t$ lies on a sling $S$ in $\Slr_n(L+d-1)$ that gets incorporated into the transverse string in layer $L+d$, knowing that it otherwise becomes part of $\Cl_n(L+d)$. The former happens with probability $\frac{2}{3}$ according to Lemma~\ref{lemma:SlFate}. Hence, \eqref{eq:pApB} and Lemma~\ref{lemma:SlFate} yield
 \begin{equation}\label{eq:2n+1_3n}
  \bP[U_t\in \Soo'(\infty)]
  = \frac{1}{n}+\left(1-\frac{1}{n}\right)\sum_{d=1}^\infty \frac{3}{n+2}\left(\frac{n-1}{n+2}\right)^{d-1} \frac{2}{3}
  = \frac{2n+1}{3n},
 \end{equation}
 which implies convergence of the mean \eqref{eq:SooLLN} by \eqref{eq:exp}, since $|S'_\infty(\infty) \setminus S_\infty(\infty)|$ is stochastically bounded. 
 
We remark here that it can also be concluded from \eqref{eq:2n+1_3n} that 
 \begin{equation}\label{eq:23}
  \bP_t[S\ssq\Soo'(\infty)]=\frac{2}{3}
 \end{equation}
 for a sling $S\in\Slr_n(t)$. 
%\paul{chosen independently of the right side.} 
 This fact will be repeatedly used below in the variance computations for the central limit theorem. 

 To show almost sure convergence, we note that $\{|\Soo''(t)|\}_{t\in\IN_0}$ is a process with regenerative increments over $(T_j, j \geq 0)$, that is, the pairs 
 \[
  \big(R_j, \{ |\Soo''(T_{j-1}+t)|-|\Soo''(T_{j-1})| \}_{0\leq t\leq R_j} \big),
 \]
indexed by $j$, are independent and identically distributed (where we set $T_0=0$). 
%\paul{What do you mean, they are i.i.d. for each $j$?}
%\fabian{That the pairs, indexed by $j\geq 1$ form an i.i.d. family of random variables. Feel free to rephrase it as you seem fit, as I don't see an issue here}
 It then follows from the elementary renewal/reward theorem that
 \[
  \frac{\bE[|\Soo''(t)|]}{n(t+1)} \to \frac{\bE[|\Soo''(T_1)|]}{n\bE[T_1]}
 \]
 and by a suitable strong law of large numbers such as \cite[Theorem~2.54]{Serfozo09}, we also have almost sure convergence for $\frac{|\Soo''(t)|}{n(t+1)} \to \frac{\bE[|\Soo''(T_1)|]}{n\bE[T_1]}$.
 
 Since we have already established \eqref{eq:SooLLN} for convergence in expectation, we can conclude 
 \begin{equation}\label{eq:quotexpect}
  \frac{\bE[|\Soo''(T_1)|]}{n\bE[T_1]}=\frac{2n+1}{3n}
 \end{equation}
 which provides almost sure convergence in \eqref{eq:SooLLN}. 
\end{proof}

\subsection*{Central limit theorem}
We prove here Theorem \ref{theorem:Soo}(ii).
As established in the proof for the strong law of large numbers, $\{\Soo''(t)\}_{t\in\IN_0}$ is a process with regenerative increments, and therefore subject to the regenerative central limit theorem, which we cite here for convenience: 

\begin{theorem}[from {\cite[Theorem~2.65]{Serfozo09}}]\label{theorem:regCLT}
 Suppose $Z(t)$ is a process with regenerative increments over $(T_j, j \geq 0)$ such that $\bE[T_1]$ and $\bE[Z(T_1)]/\bE[T_1]$ are finite. In addition, let 
 \[
  M_j=\sup_{T_{j-1}\leq t\leq T_j} |Z(t)-Z(T_{j-1})|, \quad j\geq 1
 \]
 and assume $\bE[M_1]$ and $\bVar\left[Z(T_1)-\frac{\bE[Z(T_1)]}{\bE[T_1]}T_1\right]$ are finite and the variance is strictly positive. Then
 \begin{equation}\label{eq:regCLT}
  \frac{Z(t)-\frac{\bE[Z(T_1)]}{\bE[T_1]}t}{\sqrt{\frac{1}{\bE[T_1]}\bVar\left[Z(T_1)-\frac{\bE[Z(T_1)]}{\bE[T_1]}\right]t}} \to N(0,1)
 \end{equation}
 in distribution as $t\to\infty$.
\end{theorem}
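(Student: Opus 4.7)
The plan is to reduce the regenerative CLT to the classical Lindeberg--L\'evy CLT applied to the i.i.d.\ renewal-block contributions, via a boundary-correction argument. Set $\mu := \bE[Z(T_1)]/\bE[T_1]$ and, for $j \geq 1$, define
\[
 Y_j \;:=\; \bigl(Z(T_j)-Z(T_{j-1})\bigr) - \mu\,(T_j-T_{j-1}).
\]
By the regenerative-increments hypothesis the random variables $Y_1, Y_2, \dots$ are i.i.d.; they are centred since $\bE[Y_1]=\bE[Z(T_1)]-\mu\bE[T_1]=0$; and they have finite, strictly positive variance $\sigma^2 := \bVar[Z(T_1)-\mu T_1]$ by assumption. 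The classical CLT therefore gives $n^{-1/2}\sum_{j=1}^n Y_j \to N(0,\sigma^2)$ in distribution as $n\to\infty$.

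Second, I would decompose the quantity of interest into a ``bulk'' piece running up to the last renewal before $t$ plus two boundary terms, writing
\[
 Z(t)-\mu t \;=\; \sum_{j=1}^{N(t)}Y_j \;+\; \bigl(Z(t)-Z(T_{N(t)})\bigr) \;-\; \mu\bigl(t-T_{N(t)}\bigr).
\]
By definition $|Z(t)-Z(T_{N(t)})|\leq M_{N(t)+1}$ and $0\leq t-T_{N(t)}\leq R_{N(t)+1}$. Since the $M_j$ are i.i.d.\ with $\bE[M_1]<\infty$, Markov's inequality applied at each fixed index, combined with the renewal law of large numbers $N(t)/t \to 1/\bE[T_1]$ a.s.\ (which lets one localise $N(t)+1$ to a deterministic window of length $o(t)$ around $t/\bE[T_1]$), yields $M_{N(t)+1}/\sqrt{t}\to 0$ in probability. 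The same argument applied to the i.i.d.\ variables $R_j$ (with finite mean $\bE[T_1]$) handles the second boundary term.

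Third, I would apply a random-index CLT (Anscombe's theorem) to $\sum_{j=1}^{N(t)}Y_j$: the elementary renewal theorem provides $N(t)/t\to 1/\bE[T_1]$ a.s., and since the summands $Y_j$ are i.i.d.\ centred with finite variance, Anscombe's theorem yields $N(t)^{-1/2}\sum_{j=1}^{N(t)}Y_j \to N(0,\sigma^2)$ in distribution; replacing $\sqrt{N(t)}$ by $\sqrt{t/\bE[T_1]}$ through Slutsky's lemma and the renewal LLN gives
\[
 \frac{1}{\sqrt{t}}\sum_{j=1}^{N(t)} Y_j \;\to\; N\!\Bigl(0,\,\tfrac{\sigma^2}{\bE[T_1]}\Bigr) \quad \text{in distribution.}
\]
Combining this with the boundary control from the previous step, Slutsky's theorem absorbs the $o(\sqrt{t})$ boundary contribution and yields \eqref{eq:regCLT}.

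The main obstacle is controlling the boundary term $M_{N(t)+1}$ under only the first-moment hypothesis $\bE[M_1]<\infty$: one does \emph{not} obtain $M_n/\sqrt{n}\to 0$ almost surely in this regime, so the argument must operate at the level of convergence in probability, and the random index $N(t)$ has to be dealt with by first localising it to a deterministic window via the renewal LLN before a union bound over that window and Markov's inequality on the i.i.d.\ $M_j$ can be applied. Once this soft step is in place, everything else is a standard assembly of the i.i.d.\ CLT, Anscombe's theorem, the elementary renewal theorem, and Slutsky's lemma.
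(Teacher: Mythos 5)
Your architecture (i.i.d.\ block decomposition, Anscombe's theorem for the bulk, Slutsky for the boundary) is the standard textbook route; it is essentially the proof given in \cite{Serfozo09}, which the paper does not reproduce but cites, together with the explicit warning that that proof is erroneous and that the gap was filled in \cite{Jan23}. Your proposal reproduces exactly the erroneous step. The problem is the boundary term: you claim $M_{N(t)+1}/\sqrt{t}\to 0$ in probability follows from Markov's inequality at each fixed index plus a union bound over a deterministic window $W$ of length $o(t)$ containing $N(t)+1$. Quantitatively this gives
\[
 \bP\big[M_{N(t)+1}>\varepsilon\sqrt{t}\,\big] \;\le\; \bP[N(t)+1\notin W] \;+\; |W|\cdot\frac{\bE[M_1]}{\varepsilon\sqrt{t}} \;=\; o(1) + \frac{o(t)}{\varepsilon\sqrt{t}}\,\bE[M_1],
\]
and the second term is $o(\sqrt{t})$, not $o(1)$, so the bound does not close. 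The renewal LLN only localises $N(t)$ to a window of length $o(t)$ (even a CLT for $N(t)$ would only give $O(\sqrt{t})$), and under the sole hypothesis $\bE[M_1]<\infty$ one cannot have $\max_{j\le ct}M_j/\sqrt{t}\to0$ in probability: that would require $x^2\,\bP[M_1>x]\to0$, e.g.\ a finite second moment. The same defect affects your treatment of $t-T_{N(t)}\le R_{N(t)+1}$.

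The conclusion you need is nevertheless true, but it must be proved by a different mechanism: $M_{N(t)+1}$ is a functional of the renewal block straddling $t$, and conditioning on the location of the last renewal at or before $t$ (the renewal times here are distinct integers, so the renewal measure of a point is at most $1$) gives
\[
 \bP\big[M_{N(t)+1}>x\big]=\sum_{s=0}^{t}\bP[\text{renewal at }s]\;\bP[R_1>t-s,\ M_1>x]\;\le\;\sum_{v\ge0}\bP[R_1>v,\ M_1>x]\;=\;\bE\big[R_1\one\{M_1>x\}\big],
\]
which tends to $0$ as $x\to\infty$ by dominated convergence because $\bE[R_1]<\infty$ and $M_1<\infty$ a.s. Hence $M_{N(t)+1}=O_P(1)$ (it converges in distribution to a size-biased version of $M_1$), which is what your Slutsky step actually requires; note that this uses $\bE[T_1]<\infty$ rather than $\bE[M_1]<\infty$. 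Alternatively, your union-bound argument does close verbatim if one strengthens the hypotheses to $\bE[M_1^2]<\infty$ and $\bE[T_1^2]<\infty$ --- which is precisely the remark the paper makes immediately after the theorem statement (and these stronger hypotheses do hold in the paper's applications, where all relevant quantities are dominated by multiples of geometric random variables). The remaining steps of your proof (centred i.i.d.\ blocks, Anscombe, replacement of $\sqrt{N(t)}$ by $\sqrt{t/\bE[T_1]}$) are fine.
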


As Svante Janson pointed out to us, the proof of this theorem given in \cite{Serfozo09} is erroneous. The argument given there can be made to work if one additionally assumes finite second moments for $M_j$ and $R_j = T_j - T_{j-1}$. Alternatively, the gap in the argument of \cite{Serfozo09} has been filled by \cite{Jan23} under even weaker assumptions, and we refer to the latter for an extended discussion. 

For $Z(t)=|\Soo''(t)|$, it is easy to see that all finiteness conditions are satisfied since the random variables $T_1, Z(T_1)$ and $M_1$ all are stochastically bounded by a constant multiple of a geometric distribution. Moreover, for $n\geq 3$ the variance is also strictly positive. With the help of \eqref{eq:quotexpect}, Theorem~\ref{theorem:regCLT} establishes that a central limit theorem holds, though evaluating the variance directly in \eqref{eq:regCLT} seems unfeasible. 

Instead, the strategy to compute the asymptotic variance will be to directly evaluate $\bVar[t^{-1/2}(\Soo'(t)-\frac{2n+1}{3}t)]$ (Lemmas~\ref{lemma:Slfate2} and \ref{lemma:SooVar}) and employ uniform integrability (Lemma~\ref{lemma:SooUI}). Here again, we will design exploration processes and compute the probability of hitting a certain set of points before an other one.

\begin{lemma}\label{lemma:Slfate2}
 Let $n=2m+1\geq 5$ and let $S_1,S_2$ be two different distinguished slings in $\Slr_n(t)$. 
%\paul{Maybe write somewhere once and for all that if one takes slings in $\Slr_n(t)$ it means they are independent of the right part.} 
 We have the following probabilities:
 \begin{center}
 \begin{tabular}{ccc}
  \toprule
  Line no. & Event & $\bP_t[\text{Event}]$ \\ \midrule \addlinespace
  (i) & $S_1,S_2 \ssq \Soo(t+1)$ & $\frac{8}{(n+4)(n+2)}$ \\ \addlinespace
  (ii) & $S_1 \ssq \Soo(t+1), S_2\ssq \Cl_n(t+1)$ & $\frac{2}{(n+4)(n+2)}$ \\ \addlinespace
  (iii) & $S_1 \ssq \Soo(t+1), S_2\ssq \Slr_n(t+1)$ & $\frac{2(n-1)}{(n+4)(n+2)}$ \\ \addlinespace
  (iv) & $S_1 \ssq \Cl_n(t+1), S_2\ssq \Slr_n(t+1)$ & $\frac{(n-1)}{(n+4)(n+2)}$ \\ \addlinespace
  (v) & $S_1,S_2 \ssq \Cl_n(t+1)$ separately  & $\frac{1}{(n+4)(n+2)}$ \\ \addlinespace
  (vi) & $S_1,S_2 \ssq \Cl_n(t+1)$ jointly  & $\frac{2}{(n+4)(n+2)}$ \\ \addlinespace 
  (vii) & $S_1,S_2 \ssq \Slr_n(t+1)$ separately & $\frac{(n-1)(n-3)}{(n+4)(n+2)}$ \\ \addlinespace
  (viii) & $S_1,S_2 \ssq \Slr_n(t+1)$ jointly & $\frac{4(n-1)}{(n+4)(n+2)}$ \\ \addlinespace  \bottomrule
 \end{tabular}
 \end{center}
 Here, we use the notation of Lemma~\ref{lemma:SlFate}. In addition, the attribute ``separately'' indicates that $S_1$ and $S_2$ remain in different connected components in $\Br'_n(t+1)$, and ``jointly'' means that they become part of a joint connected component. 
\end{lemma}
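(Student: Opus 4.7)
The plan is to extend the exploration-and-symmetry argument from the proof of Lemma~\ref{lemma:SlFate} to track two distinguished slings at once. Let $M$ denote the uniform matching of layer $t+1$ on $\mathbf{X}_t\cup\mathbf{X}_{t+1}$, and let $M''$ denote the partial matching on $\mathbf{X}_t$ that pairs the two endpoints $\alpha_i,\beta_i$ of each sling in $\Slr_n(t)$ (so that $v_\infty(t)$ is the unique $M''$-unmatched vertex of $\mathbf{X}_t$). Then $M\cup M''$ decomposes into paths whose endpoints lie in $\{v_\infty(t)\}\cup\mathbf{X}_{t+1}$ and cycles contained in $\mathbf{X}_t\setminus\{v_\infty(t)\}$. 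In these terms, $S_i\ssq\Soo'(t+1)$, $S_i\ssq\Cl_n(t+1)$, and $S_i\ssq\Slr_n(t+1)$ translate exactly to the $M''$-edge $\alpha_i\beta_i$ lying, respectively, on the path through $v_\infty(t)$, on a cycle, or on a path with both endpoints in $\mathbf{X}_{t+1}$; the ``jointly''/``separately'' distinction in (vi)--(viii) asks whether the two $M''$-edges sit on the same component of $M\cup M''$.

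The key technical input, generalising Lemma~\ref{lemma:SlFate}, is that if we explore $M\cup M''$ from a vertex $v$ by alternately revealing an $M$-edge and following the known $M''$-edge, then the first \emph{stopping target} hit is uniformly distributed over the currently available stopping targets. Here a stopping target is any vertex in $\bigl(\{v_\infty(t),\alpha_1,\beta_1,\alpha_2,\beta_2\}\cup\mathbf{X}_{t+1}\bigr)\setminus\{v\}$ that has not been revealed in an earlier exploration phase, other than the $M''$-partner of $v$. This uniformity holds because the unrevealed portion of $M$ remains conditionally uniform at every step, and the argument iterates: after one exploration phase terminates, a fresh phase is started from a still-unrevealed distinguished vertex, with the set of available stopping targets shrunk by the vertices already revealed.

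Each of the eight probabilities is then computed as a product of factors $\frac{1}{|T|}$, one per exploration phase, where $|T|$ is the number of available stopping targets at the start of that phase. As a representative, (vi) arises from two phases: starting from $\alpha_1$, one hits $\{\alpha_2,\beta_2\}$ with probability $\frac{2}{n+4}$, and then, starting from the other endpoint of $S_2$, one hits $\beta_1$ with probability $\frac{1}{n+2}$, giving $\frac{2}{(n+4)(n+2)}$ and closing the joint cycle. Cases (ii), (iv), (v) follow the same template with two or three phases; (iii), (vii), (viii) require more phases to force both sides of each $S_i\ssq\Slr_n$ to terminate in $\mathbf{X}_{t+1}$; and (i), (viii) additionally sum over the two possible orderings of $S_1, S_2$ along their common path. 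The main obstacle I foresee is careful bookkeeping between phases: when one endpoint of a sling (say $\alpha_1$) has been revealed in an earlier phase while its $M''$-partner $\beta_1$ has not, one must recognise that $\beta_1$ now acts as a new $v_\infty$-style terminator for subsequent phases, since hitting $\beta_1$ forces a connection back to whatever $\alpha_1$'s phase ended at (and hence changes the fate of whichever sling is being explored). A useful sanity check at the end is that the eight entries of the table, weighted by multiplicity $2$ for the asymmetric events (ii)--(iv), sum to $1$: indeed $8+4+4(n-1)+2(n-1)+1+2+(n-1)(n-3)+4(n-1) = n^2+6n+8 = (n+2)(n+4)$.
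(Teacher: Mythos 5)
Your approach is essentially the paper's: its proof of Lemma~\ref{lemma:Slfate2} runs exactly this kind of sequential exploration of the layer-$(t+1)$ matching together with the sling pairing, uses that the first stopping vertex reached is uniform among the not-yet-revealed stopping vertices, multiplies the per-phase probabilities, and sums over orderings where needed (it writes out lines (i), (iii), (viii), notes (vi) as analogous, and leaves the rest to the reader). Your phase structure and the consistency check $8+4+4(n-1)+2(n-1)+1+2+(n-1)(n-3)+4(n-1)=(n+2)(n+4)$ agree with the paper.

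One point in your formulation needs repair: in the stated uniformity claim you exclude ``the $M''$-partner of $v$'' from the stopping targets of a phase started at $v$. For a fresh phase started at a sling endpoint this is wrong: hitting the partner is precisely the event that this sling closes into a loop (possibly jointly with the intermediate slings traversed), so it must terminate the phase and must be counted among the targets, exactly as in the proof of Lemma~\ref{lemma:SlFate}. Your own computation for (vi) implicitly does include it --- starting from $\alpha_1$ you use $n+4$ targets, i.e.\ you count $\beta_1$, whereas the rule as written would give $n+3$ --- and the later phases needed for (ii), (iii), (iv) likewise require the partner: the second exploration from $\alpha_2$ must have $\beta_2$ among its $n$ relevant targets to produce the factors $\tfrac{1}{n}$ resp.\ $\tfrac{n-1}{n}$. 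The exclusion is only vacuously harmless in continuation phases, where the partner of the current vertex has already been revealed. With that correction, the per-phase products (and the ordering factor $2$ in (i) and (viii), together with the requirement in (viii) that the far endpoint of $S_1$ avoid $v_\infty(t)$) reproduce all eight entries of the table.
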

Observe that the table above covers all possible cases, except for reversing the roles of $S_1$ and $S_2$ in lines (ii)-(iv). We can then check that the probabilities sum to one. 
\begin{proof}
 We use the same exploration technique as previously. Again, the core of the proof consists in designing the right starting point and possible endpoints of the exploration, depending on the event whose probability needs to be computed. We show (i), (viii) and (iii), leaving the others to the reader. For (i), that is, the event $S_1,S_2 \ssq \Soo(t+1)$, we start exploring towards the right from $v_\infty(t)$, stopping when we reach either one of the $n$ vertices in level $t+1$ or one of the $4$ endpoints of $S_1$ and $S_2$ in level $t$. For both slings to become part of $\Soo(t+1)$, the exploration needs to reach one of those $4$ endpoints first among these $n+4$ vertices, after which it will follow along this sling and reach the other endpoint, and then among the remaining $n+2$ vertices, it needs to reach one of the two endpoints of the other slings first. From the uniformity of the matching, this happens with probability $\frac{8}{(n+4)(n+2)}$.
 An analogous argument proves line (vi) of the table, if we start the exploration from the top endpoint of $S_1$ instead. 
 
 For line (viii), start the exploration from that endpoint of $S_1$ that leads to $S_2$ before reaching a vertex of level $t+1$. 
 %\paul{We don't know a priori which of the two vertices of $S_1$ it is/ We have to say that one of them leads ro $S_2$, the other to level $(t+1)$.} 
 %\fabian{Isn't that exactly what I say in the sentence after the next one?}
 %\paul{Yes indeed. It is just a bit strangely written, since it seems that you already know beforehand which one leads to $S_2$.}
 Following along the exploration then gives a contribution of $\frac{2n}{(n+4)(n+2)}$, since after visiting $S_2$ any of the $n$ vertices in level $t+1$ is a desired target. However, we had two possibilities for the starting vertex, and the other endpoint of $S_1$ still needs to be matched to level $t+1$ as well; rather than connecting to $\Soo(t)$. This leads to an extra factor of $\frac{2(n-1)}{n}$. 
 
 All other lines in the table require an argument involving two different explorations performed after one another. We develop the proof of line (iii): we start again from $v_\infty(t)$ and stop upon reaching level $t+1$. To visit $S_1$ but not $S_2$ on this exploration happens with probability $\frac{2n}{(n+4)(n+2)}$. Then, $S_2$ becomes part of a larger sling if and only if, when we start a second exploration from its topmost endpoint, this exploration reaches one of the $n-1$ remaining vertices in level $t+1$ out of the $n$ relevant vertices (which additionally include the lower endpoint of $S_2$). This leads to another factor of $\frac{(n-1)}{n}$. 
 
 We omit writing out the remaining arguments for lines (ii), (iv), (v), and (vii), as they are entirely analogous. 
\end{proof}

\begin{lemma}\label{lemma:SooVar}
 Let $n=2m+1$ and let $U_t$, $V_t$ be two i.i.d. vertices in $\Br_n(t)$ chosen uniformly at random, independently of $\F_t$ (and possibly equal). Then
 \begin{multline}\label{eq:UtVt}
  \bP[U_t,V_t\in\Soo'(\infty)]\\
  =\left(\frac{2n+1}{3n}\right)^2 + \frac{1}{t+1}\cdot \frac{4(n-1)(n+2)(2n+1)}{135n^2} + O(t^{-2})
 \end{multline}
 for $t\to\infty$. 
\end{lemma}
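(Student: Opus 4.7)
The plan is to write
\[
 \bP[U_t,V_t\in\Soo'(\infty)] = \frac{1}{(t+1)^2}\sum_{L_1,L_2=0}^t q(L_1,L_2),
\]
with $q(L_1,L_2) := \bP[U,V\in\Soo'(\infty)\mid L(U)=L_1,\ L(V)=L_2]$ for $U,V$ uniform at their respective levels, then analyse $q$ by conditioning on the joint evolution up to level $\max(L_1,L_2)$. By symmetry I take $L_1\leq L_2$ and set $\Delta := L_2-L_1$, $\xi := \bigl(\tfrac{n-1}{n+2}\bigr)^\Delta$. The first step is to evolve $U$'s component from $L_1$ to $L_2$: by Lemma~\ref{lemma:SlFate}, conditional on $U$ being on a sling at level $L_1$ (probability $(n-1)/n$), at level $L_2$ this vertex is on $\Soo'(L_2)$, on a closed loop, or still on some sling $S_U\in\Slr_n(L_2)$ with probabilities $\tfrac23(1-\xi)$, $\tfrac13(1-\xi)$ and $\xi$ respectively; if $U$ is already on $\Soo'(L_1)$ (probability $1/n$) it stays on $\Soo'(L_2)$.

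The second ingredient is a long-run joint probability for pairs of distinct slings, namely
\[
 r := \bP[S_1,S_2\ssq \Soo'(\infty)\mid S_1,S_2\in\Slr_n(s),\ S_1\neq S_2],
\]
which by exchangeability and the Markov property is independent of $s$. I would obtain $r$ by the same one-step recursion used to derive \eqref{eq:23}: decomposing over the transitions of the next layer via Lemma~\ref{lemma:Slfate2}, only line~(vii) (both slings remain as separate slings) self-refers to $r$ with prefactor $(n-1)(n-3)/\bigl((n+4)(n+2)\bigr)$, while lines (i), (iii), (iii$'$) and (viii) feed into the single-sling long-run value $2/3$. Solving the resulting linear equation should yield $r=8/15$, independent of $n$, so that the covariance of the $\Soo'(\infty)$-indicators of two distinct slings at the same level is $r-\bigl(\tfrac23\bigr)^2 = \tfrac4{45}$.

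Third, I would assemble $q(L_1,L_2)$ at level $L_2$ by conditioning on the position of $U$ (on $\Soo'(L_2)$, on a closed loop, or on $S_U\in\Slr_n(L_2)$) and on where $V$ sits (on $\Soo'(L_2)$, on the same sling $S_U$, or on a distinct sling $S_V$), and feeding these into \eqref{eq:23} and $r=8/15$. This expresses $q(L_1,L_2)-p_1^2$ as a linear combination of $\xi$ and $\xi^2$ with explicit $n$-dependent coefficients. The standard double-to-single sum manipulation
\[
 \frac{1}{(t+1)^2}\sum_{L_1,L_2=0}^t \bigl[q(L_1,L_2)-p_1^2\bigr] = \frac{1}{t+1}\Bigl[\gamma(0)+2\sum_{\Delta\geq1}\gamma(\Delta)\Bigr]+O(t^{-2}),
\]
with $\gamma(\Delta)$ the $\Delta$-dependent part of $q(L_1,L_1+\Delta)-p_1^2$ (boundary contributions from $L_1,L_2$ near $0$ or $t$ being absorbed into $O(t^{-2})$), then reduces to summing two convergent geometric series with ratios $\xi$ and $\xi^2$. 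Evaluating these and simplifying should yield the constant $\tfrac{4(n-1)(n+2)(2n+1)}{135n^2}$.

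The main obstacle will be the subcase analysis at level $L_2$ where $V$ lands on the same sling $S_U$ as $U$: this requires the probability that a uniform vertex at level $L_2$ lies on $S_U$, which is proportional to the (random) number of vertices of $S_U$ at level $L_2$ and goes beyond the endpoint-only description used in Lemma~\ref{lemma:SlFate}. Once this contribution is identified, the remaining simplification is a finite but laborious computation in which I expect the factor $\tfrac4{45}=r-\bigl(\tfrac23\bigr)^2$ to appear inside the target constant $\tfrac{4(n-1)(n+2)(2n+1)}{135n^2} = \tfrac{4(n+2)}{15}\,p_1(1-p_1)$, multiplied by a geometric-sum factor of order $(n+2)/3$ coming from $\sum_\Delta \xi^\Delta$.
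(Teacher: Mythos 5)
Your plan is essentially the paper's proof: condition on the two levels, evolve the sling of the vertex at the lower level across the $\Delta$ intervening layers via Lemma~\ref{lemma:SlFate}, reduce the interacting case to the two-sling constant $\frac{8}{15}$ obtained from the table of Lemma~\ref{lemma:Slfate2} (your one-step fixed-point equation for $r$ is just a resummation of the geometric series the paper evaluates to obtain \eqref{eq:815}), and finally sum the triangular level-difference weights against a geometric sequence in $\Delta$. Carried through, this reproduces \eqref{eq:p_I}, \eqref{eq:p_II} and the constant in \eqref{eq:UtVt}, so the route is correct in outline and not genuinely different from the paper's.

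The one step you flag as the ``main obstacle'', however, rests on a misconception and is in fact immediate. The vertex $V$ is uniform on $\mathbf{X}_{L_2}$, and $L_2$ is the rightmost level of $\Br'_n(L_2)$: every vertex there lies in exactly one edge (the one from layer $L_2$), hence has degree one and is an endpoint of its component. Consequently $\mathbf{X}_{L_2}$ consists of $v_\infty(L_2)$ together with exactly two endpoints for each of the $m$ slings of $\Slr_n(L_2)$; in particular $S_U$ meets $\mathbf{X}_{L_2}$ in exactly its two endpoints, no matter how many vertices it carries in earlier levels. So $\bP[V\in S_U]=\frac{2}{n}$ exactly (and $\frac{1}{n}$, $\frac{n-3}{n}$ for $v_\infty(L_2)$ and a distinct sling), with no size-biasing by the interior of $S_U$, and whether $V\in\Soo'(\infty)$ is decided purely by which endpoint $V$ is, via \eqref{eq:23} and the constant $\frac{8}{15}$; weighting by the total number of vertices of $S_U$ would give a wrong constant. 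A second, minor, correction: since only the lower-level vertex's sling has to survive the $\Delta$ layers (the other vertex sits at the top level), $q(L_1,L_2)-p_1^2$ is proportional to $\xi=\left(\frac{n-1}{n+2}\right)^\Delta$ alone -- no $\xi^2$ term appears, compare the explicit form \eqref{eq:p_II} -- so the final evaluation is a single geometric series, exactly as in the paper.
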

\begin{proof}
 We first fix $i,j \geq 0$ and condition on the event that $U_t$ and $V_t$ are vertices from levels $i$ and $j$, respectively.  In other words, we condition on the event $E_{i,j}=\left\{ L(U_t)=i\text{ and }L(V_t)=j\right\}$. Recall that $v_\infty(i)$ and $v_\infty(j)$ denote the right endpoint of the transverse string in $\Br'_n(i)$ and $\Br'_n(j)$, respectively.
 If $U_t$ (resp. $V_t$) lies on a sling in $\Slr_n(i)$ (resp. $\Slr_n(j)$), then we denote this sling by $S_U$ (resp. $S_V$). We distinguish two cases:
 
 \emph{Case I: $i=j$.} We first cover the scenarios in which $U_t$ and $V_t$ are not placed on different slings in $\Slr_n(i)$: first, $\bP[U_t=V_t=v_\infty(i)|E_{i,i}]=\frac{1}{n^2}$ in which case both vertices already are on the transverse string. Secondly, $\bP[U_t=v_\infty(i)\neq V_t|E_{i,i}]=\frac{n-1}{n^2}$, and if this happens then $V_t$ will be part of $\Soo'(\infty)$ if and only if $S_V\ssq\Soo'(\infty)$, which occurs with probability $\frac{2}{3}$ by \eqref{eq:23}. The event $U_t\neq v_\infty(i)=V_t$ is treated in the same way. Thirdly, $\bP[S_U=S_V, U_t\neq v_\infty(i)\neq V_t|E_{i,i}]=\frac{2(n-1)}{n^2}$, and once again the two vertices will be incorporated to $\Soo'(\infty)$ if and only if their joint sling will, so with probability $\frac{2}{3}$. In total, this yields
 \begin{multline*}
  \varrho_1:=\bP[U_t,V_t\in\Soo'(\infty) \text{ and }\\ 
  U_t,V_t \text{ are not on different slings in }\Slr_n(i)|E_{i,i}]=\frac{8n-5}{3n^2}.
 \end{multline*}

 For the remaining scenario $S_U\neq S_V$ (that is, the two vertices are placed on different slings in $\Slr_n(i)$), we have 
 \[
  \varrho_2:=\bP[U_t\neq v_\infty(i)\neq V_t\text{ and }S_U\neq S_V|E_{i,i}]=1-\frac{4n-3}{n^2}.
 \]
 (Observe that $\varrho_2=0$ for $n=3$, in which case the following discussion until \eqref{eq:815} does not influence the result, as it should be). Note that the event where $U_t$ and $V_t$ become vertices of $\Soo'(\infty)$ is now the event that eventually $S_U,S_V\ssq\Soo'(\infty)$ holds.  
 Out of all cases listed in Lemma~\ref{lemma:Slfate2}, only some are relevant here: Notably, line (i) gives the probability of $S_U,S_V\ssq\Soo'(i+1)$. We also require line (iii) in case one of the two slings becomes part of $\Soo'(i+1)$, and line (viii) in case $S_U$ and $S_V$ become parts of a joint sling in $\Slr_n(i+1)$. For both of these cases, the remaining sling is then incorporated to $\Soo'(\infty)$ with probability $\frac{2}{3}$, by \eqref{eq:23}. Thus, they become part of $\Soo'(\infty)$ with probability $\mathbf{P}[S_1,S_2 \ssq \Slr_n(t+1)$ separately$]$ $+\varrho_3$, where 
 \begin{align*}
  \varrho_3:={}& \frac{8}{(n+4)(n+2)}+\frac{2}{3}\left(\frac{2\cdot 2(n-1)}{(n+4)(n+2)} + \frac{4(n-1)}{(n+4)(n+2)}\right)\\
  ={}& \frac{16n+8}{3(n+4)(n+2)}.
 \end{align*}
 
 It remains to discuss the case where $S_U$ and $S_V$ remain in separate slings of $\Slr_n(i+1)$. Denote by $D(S_U,S_V)$ the first $d\geq 1$ such that $S_U$ and $S_V$ are not in separate slings of $\Slr_n(i+d)$. By line (vii) in the table of Lemma~\ref{lemma:Slfate2}, we have 
 \[
  \bP[D(S_U,S_V)\geq d\mid E_{i,i}, S_U \neq S_V] 
  = \left(\frac{(n-1)(n-3)}{(n+4)(n+2)}\right)^{d-1}.
 \]

%\paul{Say that we work conditionally on $E_{i,i}$ and $S_U \neq S_V$.} 
%\fabian{Fixed. Also in the computation just below}
 
 Conditioning on $D(S_U,S_V)$, we can repeat the argument for all cases contributing to $\varrho_3$ above, to obtain 
 \begin{align}\label{eq:p_I}
  p_I:={}& \bP[U_t,V_t\in \Soo'(\infty)\mid E_{i,i}]\notag \\
   ={}& \varrho_1 + \varrho_2 \sum_{d=1}^\infty \varrho_3\bP[D(S_U,S_V)\geq d\mid E_{i,i}, S_U \neq S_V]\notag \\
   ={}& \frac{8n-5}{3n^2} + \left(1-\frac{4n-3}{n^2}\right)\sum_{d=1}^\infty \frac{16n+8}{3(n+4)(n+2)}\left(\frac{(n-1)(n-3)}{(n+4)(n+2)}\right)^{d-1} \notag \\
   ={}& \frac{8n-5}{3n^2} + \left(1-\frac{4n-3}{n^2}\right)\cdot \frac{8}{15}\notag \\
   ={}& \frac{8n^2+8n-1}{15n^2}.
 \end{align}
 As a consequence, we point out the useful observation from the second-to-last line in \eqref{eq:p_I} that 
 \begin{equation}\label{eq:815}
  \bP_t[S_1,S_2\ssq \Soo'(\infty)]=\frac{8}{15}
 \end{equation}
 for any two distinct slings $S_1,S_2$ in $\Slr_n(t)$. 
 
 \emph{Case II: $i>j$.} (The case $i<j$ can be treated by merely switching the roles of $U_t$ and $V_t$.) We write $k=i-j$. We note first that if $D(V_t)\leq k$, where we recall that $D(V_t)$ is the minimum $d$ such that $V_t$ belongs to either the transverse string or a closed component in $B_n(L(V_t)+d)$, then whether $U_t$ ends up on $\Soo'(\infty)$ is independent from what happens to $V_t$. Accordingly, by the earlier computations based on \eqref{eq:pApB}, conditionally on $E_{i,j}$, the event $D(V_t)\leq k$ occurs with probability 
 \[
  \frac{1}{n}+\varrho_4(k):=\frac{1}{n} + \frac{n-1}{n}\sum_{d=1}^k \frac{3}{n+2} \left(\frac{n-1}{n+2}\right)^{d-1} = 1-\frac{n-1}{n}\left(\frac{n-1}{n+2}\right)^k
 \]
 where $\varrho_4(k) = \bP\left[V_t\neq v_\infty(j), D(V_t)\leq k| E_{i,j}\right]$. On the other hand, if $D(V_t)>k$ then $V_t$ belongs to a sling of $\Slr_n(t)$. Furthermore, with probability $\frac{1}{n}$ the vertex $U_t$ is the endpoint of $\Soo'(i)$, with probability $\frac{2}{n}$ it is one of the endpoints of the sling containing $V_t$ in $\Slr_n(i)$, and with probability $\frac{n-3}{n}$ it belongs to a different sling than $V_t$ in $\Slr_n(i)$. Combining this with equations \eqref{eq:2n+1_3n}, \eqref{eq:23}, and \eqref{eq:815} yields 
 \begin{align}\label{eq:p_II}
  p_{II,k}&:=\bP[U_t,V_t\in\Soo'(\infty)|E_{j+k,j}]\notag \\
   &\ = \frac{2n+1}{3n^2} + \varrho_4(k)\frac{4n+2}{9n} + \left(1-\frac{1}{n}-\varrho_4(k)\right)\left(\frac{2}{3n} + \frac{4}{3n} + \frac{8(n-3)}{15n}\right) \notag \\
   &\ = \left(\frac{2n+1}{3n}\right)^2 + \frac{4n^2 +4n-8}{45n^2}\left(\frac{n-1}{n+2}\right)^k.
 \end{align}
 
 \emph{Combining cases I and II.} Since $U_t$ and $V_t$ are chosen independently and uniformly among the vertices in $\Br'_n(t)$, the random variable $L(U_t)-L(V_t)$ has a discrete triangular distribution supported on $\{-t,\dots,-1,0,1,\dots,t\}$, i.e.
 \[
  \bP[L(U_t)-L(V_t)=k] = \frac{t+1-|k|}{(t+1)^2}.
 \]
 Hence, by the law of total probability, \eqref{eq:p_I}, and \eqref{eq:p_II} we obtain: 
 \begin{align*}
  \MoveEqLeft\bP[U_t,V_t\in\Soo'(\infty)]\\
  &=\frac{p_I}{t+1} + 2 \sum_{k=1}^t \frac{t+1-k}{(t+1)^2} \cdot p_{II,k}\\
  &\begin{multlined}
  = \frac{1}{t+1} \frac{8n^2+8n-1}{15n^2} + \left(\frac{2n+1}{3n}\right)^2 \cdot 2 \sum_{k=1}^t \frac{t+1-k}{(t+1)^2} \\
     \qquad  + 2\cdot \frac{4n^2+4n-8}{45n^2}\sum_{k=1}^t \frac{t+1-k}{(t+1)^2}\left(\frac{n-1}{n+2}\right)^k
  \end{multlined}
 \end{align*}
 Using the summation formula
 \[
  \sum_{k=1}^t \frac{t+1-k}{(t+1)^2}\cdot q^k = \frac{tq-tq^2+q^{t+2}-q^2}{(t+1)^2(1-q)^2} = \frac{1}{t+1}\frac{q}{1-q} + O(t^{-2})
 \]
 as $t\to\infty$ for $q=\frac{n-1}{n+2}$, we arrive at 
 \begin{multline*}
  \bP[U_t,V_t\in\Soo'(\infty)]=\left(\frac{2n+1}{3n}\right)^2\\
  + \frac{1}{t+1}\left(\frac{8n^2+8n-1}{15n^2}-\frac{4n^2+4n+1}{9n^2}\right.
   \left. + \frac{8n^3-24n+16}{135n^2}\right) + O(t^{-2})
 \end{multline*}
 which simplifies to the expression given in \eqref{eq:UtVt}.
\end{proof}

The next lemma gives a criterion for the uniform integrability of a process with regenerative increments.

\begin{lemma}\label{lemma:SooUI}
 Let $\{Z(t)\}_{t\geq0}$ be a process with regenerative increments over integer renewal times $T_0=0<T_1<T_2<\dots$, with $Z(0)=0$. Suppose all moments of $T_1$ and $\bE[Z(T_1)]/\bE[T_1]$ are finite. Assume further that the random variable  
 \begin{equation*}
  M_1=\sup_{0\leq t\leq T_1} \left|Z(t)\right|
 \end{equation*}
 has finite $r$-th moments for all $1\leq r<\infty$. Then
 \[
  \left\{ \left|\frac{Z(t)-\frac{\bE[Z(T_1)]}{\bE[T_1]}t}{\sqrt{t}}\right|^r \right\}_{t\in\IN}
 \]
 is a uniformly integrable sequence of random variables. 
\end{lemma}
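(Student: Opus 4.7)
The plan is to prove the stronger claim that $\sup_t \bE[|X_t|^{r'}] < \infty$ for every finite $r' \geq 1$, where I write $X_t := (Z(t) - \mu t)/\sqrt{t}$ with $\mu := \bE[Z(T_1)]/\bE[T_1]$. By the de la Vallée--Poussin criterion, boundedness in $L^{r+1}$ implies uniform integrability of $\{|X_t|^r\}_{t\in\IN}$, so this suffices.

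First I would split
\[
 Z(t) - \mu t = \sum_{j=1}^{N(t)} W_j + \bigl(Z(t)-Z(T_{N(t)})\bigr) - \mu\bigl(t-T_{N(t)}\bigr),
\]
setting $W_j := \bigl(Z(T_j)-Z(T_{j-1})\bigr) - \mu R_j$ with $R_j := T_j - T_{j-1}$. By the regenerative-increments property, the triples $(R_j,W_j,M_j)$, with $M_j := \sup_{T_{j-1}\leq s\leq T_j}|Z(s)-Z(T_{j-1})|$, are i.i.d.\ in $j$; in particular the $W_j$ are i.i.d., bounded by $M_j+\mu R_j$, and therefore have moments of all orders. By the choice of $\mu$ we have $\bE[W_1]=0$, so $S_n:=\sum_{j=1}^n W_j$ is a martingale. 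Since $T_j\geq j$ forces $N(t)\leq t$, Doob's maximal inequality combined with Rosenthal's inequality yields
\[
 \bE\bigl[|S_{N(t)}|^{r'}\bigr] \leq \bE\Bigl[\max_{n\leq t}|S_n|^{r'}\Bigr] \leq C_{r'}\, t^{r'/2}
\]
for every $r'\geq 2$; the range $r'\in[1,2)$ then follows from Jensen's inequality applied to $x\mapsto x^{r'/2}$.

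For the two boundary terms, I would use the deterministic bounds $|Z(t)-Z(T_{N(t)})|\leq M_{N(t)+1}$ and $|t-T_{N(t)}|\leq R_{N(t)+1}$. Since the cycle straddling $t$ is size-biased, these quantities are not distributed as $M_1$ or $R_1$. The plan is to decompose by cycle, using the independence of $T_{j-1}$ from $(R_j,M_j)$, to obtain for any nonnegative $\phi$
\[
 \bE\bigl[\phi(R_{N(t)+1},M_{N(t)+1})\bigr] = \sum_{k\geq1} \bE\bigl[\phi(R_1,M_1)\one\{R_1=k\}\bigr]\bigl(U(t)-U(t-k)\bigr),
\]
where $U$ is the renewal function of the $T_j$. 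The elementary estimate $U(t)-U(t-k)\leq U(k)\leq C(1+k)$ then, together with Cauchy--Schwarz and the hypothesised moments of $R_1$ and $M_1$, gives
\[
 \sup_t \bigl(\bE[M_{N(t)+1}^{r'}] + \bE[R_{N(t)+1}^{r'}]\bigr) < \infty,
\]
so the boundary contributions to $\bE[|X_t|^{r'}]$ are of order $O(t^{-r'/2})$, which is negligible.

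Combining the three estimates yields $\sup_t \bE[|X_t|^{r'}] < \infty$ for every finite $r'$, which concludes the plan. I expect the main technical obstacle to be precisely this size-biased analysis of the straddling cycle: a priori $R_{N(t)+1}$ and $M_{N(t)+1}$ can be stochastically larger than a typical $R_1$ or $M_1$, so one cannot simply quote the i.i.d.\ moment bounds, and the size-biased identity together with the linear bound on $U(k)$ is exactly what is needed to bypass this. Once that step is in hand, the martingale estimate on $S_{N(t)}$ via Doob and Rosenthal is routine.
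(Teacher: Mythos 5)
Your proof is correct, and it takes a genuinely different route from the paper's. The paper also reduces uniform integrability to a uniform moment bound (for all sufficiently large $r$, via \cite[Theorem~5.4.1]{Gut13}), but then applies Minkowski to the three terms $Z(t)-Z(T_{N(t)})$, $Z(T_{N(t)})-\bE[Z(T_1)]N(t)$, and $\bE[Z(T_1)]\big(N(t)-t/\bE[T_1]\big)$: the overshoot is handled by the crude bound $\bE\big[|Z(t)-Z(T_{N(t)})|^r\big]\leq (t+1)\bE[M_1^r]$, the centered cycle sum by the Marcinkiewicz--Zygmund maximal inequality, and the last term by quoting uniform integrability of $t^{-1/2}\big(N(t)-t/\bE[T_1]\big)$ from \cite{CHL79}. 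By centering each cycle increment by $\mu R_j$, you make the leading term a single mean-zero i.i.d.\ sum $\sum_{j\leq N(t)}W_j$ and thereby absorb the fluctuation of $N(t)$ into the walk, so you never need the external result on the renewal counting process; Doob plus Rosenthal plays the role of Marcinkiewicz--Zygmund, and your argument is correspondingly more self-contained. Your size-biased treatment of the straddling cycle (the identity with $U(t)-U(t-k)\leq U(k)\leq k+1$, then Cauchy--Schwarz) is correct and even yields uniform-in-$t$ moment bounds on $R_{N(t)+1}$ and $M_{N(t)+1}$, but it is more than is needed: since these boundary terms carry a factor $t^{-r'/2}$, the cruder bound $\bE\big[M_{N(t)+1}^{r'}\big]\leq\sum_{j=0}^{t}\bE[M_{j+1}^{r'}]=(t+1)\bE[M_1^{r'}]$ (valid because $N(t)\leq t$) already suffices, and this is exactly the paper's shortcut. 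Two cosmetic points: bound $|W_j|\leq M_j+|\mu|R_j$, since $\mu$ need not be nonnegative, and note that for the stated conclusion you only need $r'=r+1\geq 2$, so the Jensen step covering $r'\in[1,2)$ is superfluous.
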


%\paul{Why do you say "for all $j$" since they are i.i.d.?}
%\fabian{No reason, change it to the condition for $j=1$ if you want to}
%
%\paul{Where in the proof do we use the fact that the moments of $T_1$ are finite?}
%\fabian{You need it to invoke the theorem from \cite{CHL79}}

\begin{proof}
 For brevity, we write $X_j=Z(T_j)-Z(T_{j-1})$, where $j\geq 1$. Note that this is an i.i.d. sequence of random variables, with $X_1=Z(T_1)$.
 
 Since $r$ is arbitrary, it suffices by \cite[Theorem~5.4.1]{Gut13} to show that the family of random variables $t^{-1/2}\big(|Z(t)-\frac{\bE[X_1]}{\bE[T_1]}t|\big)$ is uniformly bounded in $L^r$, for all sufficiently large $r$.  
 
 Recall that, for all $t \geq 1$, $N(t)=\max\{j\geq 0:T_j\leq t\}$. By Minkowski's inequality, we have  
 \begin{multline}\label{eq:SooUIproof}
  \left\|\frac{1}{\sqrt{t}}\left(Z(t)-\frac{\bE[X_1]}{\bE[T_1]}t\right)\right\|_r 
  \leq \frac{1}{\sqrt{t}}\Big\| Z(t)-Z(T_{N(t)})|\Big\|_r \\
   + \frac{1}{\sqrt{t}} \Big\| Z(T_{N(t)}) - \bE[X_1]N(t) \Big\|_r 
   + \bE[X_1]\left\|\frac{N(t)-t/\bE[T_1]}{\sqrt{t}} \right\|_r   . 
 \end{multline}
 
% \paul{Isn't it just triangular inequality?}
% \fabian{Yes, but the triangular ineq. for $L^p$ norms is often (typically?) referred to as Minkowski's inequality -- presumably to avoid confusion with the triangular inequality for integrals :) }
 
 For the first summand, note that $N(t)\leq t$ since the renewal intervals have lengths of at least 1, and consider 
 \begin{multline*}
  \bE\left[\left|Z(t)-Z(T_{N(t)})\right|^r\right] 
  = \bE\left[\sum_{j=0}^t \mathds{1}\{N(t)=j\} \left|Z(t)-Z(T_j)\right|^r \right]\\
  \leq \bE\left[ \sum_{j=0}^t \sup_{T_j\leq u\leq T_{j+1}} \left|Z(u)-Z(T_j)\right| ^r \right]
  = \bE\left[ \sum_{j=0}^t M_{j+1}^r\right] 
  = (t+1) \bE[M_1^r].
 \end{multline*}
 Hence, 
 \[
  \frac{1}{\sqrt{t}}\Big\| Z(t)-Z(T_{N(t)})|\Big\|_r \leq t^{-1/2} (t+1)^{1/r} \|M_1\|_r
 \]
 which is uniformly bounded over $t\geq 1$ for $r\geq 2$. 
 
 For the second term on the right-hand side of \eqref{eq:SooUIproof}, we use telescoping to write $Z(T_{N(t)})=\sum_{i=1}^{N(t)}X_i$, which is a sum of i.i.d. random variables. Together with $N(t)\leq t$, we obtain for $r\geq2$
 \begin{equation}\label{eq:SooUIDoob}
  \frac{1}{\sqrt{t}} \Big\| Z(T_{N(t)}) - \bE[X_1]N(t) \Big\|_r 
  \leq \frac{1}{\sqrt{t}} \left\| \sum_{i=1}^{N(t)} (X_i - \bE[X_i]) \right\|_r 
  \leq \frac{1}{\sqrt{t}} \left\| \max_{k\leq t} \left|\sum_{i=1}^k (X_i-\bE[X_i]) \right|\right\|_r.
 \end{equation}
Observe that $t \mapsto \sum_{i=1}^t (X_i-\bE[X_i])$ forms a martingale with respect to the natural filtration of $\{Z(t)\}_{t\geq0}$. Hence, we can use Doob's maximal inequality, \cite[Theorem~10.9.4]{Gut13}, on the right-hand side of \eqref{eq:SooUIDoob} to obtain
 \begin{align*}
  \frac{1}{\sqrt{t}} \Big\| Z(T_{N(t)}) - \bE[X_1]N(t) \Big\|_r 
  &\leq \frac{1}{\sqrt{t}}\cdot \frac{r}{r-1} \left\| \sum_{i=1}^t (X_i-\bE[X_i]) \right\|_r\\
  &\leq \frac{1}{\sqrt{t}}\cdot \frac{r}{r-1} \left( B_r t^{r/2} \bE\left[\big|X_1-\bE[X_1]\big|^r \right] \right)^{1/r}
 \end{align*}
 where the final inequality follows from the Marcinkiewicz-Zygmund inequalities with a constant $B_r$ only depending on $r$, cf. \cite[Corollary~3.8.2]{Gut13}. Simplifying the right-hand side, we obtain
  \[
  \frac{1}{\sqrt{t}} \Big\| Z(T_{N(t)}) - \bE[X_1]N(t) \Big\|_r 
  \leq \frac{r}{r-1}B_r^{1/r} \left\| X_1 - \bE[X_1] \right\|_r
 \]
 which is a uniform bound. 
 
 The final term on the right-hand side of \eqref{eq:SooUIproof} is uniformly bounded too, since  
 \[
  \left|\frac{N(t)-t/\bE[T_1]}{\sqrt{t}}\right|^r
 \]
 is uniformly integrable by \cite[Theorem 1]{CHL79} (where we applied the theorem for $\alpha=0$). We thus conclude the desired boundedness for the right-hand side of \eqref{eq:SooUIproof}, which finishes the proof.
\end{proof}

\begin{proof}[Proof of Theorem~\ref{theorem:Soo}, central limit theorem]
 We already established the validity of a central limit theorem for $|\Soo''(t)|$ with Theorem~\ref{theorem:regCLT} and it remains to determine the term for the variance. Thus let $X$ be $N(0,1)$-distributed. Since $Z(t)=|\Soo''(t)|$ satisfies all conditions for Lemma~\ref{lemma:SooUI} (cf. the discussion below Theorem~\ref{theorem:regCLT}), setting $r=2$ implies 
 \begin{equation}\label{eq:Varconvc}
  \bVar\left[\frac{|\Soo''(t)|-\frac{2n+1}{3}t}{\sqrt{c^2t}}\right] \to \bVar[X]=1
 \end{equation}
 as $t\to\infty$, for a suitable constant $c^2$, where $c^2$ is the variance in Theorem~\ref{theorem:regCLT}. At the same time, observe that 
 \begin{align*}
  \frac{\bE[|\Soo''(t)|^2]}{n^2(t+1)^2} 
   &= \frac{1}{n^2(t+1)^2}\sum_{u,v}\bE[\mathds{1}\{u,v\in\Soo''(t)\}]\\ 
   &= \bP[U_t,V_t\in\Soo'(\infty)],
 \end{align*}
 where $U_t$ and $V_t$ are independently chosen random vertices. By virtue of Lemma~\ref{lemma:SooVar} and \eqref{eq:2n+1_3n}, we thus obtain 
 \begin{align*}
  \bVar\left[\frac{|\Soo''(t)|-\frac{2n+1}{3}t}{\sqrt{t}}\right] 
  &= \frac{1}{t}\left(\bE[|\Soo(t)''|^2] - \bE[|\Soo(t)''|]^2\right) \\
  &= \frac{4(n-1)(n+2)(2n+1)}{135} + O(t^{-1}).
 \end{align*}
 Comparing this to \eqref{eq:Varconvc} provides the desired value of $c^2$, and concludes the proof of convergence in distribution for \eqref{eq:SooCLT}. Convergence of moments now follows from the uniform integrability that was established in Lemma~\ref{lemma:SooUI}.
\end{proof}

\subsection*{The shape of the transverse string} 
This subsection is dedicated to the proof of Theorem~\ref{theorem:Soolocal}. 

\begin{proof}[Proof of Theorem~\ref{theorem:Soolocal}] \emph{(i)}
We want to investigate the number of vertices of $\Soo(\infty)$ at a given level $t$. Consider an exploration of the transverse string $\Soo(\infty)$ starting from level 0. Suppose $\tau\leq t$ and enumerate the vertices in $\Soo(\infty)\cap \mathbf{X}_t$ according to the order in which they are reached by the exploration of $\Soo(\infty)$, say by $v_1=v_\infty(t),v_2,\dots,v_r=w_\infty(t)$; where $r\leq n$ is odd. 
 
 Recall from the construction of $\Br_n(t)$ that $\Pi_u$ denotes the random uniform matching in layer $u$. Whether or not an exploration started in $v\in\mathbf{X}_t$ to the right eventually returns to level $t$ or not depends only on $\{\Pi_s, s \geq t+1\}$. In particular, $w_\infty(t)$ is independent of $\left\{\Pi_s, 1\leq s\leq t\right\}$ and is uniform on $\mathbf{X}_t$.
%\paul{Maybe explain a bit why? The case $v_\infty=w_\infty$ is maybe not fully clear. We kind of use the uniqueness of the infinite string "coming from the right".}
%\fabian{Rephrased the previous bit to address the first question, but don't understand what you mean by ``The case $v_\infty=w_\infty$...''}
%\paul{What I mean is, we should justify that $w_\infty(t)$ is uniform.}
  Moreover, conditioned on $v_1,\dots,v_{2i}$, the random vertex $v_{2i+1}$ is independent of $\left\{\Pi_s, s \geq t+1\right\}$ and conditioned on $v_1,\dots,v_{2i-1}$, the random vertex $v_{2i}$ is independent of $\left\{\Pi_s, 1\leq s\leq t\right\}$. 
  
  We can rewrite the event $\{V_t=2j+1\}$ as $\{v_1,\dots,v_{2j}\neq w_{\infty}(t), v_{2j+1}=w_\infty(t)\}$. By the aforementioned conditional independencies, and since the involved conditional distributions are uniform, we hence obtain for $0\leq j\leq m$:
 \begin{align*}
  \bP[V_t=2j+1]
  %&= \bP[v_1,\dots,v_{2j}\neq w_\infty(t), v_{2j+1}=w_\infty(t)]\\
  &= \bP[v_{2j+1}=w_\infty(t)\mid v_1,\dots,v_{2j}\neq w_\infty(t)]\\
  &\qquad \cdot\prod_{i=0}^{j-1} \bP[v_{2i+1}\neq w_\infty(t)\mid v_1,\dots,v_{2i}\neq w_\infty(t)]\\
  &= \frac{1}{2m+1-2j} \prod_{i=0}^{j-1} \frac{2m-2i}{2m+1-2i}
 \end{align*}
 which simplifies to \eqref{eq:Vdist}. We remark that it is easy to confirm that the distribution given in \eqref{eq:Vdist} is truly a probability distribution on $\{1,3,\dots,2m+1\}$ -- induction over $m$ suffices. In the same way, one finds that $\bE[V]=\frac{4m+3}{3}=\frac{2n+1}{3}$, and thus Theorem~\ref{theorem:Soolocal} can be used to give an alternative proof of \eqref{eq:SooLLN}.

 \emph{Proof of Theorem~\ref{theorem:Soolocal} (ii)}
We now consider the edges of $\Soo(\infty)$ connecting two fixed consecutive levels of $\Br'_n$. For integers $j,k,\ell\in\{0,\dots,m\}$, we show that 
 \begin{equation}\label{eq:Soolocalproof}
  \bP[V_t=2j+1, V_{t+1}=2k+1, E_t=2\ell+1] = \binom{j}{\ell}\binom{k}{\ell}\frac{\fdf{2m}_j \fdf{2m}_k}{\fdf{4m+1}_{j+k+1}}. 
 \end{equation}
 The distribution for $E_t$ in \eqref{eq:Edist} is then obtained by summing over all values of $j$ and $k$.
 
 To obtain \eqref{eq:Soolocalproof}, consider an exploration of $\Soo(\infty)$ starting in $\mathbf{X}_0$. Label the vertices in $\Soo(\infty)\cap \mathbf{X}_t$ by $v_1,v_2,\dots$ in the order in which they are visited by the exploration, and label the vertices in $\Soo(\infty)\cap \mathbf{X}_{t+1}$ analogously by $w_1,w_2,\dots$. We say that a right-transition (resp. left-transition) occurs whenever the exploration goes from $\mathbf{X}_t$ to $\mathbf{X}_{t+1}$ (resp. vice-versa). We will compute the number of matchings on $\mathbf{X}_t \cup \mathbf{X}_{t+1}$ such that $V_t=2j+1, V_{t+1}=2k+1, E_t=2\ell+1$.
 
 For parity reasons, right-transitions only occur after a vertex $v_i$ with odd $i$, whereas left-transitions only occur after a vertex $w_i$ with even $i$. The event $E=2\ell+1$ requires the exploration to make $\ell+1$ right- and $\ell$ left-transitions in layer $t$; however, for $V_t=2j+1$ we also need $v_{2j+1}=w_\infty(t)$, and hence the right-transition after $v_{2j+1}$ is forced. Hence, the endpoints on $\mathbf{X}_t$ of the remaining $\ell$ right-transitions are to be chosen among $v_1, v_3, \ldots, v_{2j-1}$, contributing the factor $\binom{j}{\ell}$ in \eqref{eq:Soolocalproof}. An analogous argument for left-transition yields the factor $\binom{k}{\ell}$.
 These instants for transitions being chosen, we now choose the points of $\mathbf{X}_{t} \cap \mathbf{X}_{t+1}$ that are explored. That is, we want a list of points of $\mathbf{X}_{t} \cap \mathbf{X}_{t+1}$ that respects the relative order of the $v_i$ and $w_i$, starts with $v_1$, ends with $w_{2k+1}$ and respects the instants of transitions. We wil call a list satisfying these properties a \emph{merging}.
 
 For the exploration to realize a merging $M$, all transitions along edges in layer $t$ must lead to the correct level (either $\mathbf{X}_t$ or $\mathbf{X}_{t+1}$). For any $i$, the vertex $v_{2i}$ can be chosen among the $2m-2i+2$ vertices in $\mathbf{X}_t\setminus\{v_1,\dots,v_{2i-1}\}$. For any $i'$, the vertex $w_{2i'+1}\neq w_\infty(t+1)$ can be chosen among the $2m-2i'$ vertices in $\mathbf{X}_{t+1}\setminus\{w_1,\dots,w_{2i'},w_\infty(t+1)\}$ (note that $w_\infty(t)$ is independent of $\{\Pi_s, s\leq t\}$, and that the unmentioned vertices $v_{2i+1}$ and $w_{2i'}$ are independent of $\Pi_t$ conditioned on the previous history of the exploration). Any given merging $M$ satisfying the previous conditions imposes exactly $j+k+1$ edges in layer $t$. Therefore, we get
 \[
  \bP[\text{exploration realizes $M$}] = \frac{\fdf{2m}_j \fdf{2m}_k}{\fdf{4m+1}_{j+k+1}}
 \]
 and \eqref{eq:Soolocalproof} follows. 
 
%\paul{I am still not very happy about the previous two paragraphs. I will try to find a compromise.} 
%\fabian{Go ahead}
 
 \emph{Proof of Theorem~\ref{theorem:Soolocal} (iii)}
 The claim of Theorem~\ref{theorem:Soolocal}(iii) would follow from computing $\bE[E_t|\tau\leq t]$ using \eqref{eq:Edist}, since $A_t = \sum_{s \leq t-1} E_s$; however, we were not able to evaluate this sum directly. Thus we will instead use a direct probabilistic argument to obtain the desired expectation. For this, we once again work with the modified Brauer diagram $\Br'_n$.
 
 Recall that the vertices $x_{t,j}$ in level $t$ are ordered by the index $j$ from top to bottom.
 This extends to slings in $\Slr_n(t)$ by ordering them according to the position of their upper endpoint in level $t$ (recall that, in $\Br'_n(t)$, there are $m$ slings at each level $t\geq 0$). We denote these slings by $S_1,\dots,S_m$ according to this ordering, and we denote the upper endpoint of $S_k$ by $y_k$ and the lower endpoint by $z_k$, for $k=1,\dots,m$. 
 
%\paul{Since $v$ and $w$ were already used just before for something different, I suggest to use $y$ and $z$ instead.} 
%\fabian{Go ahead}
 
 For brevity, we say a sling $S_k$ in $\Slr_n(t)$ \emph{propagates} if there is a sling $S$ in $\Slr_n(t+1)$ such that $S_k$ is the topmost sling in $\Slr_n(t)$ that is part of $S$. The probability that $S_k$ propagates equals the probability that an exploration starting from $z_k$ and going to the right hits $\mathbf{X}_{t+1}$ before it hits $y_1,\dots,y_k,z_1,\dots,z_{k-1}$ or $v_\infty(t)$ (we stop the exploration when it hits $\mathbf{X}_{t+1}$), and that a second exploration starting afterwards from $y_k$ to the right hits $\mathbf{X}_{t+1}$ before it hits $y_1,\dots,y_{k-1},z_1,\dots,z_{k-1}$ or $v_\infty(t)$. Using \eqref{eq:23} for the resulting sling after propagation, we obtain
 \begin{multline*}
  \bE\left[\mathds{1}\left\{S_k\text{ propagates}, S_k\ssq \Soo'(\infty)\right\}\right]\\ =\frac{2}{3}\cdot\frac{n}{n+2k}\cdot \frac{n-1}{n+2k-2}
  = \frac{2}{3}\left(\frac{m(2m+1)}{2m+2k-1} - \frac{m(2m+1)}{2m+2k+1}\right).
 \end{multline*}
 Denoting by $X_t$ the total number of slings in $\Slr_n(t)$ that propagate and eventually become part of the transverse string, we get from the above that 
 \[
  \bE[X_t] = \sum_{k=1}^m \bE\left[\mathds{1}\left\{S_k\text{ propagates}, S_k\ssq \Soo'(\infty)\right\}\right] = \frac{4m^2}{12m+3},
 \]
 by telescoping. Observe next that the equality $E_t=2X_t+1$ holds deterministically, and that therefore, since $A_t = \sum_{s=0}^{t-1} E_s$:
 \[
  \frac{\bE[A_t]}{t} = \frac{1}{t} \sum_{s=0}^{t-1} (2\bE[X_s]+1) \to 1+\frac{8m^2}{12m+3}.
 \]
 Since $\left\{A_t\right\}_{t\geq 0}$ is a process with regenerative increments, we can argue as in the proof of the strong law of large numbers for $|\Soo(t)|$ to obtain the almost sure convergence for $A_t$ in \eqref{eq:ABLLN}. This, together with \eqref{eq:SooLLN}, yields in turn the convergence for $B_t$ in \eqref{eq:ABLLN} and hence joint convergence, since $B_t=|\Soo(t)|-A_t-1$. 
\end{proof}

 %%%%%%%%%%%%%%%%%%%%%%%%%%%%%%%%%%%%%%%
\section{Counting components -- proofs}
\label{sec:proofsI}

The goal of this section is to prove Theorem~\ref{theorem:CC}, concerning the number $C_n(t)$ of closed loops in $\Br'_n(t)$ as $t \rightarrow \infty$ . The central argument for its proof is that $C_n(t)$ is close to a sum $\sum_{s=0}^t Y_n(s)$ of i.i.d. random variables. All claims of the theorem then follow from the standard theory about such sums -- see e.g. \cite[Theorems $6.6.1$, $6.10.2$, $7.1.1$ and $7.5.1$]{Gut13}. 

We begin by switching from $\Br_n(t)$ to the modified Brauer diagram $\Br'_n(t)$, see Remark~\ref{rem:modified} and Section~\ref{sec:RT}. Define $Y_n(t)$ for $t\geq 0$ to be the difference between the number of closed loops in the modified diagrams $\Br'_n(t+1)$ and in $\Br'_n(t)$. Observe that the loops counted by $Y_n(t)$ are precisely those that came from slings in $\Slr_n(t)$, with the help of layer $t+1$. 
%\paul{The previous sentence is very hard to understand.}
%\fabian{Would it be easier if the part before the comma wasn't there?}
%\paul{Changed}
Since the matchings in each layer are chosen uniformly and independently from one another, the random variables $(Y_n(t), t\geq 0)$ are i.i.d.. Their exact distribution is described by Lemma~\ref{lemma:CCYdistr} below.  

Finally, the difference between the true count $C_n(t)$ for the number of closed loops in $\Br_n(t)$, and $\sum_{s=0}^t Y_n(s)$ is bounded by the total number of slings and transverse strings in the unmodified $\Br_n(t)$. Hence, by Remark~\ref{rem:CC},
\[
 \left|C_n(t)-\sum_{s=0}^t Y_n(s)\right|\leq n,
\]
that is, this difference is small enough to not affect the limit laws in Theorem~\ref{theorem:CC}.

\begin{lemma}\label{lemma:CCYdistr}
Fix $t\geq 0$. Let $Y_n(t)$ denote the number of closed loops in the modified $\Br'_n(t+1)$ that contain slings in $\Slr_n(t)$. If $n$ is even, then $Y_n(t)$ has the probability generating function 
 \begin{equation}\label{eq:pgfYeven}
  G_{Y_n(t)}(x)=\frac{(n-1)!!}{(2n-1)!!}\prod_{k=1}^m (x+n+2(k-1)). 
 \end{equation}
In particular,
 \begin{equation}\label{eq:evenYmoments}
  \bE[Y_n(t)] = \sum_{k=1}^m \frac{1}{n+2k-1} \quad \text{ and } \quad 
  \bVar[Y_n(t)]= \sum_{k=1}^m \frac{n+2(k-1)}{(n+2k-1)^2}.
 \end{equation}
 If $n$ is odd, then $Y_n(t)$ has the probability generating function 
 \begin{equation}\label{eq:pgfYodd}
  G_{Y_n(t)}(x)=\frac{n!!}{(2n-1)!!}\prod_{k=1}^m (x+n+2k-1). 
 \end{equation}
In particular,
 \begin{equation}\label{eq:oddYmoments}
  \bE[Y_n(t)] = \sum_{k=1}^m \frac{1}{n+2k} \quad \text{ and } \quad 
  \bVar[Y_n(t)]= \sum_{k=1}^m \frac{n+2k-1}{(n+2k)^2}.
 \end{equation}
\end{lemma}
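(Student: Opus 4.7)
The plan is to reduce the computation of $G_{Y_n(t)}$ to counting alternating cycles in a union of two matchings. Contract each of the $m$ slings of $\Slr_n(t)$ to a single edge between its two endpoints in $\mathbf{X}_t$, yielding a fixed partial matching $N$ of $m$ edges on the $2n$ vertices of $\mathbf{X}_t \cup \mathbf{X}_{t+1}$; this contraction does not affect cycle counting since each sling is itself a path. Let $M$ denote the uniform random matching of layer $t+1$ on these $2n$ vertices. By the symmetry of $M$'s distribution, the law of $Y_n(t)$ depends only on the number $m$ of sling-pairs in $N$, so we may treat $N$ as fixed. A closed loop of $\Br'_n(t+1)$ that meets a sling of $\Slr_n(t)$ corresponds to exactly one alternating cycle of $M \cup N$, and every such cycle is confined to $\mathbf{X}_t$ since the $N$-edges lie there; hence $Y_n(t)$ equals the number of alternating cycles of $M \cup N$.

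To compute the PGF, I would introduce a more general quantity
\[
f(r, s; x) := \sum_M x^{Y(M)},
\]
where $M$ ranges over perfect matchings of a vertex set consisting of $r$ distinguished sling-pairs (covering $2r$ vertices) together with $s$ singletons, and $Y(M)$ counts alternating cycles of $M \cup N$. The original PGF is recovered as $G_{Y_n(t)}(x) = f(m, 2m; x)/(2n-1)!!$ for even $n = 2m$, and as $G_{Y_n(t)}(x) = f(m, 2m+2; x)/(2n-1)!!$ for odd $n = 2m+1$ (in the odd case, $v_\infty(t)$ contributes one singleton in addition to the $2m+1$ vertices of $\mathbf{X}_{t+1}$). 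The key claim, to be proved by induction on $r$, is
\[
f(r, s; x) = (s-1)!! \prod_{k=1}^r \bigl( x + s + 2(k-1) \bigr),
\]
with the convention $(-1)!! := 1$; equivalently, $f(r, s; x) = \bigl(x + s + 2(r-1)\bigr) f(r-1, s; x)$ with base case $f(0, s; x) = (s-1)!!$.

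The inductive step proceeds by conditioning on the $M$-match of one endpoint $b_r$ of the last sling-pair $U_r = \{a_r, b_r\}$, giving three cases. (A) With one option, $b_r$ matches $a_r$, closing $U_r$ into a $2$-cycle and contributing $x \cdot f(r-1, s; x)$. (B) With $s$ options, $b_r$ matches a singleton $v$; the resulting path ends at the dead-end $v$ and $a_r$ becomes a free vertex without an $N$-partner, so no cycle can contain $a_r$ and it plays the role of an additional singleton in the residual problem of $r-1$ sling-pairs and $s$ singletons, contributing $s \cdot f(r-1, s; x)$. (C) With $2(r-1)$ options, $b_r$ matches some vertex in another sling-pair $U_k$ ($k < r$); the resulting three-edge path has $a_r$ and the remaining vertex of $U_k$ as its free endpoints and, for cycle-counting purposes, may be contracted to a single effective sling-pair, yielding a residual problem with $r-1$ sling-pairs and $s$ singletons and contributing $2(r-1) \cdot f(r-1, s; x)$. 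Summing gives the recurrence. Specializing $r = m$ and $s \in \{2m, 2m+2\}$ and dividing by $(2n-1)!!$ yields \eqref{eq:pgfYeven} and \eqref{eq:pgfYodd}; the moment formulas \eqref{eq:evenYmoments} and \eqref{eq:oddYmoments} follow immediately by observing that the factored PGF identifies $Y_n(t)$ in distribution with $\sum_{k=1}^m \xi_k$ for independent Bernoulli variables $\xi_k$ with success probabilities $1/(n+2k-1)$ (even case) or $1/(n+2k)$ (odd case).

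The main obstacle is locating the right generalization for the induction: a direct induction on $n$ or $m$ that keeps $s$ tied to $r$ (as in the original problem, where $s \in \{2r, 2r+2\}$) does not close up, since the residual problems in cases (B) and (C) decrease $r$ by one while leaving $s$ unchanged. Decoupling $r$ and $s$, and recognizing in cases (B) and (C) that freed sling-endpoints and contracted three-edge paths behave for cycle counting exactly like singletons and effective sling-pairs, are the combinatorial insights that make the induction go through.
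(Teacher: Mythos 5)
Your argument is correct, and it takes a genuinely different route from the paper. The paper proves the lemma probabilistically: it orders the slings $S_1,\dots,S_m$ by their upper endpoints, defines $Y_{n,k}(t)$ as the indicator that $S_k$ is the topmost sling of a loop closed in layer $t+1$, and uses sequential explorations (from $y_m$ down to $y_1$, stopping at level $t+1$, at previously listed sling endpoints, and at $v_\infty(t)$ in the odd case) to show directly that the $Y_{n,k}(t)$ are independent Bernoulli variables with success probabilities $\frac{1}{n+2k-1}$, resp.\ $\frac{1}{n+2k}$; the product form of the generating function is then immediate. You instead contract each sling to an edge, identify $Y_n(t)$ with the number of alternating cycles in the union of a fixed partial matching $N$ ($m$ edges) with the uniform layer matching $M$ (this identification is sound: level-$(t+1)$ vertices and, in the odd case, $v_\infty(t)$ have degree one in $M\cup N$, so no cycle meets them, exactly mirroring the fact that closed loops in $\Br'_n(t+1)$ avoid level $t+1$ and the transverse string), and you compute the weighted count $f(r,s;x)$ exactly via the peeling recurrence $f(r,s;x)=(x+s+2(r-1))f(r-1,s;x)$, whose three cases (close a $2$-cycle, absorb a singleton, contract a three-edge path into an effective pair) are all justified and whose choices $1+s+2(r-1)$ exhaust the possible partners of $b_r$. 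Decoupling $r$ from $s$ is indeed what makes the induction close, and specializing $s=n$ (even) and $s=n+1$ (odd) reproduces \eqref{eq:pgfYeven} and \eqref{eq:pgfYodd}, with the moment formulas following from the factorization. The trade-off: the paper's exploration argument produces an explicit coupling realizing $Y_n(t)$ as a sum of independent indicators attached to identifiable slings (which meshes with the exploration machinery used elsewhere, e.g.\ in the proof of Theorem~\ref{theorem:Soolocal}(iii)), whereas your enumeration yields the Bernoulli decomposition only in distribution but gives the generating function by a self-contained combinatorial identity about unions of matchings that is of independent interest and generalizes painlessly to an arbitrary number of singletons.
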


\begin{proof}
%\paul{Same, change to $y$ and $z$?}
%\fabian{Go ahead}
 As in the proof of Theorem~\ref{theorem:Soolocal}(iii) above, we order the $m$ slings in $\Slr_n(t)$ according to the position of their upper endpoint in level $t$. We denote these slings by $S_1,\dots,S_m$ according to this ordering, and we denote the upper endpoint of $S_k$ by $y_k$ and the lower endpoint by $z_k$, for $k=1,\dots,m$. The endpoint of $\Soo(t)$ in level $t$ will be denoted by $v_\infty(t)$, if a transverse string exists. 
 Finally, we can order all loops which close in layer $t+1$ according to the topmost sling in $\Slr_n(t)$ that is contained in that loop. Notice that this provides an injection from the set of loops that closed in layer $t+1$ to $\Slr_n(t)$. 
 
 For brevity, we will say that a sling $S_k$ in $\Slr_n(t)$ \emph{forms a loop} if it is the topmost sling in a closed loop in $\Br'_n(t+1)$ (among the slings of $\Slr_n(t)$). 
 
 In case $n$ is even, each sling either forms a loop, or is part of a closed loop in $\Br'_n(t)$ but not the topmost sling in that loop, or is part of a larger sling in $\Slr_n(t+1)$. 
 We fix $k$ and explore the continuation of $S_k$ in $\Br'_n(t+1)$ by starting at $y_k$ and going to the right, continuing along the connected component. Consider now the set consisting of all vertices in level $t+1$ together with $y_1,\dots,y_{k-1}$ and $z_1,\dots,z_k$. Stop the exploration once we reach one of these vertices. Since the matching in layer $t+1$ is uniform, each of these $n+2k-1$ vertices is equally likely to be the endpoint of the exploration. For $S_k$ to form a loop, we must have reached $z_k$ first, which happens with probability $\frac{1}{n+2k-1}$. In all other cases, $S_k$ is either not the topmost sling of its component, or part of a larger sling in $\Slr_n(t+1)$. 
 
 Let  $Y_{n,k}(t)$ be the indicator random variable for the event that $S_k$ forms a loop. By the preceeding discussion, $Y_{n,k}(t)\sim \Ber\big(\frac{1}{n+2k-1}\big)$, and we claim that the $Y_{n,k}$ are independent from one another, for $k=1,\dots,m$, which implies \eqref{eq:pgfYeven}. 
 To see this, start the exploration described above from $y_m$. Afterwards start a new exploration from $y_{m-1}$, and afterwards from $y_{m-2}$, and so on, with the final exploration starting from $y_1$.
Independently of whether or not $S_k$ formed a loop, the exploration started at $y_{k'}$ with $k'<k$ is clearly stopped at one of the vertices $y_1,\dots,y_{k'-1}, z_1,\dots, z_{k'}$ or in one of the vertices in level $t+1$ that have not been reached by a previous exploration, whose distribution is uniform on this set.
%Now, if $S_k$ forms a loop in $\Br'_n(t+1)$, then this loop only contains slings $S_\ell$ for $\ell \geq k$, but for the exploration started at $y_{k'}$ with $k'<k$, these endpoints are not relevant anymore. Similarly, if $S_k$ did not form a loop, then it might still become a part of the exploration started at some $y_{k'}$ for $k'<k$, but since the matching in layer $t+1$ is uniform, this has no effect on the probabilities of the later exploration.  
%  \paul{These last sentences are also weirdly written.}
%  \fabian{Maybe. What bothers you specifically?}
%  \paul{"These endpoints are not relevant anymore" and "this has no effect on the probabilities of the later exploration". What is this refering to?}
%  \fabian{Suggestion: Replace everything from ``Now, if $S_k$ forms a loop'' with ``Independent of whether or not $S_k$ formed a loop, the exploration started at $y_{k'}$ with $k'<k$ is stopped uniformly at random in one of the vertices $y_1,\dots,y_{k'-1}, z_1,\dots, z_k$ or in one of the vertices in level $t+1$ that have not been reached by a previous exploration.''}
%  \paul{Ok I replaced it. But it is a $z_{k'}$, right?}
 
 The claims about expectation and variance in \eqref{eq:evenYmoments} follow immediately from this. 
 
 The argument for the case where $n$ is odd uses the same exploration technique, but we additionally stop an  exploration starting from $y_k$ if we reach $v_\infty(t)$ (in which case $S_k$ becomes a part of $\Soo(t+1)$). Thus, with $Y_{n,k}(t)$ being defined verbatim as in the even case, we have $Y_{n,k}\sim \Ber\big(\frac{1}{n+2k}\big)$, and independence can be shown in the same way as above, so we obtain \eqref{eq:pgfYodd} and \eqref{eq:oddYmoments}. 
\end{proof}
 
%%%%%%%%%%%%%%%%%%%%%%%%%%%%%%%%%%%%%%%
\section{Components by shape -- Proofs}
\label{sec:proofsIII}

Our goal here is to prove Theorem \ref{theorem:shape}.
We begin this section by first ensuring that the map associating a weak shape $\mb a$ to a strong shape $\shape$ is indeed well-defined, as claimed in Definition~\ref{def:shape}. In doing so, we will also obtain a precise characterisation of all words over $\{A,B\}$ that can occur as strong shapes. To this end, write 
\[
 \{A,B\}^* = \{\varepsilon\} \cup \bigcup_{\ell=1}^\infty \{A,B\}^{\ell}
\]
for the set of all finite words over the alphabet $\{A,B\}$ (where $\varepsilon$ stands for the empty word). Consider the automaton in Figure~\ref{fig:automaton}.
\begin{figure}[!ht]
 \centering
 \includegraphics[width=0.9\textwidth]{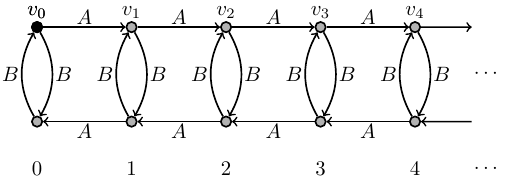}
 \caption{An infinite deterministic automaton, where the unique starting and accepting state is the black vertex $v_0$. Just as for the vertices in $\Br_n(t)$, we think of the states as being arranged in levels, with levels being enumerated as in the bottom of the figure.}\label{fig:automaton}
\end{figure}

For any word $w \in \{ A, B \}^*$, read $w$ and follow the corresponding edges of the automaton starting from $v_0$. We say that a word $w \in \{ A,B\}^*$ is accepted by the automaton if after reading $w$ we end up in the accepting state, $v_0$.

\begin{lemma}\label{lemma:automaton}
 A non-empty word $\shape\in\{A,B\}^*$ describes a strong shape for $\Br_n$ if and only if it can be read off from the automaton in Figure~\ref{fig:automaton} by starting and ending at $v_0$, such that no state in the automaton is visited more than $m$ times. Moreover, if $\shape$ is a strong shape, then any closed loop with strong shape $\shape$ has weak shape $(a_k)_{k\geq0}$ where $a_k$ (for any $k$) is the number of times the automaton visits the vertex $v_k$ when reading $\shape$. 
\end{lemma}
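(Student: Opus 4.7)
The plan is to identify the states of the automaton with pairs $(k,\varepsilon)$, where $k\in\IN_0$ is the relative level (the level of a vertex of $K$ minus $s$, the leftmost level of $K$) and $\varepsilon\in\{R,L\}$ tracks the direction in which the exploration will next exit the current vertex. The starting state $v_0$ corresponds to being at the topmost vertex of $K$ at level $s$ with intended rightward exit. Under this identification, the transitions of the automaton in Figure~\ref{fig:automaton} should mirror the exploration: reading an $A$ (an across edge) preserves $\varepsilon$ and shifts $k$ by $+1$ if $\varepsilon=R$ or $-1$ if $\varepsilon=L$, while reading a $B$ (a bend edge) preserves $k$ and flips $\varepsilon$. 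This is purely a case analysis on which edge of the current vertex is the entry and which is the exit.

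With this correspondence, the forward direction is immediate by induction on the letters of $\shape$: the walk induced in the automaton tracks the sequence of (relative level, next-exit direction) pairs along the exploration of $K$. Since the exploration closes up at the starting vertex and the next exit there is again rightward, the walk starts and ends at $v_0$. For the visit count, a direct classification of the $K$-vertices at level $s+k$ according to which of their two edges is the entry shows that the $(k,R)$- and $(k,L)$-visits each equal $a_k:=\tfrac12\lvert K\cap\mb X_{s+k}\rvert$, so no state is visited more than $m=\lfloor n/2\rfloor$ times because $2a_k\leq n$. Well-definedness of $\mb a(\shape)$ then follows because the automaton is deterministic, so the visit counts depend only on $\shape$. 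Conversely, given any walk from $v_0$ to $v_0$ in the automaton with no state visited more than $m$ times, let $a_k$ be the number of visits to the state at relative level $k$. One constructs a closed loop $K$ realizing the walk inside some $\Br_n(t)$ by placing $2a_k$ vertices of $K$ at level $s+k$ (choosing $s\geq 1$ and $t$ large enough) and adding the edges of $K$ prescribed step by step by the transitions of the walk; the condition $a_k\leq m$ ensures $2a_k\leq n$, so the vertices fit into each level of $\Br_n$, and the remaining vertices of each layer can be matched arbitrarily to complete the Brauer diagram.

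The main obstacle is the visit-counting step, where one must verify that the $(k,R)$- and $(k,L)$-visits coincide at every relative level. This ultimately reflects the cycle structure of $K$: at each level, the exploration must enter and leave equally often, and these entries and exits split evenly between the two directions precisely because $K$ is a single closed cycle traversed once, so the quantity "leftward crossings minus rightward crossings" of the vertical line separating levels $s+k-1$ and $s+k$ is forced to vanish. Once this balance is established, the rest of the argument is a direct translation of the exploration procedure into the language of the automaton.
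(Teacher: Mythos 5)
Your proposal is correct and takes essentially the same route as the paper: you identify the automaton's states with pairs (relative level, traversal direction), verify the $A$/$B$ transition rules by case analysis on the entry/exit edges, read off the weak shape and the bound $m$ from the visit counts, and prove the converse by explicitly constructing a realizing loop level by level, completing each layer's matching arbitrarily. The only difference is that you make explicit (via the crossing-parity argument) that the two states at each level are visited equally often, a point the paper treats as immediate.
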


\begin{proof}
 Let $K$ be a closed loop in $\Br_n(t)$ with strong shape $\shape\in\{A,B\}^*$, and leftmost level $s$. Recall that this means that the exploration described in Definition~\ref{def:shape} produces $\shape$ when writing down at every step whether the upcoming edge goes across levels or bends around. Comparing the exploration process with the tracking of $\shape$ in the automaton, it is easily verified that the automaton is at a state in level $k\geq0$ if and only if the exploration is in a vertex of level $s+k$, and that the automaton is at a state in the top row of Figure~\ref{fig:automaton} if and only if the exploration process passes through the current vertex coming from the left. 
%\paul{I feel like you use automaton for the graph and also for the exploration.}
%\fabian{I don't understand this comment}
 It thus follows that $\shape$ is accepted by the automaton, that no state is visited more than $m$ times, and that $a_k$ is given by the number of visits to the top (or equivalently, bottom) state in level $k$.  
 
 If conversely $\shape\in\{A,B\}^*$ is a finite word accepted by the automaton without visiting any state more than $m$ times then we can explicitly construct a component whose exploration gives $\shape$: Start in the top vertex in some level $s$, and (except for the final step) draw an edge to the topmost available vertex at every step, in such a way that the sequence of $A$'s and $B$'s coincides with $\shape$. In the last step, connect the current vertex to the starting vertex in layer $s$. 
\end{proof} 

Observe that we can count with the help of this automaton the number of strong shapes with a given weak shape.

\begin{lemma}\label{lemma:gammana}
 Given a weak shape $\mb{a}=(a_k)_{k\geq0}$ for $\Br_n$, there are 
 \[
  \gamma_n^{\mb a} = \prod_{i\geq1} \binom{a_{i-1}+a_i-1}{a_i}
 \]
 strong shapes with weak shape $\mb a$.
\end{lemma}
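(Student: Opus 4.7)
My plan is to use Lemma~\ref{lemma:automaton}, which reduces the claim to counting walks on the automaton of Figure~\ref{fig:automaton} that realize the weak shape $\mb a$, and then to encode each such walk by a family of integer compositions, one at each level boundary.

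The first observation is that, in any such walk, the visits to level $k$ alternate strictly between the top state $v_k$ and the bottom state $v_k'$, so that the $2a_k$ level-$k$ visits occur in the pattern $v_k, v_k', v_k, v_k', \ldots, v_k, v_k'$. This follows from the structure of the automaton: from $v_k$ the walk either passes directly to $v_k'$ via $B$, or launches an excursion through levels $\geq k+1$ that must return to level $k$ by passing from $v_{k+1}'$ down to $v_k'$; a symmetric statement applies to $v_k'$. The base case $k=0$ uses that the walk begins and ends at $v_0$, while for $k \geq 1$ the first level-$k$ visit is at $v_k$ (approached from below) and the last is at $v_k'$ (since the walk must eventually descend back to level $0$).

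Next I associate to each walk a composition at each level boundary. Enumerate the top-$k$ visits in their order of occurrence as $r = 1, \ldots, a_k$, and let $w_r^{(k)} \in \IN_0$ be the number of visits to $v_{k+1}$ that lie in the above-$k$ excursion launched from the $r$-th top-$k$ visit (setting $w_r^{(k)} = 0$ when the edge following this visit is $B$). The alternation established above implies that every visit to $v_{k+1}$ lies in exactly one such excursion, so $\sum_{r=1}^{a_k} w_r^{(k)} = a_{k+1}$. Hence $(w_r^{(k)})_{r=1}^{a_k}$ is a weak composition of $a_{k+1}$ into $a_k$ non-negative parts, and the number of such compositions is $\binom{a_k + a_{k+1} - 1}{a_{k+1}}$.

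It remains to show that walk $\mapsto$ compositions is a bijection onto all such tuples. The forward map is as described; for the inverse, given arbitrary compositions $(w_r^{(k)})_{k,r}$ one builds the walk step by step, processing the top-$k$ visits in order: each $w_r^{(k)}$ determines whether the $r$-th top-$k$ visit is followed by $A$ (starting an excursion that will contain the next $w_r^{(k)}$ level-$(k+1)$ top visits) or by $B$. The bot-$k$ choices are then forced by the same data, since a non-final bot-$k$ visit leads downward precisely when the adjacent top-$k$ visits belong to different above-$(k-1)$ excursions (information already encoded in the composition $w^{(k-1)}$), while the final bot-$k$ visit is always followed by $A$. Since the compositions at distinct boundaries are chosen independently, the total count is
\[
\prod_{k \geq 0} \binom{a_k + a_{k+1} - 1}{a_{k+1}} = \prod_{i \geq 1} \binom{a_{i-1} + a_i - 1}{a_i},
\]
as claimed. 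The main technical point, which I expect to be the primary obstacle, is verifying in the reverse direction that the recursively defined walk is indeed well-defined, achieves the prescribed visit counts, and closes properly at $v_0$ after $2\sum_k a_k$ steps; a short induction on the stretch of $\mb a$ should suffice.
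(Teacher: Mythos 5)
Your argument is correct and matches the paper's proof in its essential content: both reduce to counting closed walks in the automaton via Lemma~\ref{lemma:automaton} and encode, for each level boundary, how the $a_{k+1}$ visits to level $k+1$ distribute over the excursions launched from the $a_k$ top-level-$k$ visits, i.e.\ a weak composition counted by $\binom{a_k+a_{k+1}-1}{a_{k+1}}$. The paper packages this as an induction on $\str(\mb a)$, peeling off the top level, while you state it as one global bijection onto tuples of compositions and then fall back on the same induction to verify the inverse, so the two write-ups differ only in presentation.
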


% \paul{Put this lemma before?}
% \fabian{If anything, I could see it right after lemma 19 (still together with Remark 23). But I think leaving it here is also fine}
% \paul{I moved it}

\begin{proof}
 We will proceed via induction over $\str(\mb a)$, and show that $\gamma_n^{\mb a}$ counts the number of closed walks starting (and ending) in $v_0$ in the automaton of Figure~\ref{fig:automaton} with $a_k$ visits to $v_k$ for all $k \geq 0$. The claim then follows from Lemma~\ref{lemma:automaton}. 
 
 For $\str(\mb a)=0$, we evaluate $\gamma_n^{\mb a} = 1$.
 In the automaton, the only walk fulfilling all corresponding requirements is the unique walk following the arrows between the two states in level 0. 
 
 Suppose now that the statement holds for all weak shapes of stretch $r\geq0$, and let $\mb a$ be a weak shape having stretch $r+1$. Any walk realizing $\mb a$ in the automaton decomposes as follows: whenever the walk reaches the top state in level $r$, it can either move on to the top state in level $r+1$ and then move between the two states in level $r+1$ before continuing to the bottom state in level $r$, or it can skip the excursion to level $r+1$ and move directly from top to bottom in level $r$. Thus, any walk with stretch $r+1$ for $\mb a=(a_0,a_1,\dots,a_r,a_{r+1},0,\dots)$ is uniquely determined by a walk of stretch $r$ for $\mb{a'}=(a_0,a_1,\dots,a_r,0,0,\dots)$ and a sequence of non-negative integers $j_1,\dots,j_{a_r}$ such that $j_1+\dots+j_{a_r}=a_{r+1}$, where $j_k$ is the number of visits to the top state in level $r+1$ between the $k$-th visit to the top state in level $r$ and the $k$-th visit to the bottom state in level $r$. By the induction hypothesis, we thus obtain
 \[
  \#(\text{walks for }\mb a) 
  = \sum_{j_1+\dots+j_{a_r}=a_{r+1}} \gamma_n^{\mb{a'}}
  = \sum_{j_1+\dots+j_{a_r}=a_{r+1}} \prod_{i=1}^r \binom{a_{i-1}+a_i-1}{a_i}.
 \]
 The sum runs over all compositions of $a_{r+1}$ into $a_r$ non-negative summands, and these compositions are counted by $\binom{a_r+a_{r+1}-1}{a_{r+1}}$. Hence, the above simplifies to
 \[
  \#(\text{walks for }\mb a) = \prod_{i=1}^{r+1} \binom{a_{i-1}+a_i-1}{a_i} = \gamma_n^{\mb a},
 \]
 thereby concluding the proof.
\end{proof}

\begin{remark}
 As S. Wagner pointed out to us, the expression for $\gamma_n^{\mb a}$ in Lemma~\ref{lemma:gammana} also counts the number of Dyck paths with $a_k$ up-steps from height $k$ to $k+1$, for $k\geq0$, see also \cite[Proposition 10]{Flajolet80}. The bijection works by encoding Dyck paths in a relabelled version of the automaton in Figure~\ref{fig:automaton}, where transitions ending in an upper state are labelled $U$, and transitions ending in a lower state are labelled $D$. Now start by appending an extra up-step to the Dyck path (at the end of the path), and ignore its first step. Then, convert the steps of the Dyck path to transitions in the automaton beginning in the starting state according to the labelling; so up-steps correspond to $U$-transitions, and down-steps correspond to $D$-transitions. 
\end{remark}

In what follows for the proof of Theorem~\ref{theorem:shape}, our computations will frequently invoke the known identity
\begin{equation}\label{eq:disccalc}
 \sum_{k=1}^n (k-1)_a = \frac{1}{a+1}(n)_{a+1},
\end{equation}
which is obtained from a telescoping sum after observing that 
\[
 (k)_{a+1}-(k-1)_{a+1} = (k-(k-a-1))(k-1)_a = (a+1)(k-1)_a.
\]

We now compute expecations, variances and covariances of the indicators of presence of components of given shape. 

\begin{lemma}\label{lemma:EYn}
 Let $\shape$ and $\shape'$ be strong shapes for $\Br_n(\cdot)$ with weak shapes $(a_k)_{k\geq0}$ and $(a'_k)_{k\geq0}$, respectively. Set $b_0=a_0$ and $b_i=a_i+a_{i-1}$ for $i\geq1$, as in Theorem~\ref{theorem:shape}. Let $Y_{n,k}^\shape(s)$ be the indicator random variable for the event that the exploration process described in Definition~\ref{def:shape} started at the $(n-k)$-th vertex of a fixed level $s$ yields $\shape$. Define $b'_i$ and $Y_{n,k}^{\shape'}(s)$ analogously. Then 
 \begin{equation}\label{eq:Ynkshape}
  \bE\left[Y_{n,k}^\shape(s)\right]=(k-1)_{2a_0-1} \frac{\prod_{i\geq 1} (n)_{2a_i}}{\prod_{j\geq 0} \fdf{2n-1}_{b_j}}
 \end{equation}
 and, for $k'<k$,
 \begin{equation}\label{eq:Ynkshapes1}
  \bE\left[Y_{n,k}^\shape(s)Y_{n,k'}^{\shape'}(s)\right] = (k-2a'_0-1)_{2a_0-1}(k'-1)_{2a'_0-1} \frac{\prod_{i\geq 1} (n)_{2a_i+2a'_i}}{\prod_{j\geq 0} \fdf{2n-1}_{b_j+b'_j} }.
 \end{equation}
 Moreover, for $h>0$ we have (for any values of $k,k'$)
 \begin{multline}\label{eq:Ynkshapes2}
  \bE\left[Y_{n,k}^\shape(s)Y_{n,k'}^{\shape'}(s+h)\right]\\
  = (k-1)_{2a_0-1}(k'-1)_{2a'_0-1} \frac{(n-2a_0')_{2a_h}\prod_{h\neq i\geq 1} (n)_{2a_i+2a'_{i-h}}}{\prod_{j\geq 0} \fdf{2n-1}_{b_j+b'_{j-h}} }
 \end{multline}
 where we extend $a'_i=b'_i=0$ for negative indices $i$. 
\end{lemma}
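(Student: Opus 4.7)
The plan is to exploit that, once all matchings are fixed, the exploration from Definition~\ref{def:shape} is deterministic, so the event $\{Y_{n,k}^\shape(s)=1\}$ decomposes as a disjoint union over ``realizations'' indexed by the ordered tuples of loop vertices visited at each level (the order being prescribed by $\shape$). Each realization corresponds to a specific set of loop edges being present in the random matchings, and since different layers are sampled independently while a uniform matching on $2n$ points contains a prescribed set of $b$ disjoint edges with probability $\fdf{2n-1}_b^{-1}$, the probability of a given realization factorizes layer by layer.

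For \eqref{eq:Ynkshape}, the starting vertex is fixed to $x_{s,n-k}$, below which lie $k-1$ further positions at level $s$. A realization consists of an ordered choice of the $2a_0-1$ remaining loop vertices at level $s$, contributing $(k-1)_{2a_0-1}$, together with, for each $i\ge1$, an ordered choice of $2a_i$ loop vertices at level $s+i$ out of $n$, contributing $(n)_{2a_i}$. The number of loop edges in layer $s+j$ is $b_j=a_{j-1}+a_j$ (with the convention $a_{-1}=0$, so $b_0=a_0$): indeed, each of the $2a_{j-1}$ loop vertices at level $s+j-1$ contributes exactly one endpoint to layer $s+j$, and similarly each of the $2a_j$ loop vertices at level $s+j$ contributes one, giving $(2a_{j-1}+2a_j)/2$ edges. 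Multiplying the vertex counts by $\prod_{j\ge0}\fdf{2n-1}_{b_j}^{-1}$ yields \eqref{eq:Ynkshape}.

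The product identities \eqref{eq:Ynkshapes1} and \eqref{eq:Ynkshapes2} follow by the same procedure, subject to the two loops being vertex-disjoint. In \eqref{eq:Ynkshapes1}, with $k'<k$, all $2a'_0$ level-$s$ vertices of the second loop lie among the $k-1$ positions below $x_{s,n-k}$, so the first loop picks its remaining $2a_0-1$ level-$s$ vertices from $k-2a'_0-1$ positions, giving the combined factor $(k-2a'_0-1)_{2a_0-1}(k'-1)_{2a'_0-1}$; at each level $s+i$ with $i\ge1$ the disjoint ordered choices combine to $(n)_{2a_i}(n-2a_i)_{2a'_i}=(n)_{2a_i+2a'_i}$, and each layer $s+j$ now imposes $b_j+b'_j$ prescribed disjoint edges. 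In \eqref{eq:Ynkshapes2}, with the second loop shifted by $h>0$, it contributes no vertices at level $s$ so the factor $(k-1)_{2a_0-1}$ is preserved; at level $s+h$ the first loop picks its $2a_h$ vertices from the $n-2a'_0$ positions not used by the second loop's starting-level vertices, giving $(n-2a'_0)_{2a_h}$; at other levels $s+i$ with $i\ne h$ the disjoint combined count is $(n)_{2a_i+2a'_{i-h}}$, and in layer $s+j$ the total number of prescribed disjoint edges is $b_j+b'_{j-h}$, with the zero convention for negative indices.

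The main obstacle lies in the combinatorial bookkeeping: correctly accounting for the positions at level $s$ simultaneously used by both loops in \eqref{eq:Ynkshapes1}, properly incorporating the shift $h$ in \eqref{eq:Ynkshapes2}, and confirming that the loop edges in each layer depend, as a set, only on the weak shape through $b_j=a_{j-1}+a_j$, so that the denominator factorizes as a clean product of single-layer contributions.
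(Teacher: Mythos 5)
Your proposal is correct and follows essentially the same route as the paper: count the ordered realizations of each shape level by level (yielding $(k-1)_{2a_0-1}\prod_{i\geq1}(n)_{2a_i}$, with the vertex-disjointness adjustments $(k-2a'_0-1)_{2a_0-1}$ and $(n-2a'_0)_{2a_h}$ in the joint cases), and multiply by the per-layer probability $\fdf{2n-1}^{-1}_{b}$ of a prescribed set of $b$ disjoint edges lying in a uniform matching, using $b_j=a_{j-1}+a_j$ edges per layer. The only difference is cosmetic: you simplify products like $(n)_{2a_i}(n-2a_i)_{2a'_i}=(n)_{2a_i+2a'_i}$ directly, where the paper first writes the unsimplified two-exploration form.
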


Recall that $(n)_k=0$ for $k>n$, thus the expectations in Lemma~\ref{lemma:EYn} vanish whenever we encounter a configuration of shapes that requires more than $n$ vertices in a single column. 

\begin{proof}
 Let $K$ be a closed loop whose exploration starts at the $(n-k)$-th vertex of a fixed level $s$ (assuming such a component exists), that is to say, $x_{s,n-k}$ is the topmost vertex in the leftmost level intersecting $K$. 
 Now, for all levels $s+i$ with $i\geq 1$ there are $(n)_{2a_i}$ ways to select the $2a_i$ vertices in the order in which the exploration process visits them, starting from $x_{s,n-k}$ and going to the right. Given that $x_{s,n-k}$ is the topmost vertex of $K$ at level $s$, it remains to select the order for the remaining $2a_0-1$ vertices among the $k-1$ vertices below $v_0$. Thus $(k-1)_{2a_0-1}\prod_{i\geq 1} (n)_{2a_i}$ is the number of possible realisations of the strong shape $\shape$ when starting in $v_0$. 
 Since $b_j$ for $j\geq0$ counts the number of edges belonging to $K$ in layer $s+j$, we obtain that $\left(\prod_{j\geq0} \fdf{2n-1}_{b_j}\right)^{-1}$ is the probability for any of the possible realisations of $\shape$ to occur, and conclude \eqref{eq:Ynkshape}.
 
%\paul{maybe define an even stronger notion of shape, which is just the translation of a component started at level $0$?} 
 
 To obtain \eqref{eq:Ynkshapes1}, we first trace the exploration of a closed loop $K'$ for the strong shape $\shape'$, and afterwards run a second exploration for $K$ on the complement of $K'$. Note that since $k'<k$, all vertices of $K'$ in level $s$ are below the starting vertex for the second exploration. The same counting arguments as for \eqref{eq:Ynkshape} therefore yields
 \begin{multline*}
  \bE\left[Y_{n,k}^\shape(s)Y_{n,k'}^{\shape'}(s)\right] \\
  = (k-2a'_0-1)_{2a_0-1}(k'-1)_{2a'_0-1}\frac{\prod_{i\geq 1} (n-2a'_i)_{2a_i}(n)_{2a'_i}}{\prod_{j\geq 0} \fdf{2n-1-2b'_j}_{b_j}\fdf{2n-1}_{b'_j}},
 \end{multline*}
 where the terms in the products on the right-hand side simplify to the form given in \eqref{eq:Ynkshapes1}. 
 
 Finally, the proof of \eqref{eq:Ynkshapes2} mimics the one for \eqref{eq:Ynkshapes1}: first consider the exploration of $K'$ starting at level $s+h$, then the exploration of $K$ will run in the complement of $K'$. All computations are analogous up to the additional index shift between the two shapes. 
\end{proof}

For the proof of Theorem~\ref{theorem:shape}, we once again employ the modified random Brauer diagram $\Br'_n(t)$ as introduced in Remark~\ref{rem:modified}. This introduces at most $m$ additional components of a given strong or weak shape, and is therefore without effect on the results of Theorem~\ref{theorem:shape}.

Denote by $C_n^\shape(t)$ the number of components in the modified random Brauer diagram $\Br'_n(t)$ with a given strong shape $\shape$ and let $r$ be the stretch of $\shape$. Thus 
\[
 C_n^\shape(t) = \sum_{s=0}^{t-r-1} \sum_{k=1}^n Y_{n,k}^\shape(s)
\]
(note that for a component with stretch $r$ to be in $\Br'_n(t)$, it needs to start at the latest in level $t-r-1$),
and it will be convenient to write $Y_n^\shape(s) = \sum_{k=1}^n Y_{n,k}^\shape(s)$ for the number of components in $\Br'_n(t)$ that start in at level $s$ and have strong shape $\shape$. Using \eqref{eq:disccalc} and \eqref{eq:Ynkshape}, we obtain
\begin{equation}\label{eq:Ynshape}
 \bE\left[Y_n^\shape(s)\right] 
 = \sum_{k=1}^n (k-1)_{2a_0-1}\frac{\prod_{i\geq 1} (n)_{2a_i}}{\prod_{j\geq 0} \fdf{2n-1}_{b_j}} 
 = \frac{1}{2a_0}\prod_{i\geq 0} \frac{(n)_{2a_i}}{\fdf{2n-1}_{b_i}}
\end{equation}
and thus $\bE\left[Y_n^\shape(s)\right]=\mu_n^\shape$ by a comparison with \eqref{eq:sshapeLLN}.

\begin{lemma}\label{lemma:shapeVar}
 For a strong shape $\shape$ with stretch $r\geq 0$ in $\Br'_n(t)$, we have 
 \begin{equation}\label{eq:shapeVar}
  \bVar\left[C_n^\shape(t)\right] = (t-r)\mb K_\shape(0) + 2\sum_{h=1}^{t-r-1} (t-r-h) \mb K_\shape(h),
 \end{equation}
 where $\mb K_\shape(h)=\bCov\left[Y_n^\shape(s),Y_n^\shape(s+h)\right]$ for any $0\leq s\leq t-r-1$ is the auto-covariance function of $\shape$, given by 
 \begin{equation}\label{eq:autocov}
  \mb K_\shape(h) =   
     \begin{dcases}
      \frac{1}{4a_0^2}\prod_{i\geq0}\frac{(n)_{4a_i}}{\fdf{2n-1}_{2b_i}} +\mu_n^\shape - \big(\mu_n^\shape\big)^2 & \text{ if $h=0$}\\
      \frac{1}{4a_0^2} \prod_{i\geq0} \frac{(n)_{2a_i+2a_{i-h}}}{\fdf{2n-1}_{b_j+b_{j-h}}} - \big(\mu_n^\shape\big)^2 & \text{ if $1\leq h\leq r+1$}\\
      0 & \text{ if $h>r+1$}.
     \end{dcases}
 \end{equation}
 In particular, as $t\to\infty$,
 \begin{equation}\label{eq:shapelimVar}
  \bVar\left[\frac{C_n^\shape(t)-\mu_n^\shape t}{\sqrt{t}}\right] \to \mb K_\shape(0) + 2 \sum_{h\geq 1} \mb K_\shape(h).
 \end{equation}
\end{lemma}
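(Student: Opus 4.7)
The plan is to decompose $C_n^\shape(t)=\sum_{s=0}^{t-r-1}Y_n^\shape(s)$ as above, exploit stationarity to reduce the variance to an auto-covariance $\mb K_\shape(h)$, and then evaluate $\mb K_\shape(h)$ in each of the three regimes using Lemma~\ref{lemma:EYn}. The limit \eqref{eq:shapelimVar} will then be an immediate corollary of \eqref{eq:shapeVar}.

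First, since the matchings in $\Br'_n$ are i.i.d.\ across layers, the process $\{Y_n^\shape(s)\}_{s\geq0}$ is strictly stationary, so $\bCov[Y_n^\shape(s),Y_n^\shape(s+h)]$ depends only on $h$. Expanding $\bVar[C_n^\shape(t)]=\sum_{s,s'=0}^{t-r-1}\bCov[Y_n^\shape(s),Y_n^\shape(s')]$ and grouping pairs by $h=|s-s'|$ gives \eqref{eq:shapeVar} directly. For $h\geq r+2$, the event $\{Y_n^\shape(s)=1\}$ is measurable with respect to the matchings in layers $s,s+1,\dots,s+r+1$ only (a loop with leftmost level $s$ and stretch $r$ uses no other layer), so $Y_n^\shape(s)$ and $Y_n^\shape(s+h)$ are independent and $\mb K_\shape(h)=0$.

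For $h=0$, I would write $\bE[Y_n^\shape(s)^2]=\sum_k\bE[Y_{n,k}^\shape(s)]+2\sum_{k>k'}\bE[Y_{n,k}^\shape(s)Y_{n,k'}^\shape(s)]$, using that each $Y_{n,k}^\shape(s)$ is an indicator. The first sum equals $\mu_n^\shape$, and by \eqref{eq:Ynkshapes1} applied with $\shape'=\shape$ the second sum factors out the $(k,k')$-independent prefactor $\prod_{i\geq1}(n)_{4a_i}/\prod_{j\geq0}\fdf{2n-1}_{2b_j}$. What remains is $2\sum_{k>k'}(k-2a_0-1)_{2a_0-1}(k'-1)_{2a_0-1}$, which I would evaluate via the key algebraic identity
\[
 (k-1)_{2a_0}\,(k-2a_0-1)_{2a_0-1}=(k-1)_{4a_0-1}
\]
(both sides are the product of the $4a_0-1$ consecutive integers $k-1,k-2,\dots,k-4a_0+1$). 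Applying \eqref{eq:disccalc} first over $k'$ and then over $k$ yields $\frac{(n)_{4a_0}}{4a_0^2}$, and collecting all factors produces $\bE[Y_n^\shape(s)^2]=\mu_n^\shape+\frac{1}{4a_0^2}\prod_{i\geq0}\frac{(n)_{4a_i}}{\fdf{2n-1}_{2b_i}}$, from which $\mb K_\shape(0)$ follows by subtracting $(\mu_n^\shape)^2$.

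For $1\leq h\leq r+1$, I would apply \eqref{eq:Ynkshapes2}: the $(k,k')$-dependence is entirely through $(k-1)_{2a_0-1}(k'-1)_{2a_0-1}$, so the double sum factors as $\bigl[\sum_k(k-1)_{2a_0-1}\bigr]^2=\bigl[(n)_{2a_0}/(2a_0)\bigr]^2$ by \eqref{eq:disccalc}. Combining the resulting $[(n)_{2a_0}]^2$ with the factor $(n-2a_0)_{2a_h}$ through $(n)_{2a_0}(n-2a_0)_{2a_h}=(n)_{2a_0+2a_h}$ exactly supplies the $i=0$ and $i=h$ contributions of the product $\prod_{i\geq0}(n)_{2a_i+2a_{i-h}}$; the remaining factors $\prod_{i\geq1,i\neq h}(n)_{2a_i+2a_{i-h}}$ from \eqref{eq:Ynkshapes2} complete the product, and subtracting $(\mu_n^\shape)^2$ yields the claimed $\mb K_\shape(h)$. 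Finally, \eqref{eq:shapelimVar} is obtained from \eqref{eq:shapeVar} by dividing by $t$ and noting that only $\mb K_\shape(0),\dots,\mb K_\shape(r+1)$ are nonzero while $\frac{t-r}{t},\frac{t-r-h}{t}\to1$. The main obstacle is the careful bookkeeping of index shifts between the two equivalent forms of the infinite products over $i$; no new probabilistic input beyond Lemma~\ref{lemma:EYn} is needed.
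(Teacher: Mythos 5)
Your proposal is correct and follows essentially the same route as the paper's proof: expand the variance of $C_n^\shape(t)=\sum_{s}Y_n^\shape(s)$, group covariances by the lag $h$, and evaluate $\mb K_\shape(h)$ from Lemma~\ref{lemma:EYn} via \eqref{eq:disccalc}, the identity $(k-1)_{2a_0}(k-2a_0-1)_{2a_0-1}=(k-1)_{4a_0-1}$ for $h=0$, the factorisation $(n)_{2a_0}(n-2a_0)_{2a_h}=(n)_{2a_0+2a_h}$ for $1\leq h\leq r+1$, and disjointness of the relevant layers for $h>r+1$. Nothing essential differs from the paper's argument.
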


\begin{proof}
 We first note that \eqref{eq:shapeVar} is obtained by expanding the variance as 
 \begin{multline*}
  \bVar\left[C_n^\shape(t)\right]\\
  = \sum_{s=0}^{t-r-1} \bVar\left[Y_n^\shape(s)\right] + 2 \sum_{s=0}^{t-r-1} \sum_{h=1}^{t-r-1-s} \bCov\left[Y_n^\shape(s),Y_n^\shape(s+h)\right]. 
 \end{multline*}
 Grouping together the summands according to the value of $h$ and observing that the expressions given by Lemma \ref{lemma:EYn} are independent of $s$ gives \eqref{eq:shapeVar}. The limit in \eqref{eq:shapelimVar} is an immediate consequence, since $\mb K_\shape(h)=0$ for all $h>r+1$, so the sum in \eqref{eq:shapeVar} only contains non-vanishing terms for which $h=O(1)$. 
 
 It remains to verify \eqref{eq:autocov}. For $h=0$, we obtain by \eqref{eq:Ynkshapes1} for $\shape=\shape'$ that
 \begin{align*}
  &\bE\left[Y_n^\shape(s)^2\right]
  = \sum_{k=1}^n \bE\left[Y_{n,k}^\shape(s)\right] + 2 \sum_{k=1}^n \sum_{k'=1}^{k-1} \bE\left[Y_{n,k}^\shape(s) Y_{n,k'}^\shape(s)\right]\\
  &\qquad= \mu_n^\shape + 2 \frac{\prod_{i\geq 1} (n)_{4a_i}}{\prod_{j\geq 0} \fdf{2n-1}_{2b_j}} \sum_{k=1}^n (k-2a_0-1)_{2a_0-1} \sum_{k'=1}^{k-1} (k'-1)_{2a_0-1}.
 \end{align*}
 By \eqref{eq:disccalc}, the rightmost sum evaluates to $(2a_0)^{-1}(k-1)_{2a_0}$, which in turn simplifies the expression to 
 \begin{align*}
  \bE\left[Y_n^\shape(s)^2\right]
  &= \mu_n^\shape + \frac{\prod_{i\geq 1} (n)_{4a_i}}{a_0 \prod_{j\geq 0} \fdf{2n-1}_{2b_j}} \sum_{k=1}^n (k-1)_{4a_0-1}\\ 
  &= \mu_n^\shape + \frac{1}{4a_0^2}\prod_{i\geq0}\frac{(n)_{4a_i}}{\fdf{2n-1}_{2b_i}}
 \end{align*}
 after applying \eqref{eq:disccalc} a second time. The value for $\mb K_\shape(0)$ now follows. 
 
 The case $1\leq h\leq r+1$ is concluded by a similar argument starting from \eqref{eq:Ynkshapes2} with $a_i=a'_i$: After evaluating the sums over $k$ and $k'$, one arrives at 
 \[
  \bE\left[Y_n^\shape(s)Y_n^\shape(s+h)\right]
  = \frac{(n)_{2a_0}^2}{4a_0^2} \cdot \frac{(n-2a_0)_{2a_h}\prod_{h\neq i\geq 1} (n)_{2a_i+2a_{i-h}}}{\prod_{j\geq 0} \fdf{2n-1}_{b_j+b_{j-h}} }
 \]
 and \eqref{eq:autocov} then follows from simplifying the right-hand side using $a_{-h}=0$ and $(n)_{2a_0}(n-2a_0)_{2a_h}=(n)_{2a_h+2a_0}$. 
 
 Finally, if $h>r+1$ then the edges for the two components starting at levels $s$ and $s+h$ come from disjoint sets of layers, and hence are independent from one another. Thus, the covariance is $0$. 
\end{proof}

We now have all the tools to prove Theorem \ref{theorem:shape}.

\begin{proof}[Proof of Theorem~\ref{theorem:shape}]
 From \eqref{eq:Ynshape} we get for a strong shape $\shape$ with stretch $r\geq 0$, as $t\to\infty$, 
 \begin{equation}\label{eq:shapeLLN1}
  \frac{\bE\left[C_n^\shape(t)\right]}{t} = \frac{1}{t}\sum_{s=0}^{t-r-1} \bE\left[Y_n^\shape(s)\right]
  = \frac{1}{t}\sum_{s=0}^{t-r-1} \mu_n^\shape \to \mu_n^\shape.
 \end{equation}
 We thus established convergence in expectation for \eqref{eq:sshapeLLN}. 

 Almost sure convergence can be shown via the same renewal-theoretic argument as in the proof of Theorem~\ref{theorem:Soo}. Indeed, observe that if $K$ is a closed loop with stretch $r$ starting in level $s$, then none of the layers $s+1,\dots,s+r$ can be resets. In other words, closed loops are confined to the renewal intervals of the sling process. It follows that $\left\{C_n^\shape(t)\right\}_{t\geq0}$ is a process with regenerative increments over $T_j$, so almost sure convergence in \eqref{eq:sshapeLLN} is again obtained by invoking \cite[Theorem 2.54]{Serfozo09} and the central limit theorem follows from Theorem~\ref{theorem:regCLT} (i.e. \cite[Theorem~2.65]{Serfozo09}). As in the proof of Theorem~\ref{theorem:Soo}, applying Lemma~\ref{lemma:SooUI} yields the convergence of  moments, and, together with \eqref{eq:shapelimVar}, that 
 \begin{equation}\label{eq:sigmashape}
  \sigma_{n,\shape}^2 = \mb K_\shape(0) + 2 \sum_{h\geq 1} \mb K_\shape(h).
 \end{equation}

 The statements concerning weak shapes are obtained entirely analogously upon replacing in the Lemmas~\ref{lemma:EYn} and \ref{lemma:shapeVar} quantities decorated with $\shape$ by their analogues decorated with $\mb a$, such as $C_n^\shape(t)$ by $C_n^{\mb a}(t)$, or $Y_{n,k}^\shape(s)$ by $Y_{n,k}^{\mb a}(s)$, and so on. The additional factor of $\gamma_n^{\mb a}$ in \eqref{eq:wshapeLLN}, which accounts for the number of strong shapes with given weak shape $\mb a$, stems from Lemma~\ref{lemma:gammana}. For the variance in \eqref{eq:wshapeCLT}, we note that 
 \begin{equation}\label{eq:VarCant}
  \bVar\left[C_n^{\mb a}(t)\right] = \gamma_n^{\mb a} \bVar\left[C_n^\shape(t)\right] + (\gamma_n^{\mb a})_2\bCov\left[C_n^\shape(t),C_n^{\shape'}(t)\right]
 \end{equation}
 for any two $\shape\neq \shape'$ with weak shape $\mb a$, using Lemma~\ref{lemma:gammana} and the fact that the expressions in Lemmas~\ref{lemma:EYn} and \ref{lemma:shapeVar} only depend on the weak shape. Moreover, we also have for $\shape, \shape'$ two strong shapes with weak shape $\mb a$:
 \[
  \bE\left[Y_{n,k}^\shape(s)Y_{n,k'}^{\shape'}(s+h)\right] = \bE\left[Y_{n,k}^\shape(s)Y_{n,k'}^\shape(s+h)\right]
 \]
 except when $\shape\neq\shape'$, $k=k'$, and $h=0$ (since two components of different strong shapes cannot start at the same vertex). Hence for $\shape\neq\shape'$ we have $\bCov\left[Y_n^\shape(s),Y_n^{\shape'}(s+h)\right]=\mb K_{\shape}(h)$ for $h>0$ and $\bCov\left[Y_n^\shape(s),Y_n^{\shape'}(s)\right]=\mb K_{\shape}(0)-\mu_n^\shape$ for $h=0$. Continuing the computation from \eqref{eq:VarCant}, we thus obtain
 \begin{multline*}
  \bVar\left[C_n^{\mb a}(t)\right] = (\gamma_n^{\mb a})^2\left((t-r)\mb K_\shape(0) + 2 \sum_{h=1}^{t-r-1} (t-r-h) \mb K_\shape(h)\right)\\
  - (\gamma_n^{\mb a})_2 (t-r)\mu_n^\shape
 \end{multline*}
 which implies 
 \[
  \bVar\left[\frac{C_n^{\mb a}(t)}{\sqrt{t}}\right] \to (\gamma_n^{\mb a})^2\left(\mb K_\shape(0) + 2 \sum_{h\geq1} \mb K_\shape(h)\right) - (\gamma_n^{\mb a})_2 \mu_n^\shape
 \]
 as $t\to\infty$, in accordance with \eqref{eq:wshapeCLT}.
\end{proof}

\subsection*{Acknowledgements} The authors would like to thank Svante Janson and Stephan Wagner for helpful discussions, as well as Jan Steinebrunner for his advice on the introduction.

\bibliographystyle{alphaurl}
\bibliography{Brauer}

\end{document}